\newtheorem{theorem}{Theorem}[section]
\newtheorem{lemma}[theorem]{Lemma}
\newtheorem{assumption}[theorem]{Assumption}
\theoremstyle{definition}
\theoremstyle{remark}
\title[Asymptotics of hydroelastic waves]{Asymptotics of two-dimensional hydroelastic waves: the zero mass, zero bending limit}
\author{Shunlian Liu}
\address{School of Science, Hunan University of Technology Zhuzhou, Hunan 412007, China}
\email{06yslsl@163.com}
\author{David M. Ambrose}
\address{Department of Mathematics, Drexel University, Philadelphia, PA 19104 USA}
\email{dma68@drexel.edu}
\begin{document}

\begin{abstract}
We consider two-dimensional hydroelastic waves, in which a free fluid surface separates two fluids of infinite vertical extent.
Elastic effects are accounted for at the interface, with a parameter measuring the elastic bending force and another parameter
measuring the mass of the elastic sheet.  In prior work, the authors have demonstrated well-posedness of this initial value
problem in Sobolev spaces.  We now take the limit as these two parameters vanish.  Since the size of the time interval
of existence given by this prior theory vanishes as the mass and bending parameters go to zero,  we now establish
estimates which are uniform with respect to these parameters.  We may then make an additional estimate which demonstrates
that the solutions form a Cauchy sequence as the parameters go to zero, so that the limit may be taken.  This demonstrates that
the vortex sheet with surface tension is the zero mass, zero bending limit of hydroelastic waves in two spatial dimensions.

\end{abstract}

\maketitle

\section{Introduction}

We study a singular limit of an interfacial fluid flow problem, accounting for hydroelastic and capillary effects at the boundary
between two two-dimensional, irrotational, incompressible, inviscid fluids.  As such, these two fluids each satisfy the incompressible
Euler equations in the interior of the fluid region.  The fluids are of infinite vertical extent, and we consider periodic boundary conditions
in the horizontal direction.  The hydroelastic bending term is the highest-order effect at the free surface, and we study the asymptotic
limit in which this coefficient goes to zero.  We simultaneously send the mass of the sheet to zero.  We show that in this limit, keeping
the positive coefficient of surface tension fixed, the solutions of the hydroelastic problem approach the solution of the vortex sheet with
surface tension.

The hydroelastic fluid problem models fluid flow with a free surface in cases in which a relatively thin elastic solid is present, such
as ice sheets on the ocean or the motion of a flapping flag surrounded by air.  This was modeled by Plotnikov and Toland using the
special Cosserat theory of elastic shells \cite{plotnikovToland}.  A number of analytical studies of existence of solitary and periodic
traveling waves were then performed, including cases with and without accounting for the mass of the sheet, and in two and three
spatial dimensions \cite{groves1}, \cite{ZAMP}, \cite{EJAM2}, \cite{baldiToland}, \cite{groves2}, \cite{toland-heavy}, \cite{toland}.
The second author and Siegel proved well-posedness of the initial value problem in the two-dimensional
case without mass \cite{ambroseSiegel-hydroelastic},
and the authors proved well-posedness for the case with mass \cite{liu2017well}.  Wang and Yang proved well-posedness
of the initial value problem in three spatial dimensions \cite{wangYang}.
Many numerical studies of
the traveling hydroelastic wave problem have also been made, such as \cite{gaoEtAl1}, \cite{gaoEtAl2}, \cite{guyenneParau},
\cite{milewskiEtAl}, \cite{milewskiWang}, \cite{parau}, \cite{wang2}, \cite{wang1}.

While the authors are unaware of other studies of the zero bending limit of interfacial waves, there are prior studies on the
 zero surface tension limit of problems in free-surface fluid dynamics.  The second author
and Masmoudi showed in \cite{ambroseMasmoudi1}, \cite{ambroseMasmoudi3} that the zero surface tension limit of
water waves may be taken, in two and three space dimensions, respectively.
Agrawal took the zero surface tension limit of water waves with singular data (i.e. in the presence of angled crests) as well
\cite{agrawal}.
The authors have taken the zero
surface tension limit of interfacial Darcy flow in two and three space dimensions \cite{ambroseDarcy3}, \cite{liuAmbrose3}.
The zero surface tension limit of interfacial Darcy flow was also considered by Flynn and Nguyen, at low regularity, in
\cite{flynnNguyen}.  Coutand, Hole, and Shkoller studied the zero surface tension limit for free-boundary problems for the
compressible Euler equations \cite{shkoller1}, and
Had\v{z}i\'{c} and Shkoller have taken the zero surface tension limit for the Stefan problem \cite{shkoller2}.
It is not always the case, however, that such limits may be taken.  In a series of papers it has been shown that there are situations
for Darcy/Hele-Shaw flow in which the zero surface tension limit is not the same as the solution without surface tension.
In particular, so-called daughter singularities arise \cite{houDaughter}, \cite{houDaughter2}, \cite{tanveerDaughter},
\cite{siegelDaughter}.  Thus it can be important to rigorously establish such limits when possible, to remove any doubt
as to the regularity of the solutions with respect to the parameters.

The method used for analysis of the present problem is rooted in the numerical work of Hou, Lowengrub, and Shelley (HLS) for the
efficient computation of vortex sheets with surface tension \cite{HLS1}, \cite{HLS2}.  The key features of the HLS formulation are
(i) that geometric dependent variables related to the physics are evolved rather than the Cartesian coordinates of the interface,
(ii) a normalized arclength parameterization is maintained at all times by careful choice of an artificial tangential velocity, and
(iii) singular integrals such as the Birkhoff-Rott integral are approximated by simpler operators such as the Hilbert transform (as was
also done in, for instance, \cite{BHLConvergence}, \cite{BHL}).  These choices have also proven to be well-suited for
rigorous analysis, as has been demonstrated in the series of works \cite{ambroseRealThesis}, \cite{ambroseThesis}, \cite{ambroseDarcy1}, \cite{ambroseDarcy3},
\cite{ambroseMasmoudi1}, \cite{ambroseSiegel-hydroelastic},
\cite{liu2017well}, as well as works by other authors such as \cite{hurEtAl}, \cite{cordobaWave}, \cite{cordobaMuskat}, \cite{duell}, \cite{guoEtAl},  \cite{tofts}
among others.

The plan of the paper is as follows.    In Section \ref{equationsSection}, we develop the equations of motion for the interfacial hydroelastic
wave problem in the HLS framework.  In Section \ref{preliminarySection} we give the estimates on approximation of the Birkhoff-Rott
integral by the Hilbert transform, and related results for other operators.  In Section \ref{uniformSection} we demonstrate energy estimates
in Sobolev spaces for the interfacial hydroelastic problem which are uniform with respect to the bending coefficient and mass density of
the sheet; this implies existence of solutions on an interval of time which is uniform with respect to these parameters.  In Section
\ref{limitSection}, we then take the limit as these parameters vanish, recovering solutions of the vortex sheet with surface tension.

\section{Equations of motion }\label{equationsSection}
The model of the elastic sheet we consider accounts for mass of the interface; we let $\rho_{0}$ be the corresponding
mass density of the sheet.  The upper and lower fluid of course each have their own densities, which we denote as $\rho_{1}$
and $\rho_{2}$ for the lower and upper fluids, respectively.
We denote the position of the free surface (which we note is one-dimensional) by
$\bm{X}(\alpha,t)=(x(\alpha,t),y(\alpha,t)).$  Here we have taken
 $\alpha$ to be the parameter along the curve and as usual we have let $t$ be the temporal variable.
 We take this surface to be horizontally periodic with period $2\pi,$
meaning that the surface satisfies
\begin{align}\nonumber
x(\alpha+2\pi,t)=x(\alpha,t)+2\pi, \quad y(\alpha+2\pi,t)=y(\alpha,t),
\end{align}
for all $\alpha$ and $t.$
We take a frame of unit tangent and normal vectors at each point along the surface, denoted $\bm{t},\bm{n},$ and we
let $s$ represent arclength of the curve as measured from a given point.
We have the following definitions for the vectors $\bm{t}$ and $\bm{n},$ and for the arclength element $s_\alpha:$
\begin{equation}\nonumber
\bm{t}=\frac{(x_\alpha,y_\alpha)}{s_\alpha},\qquad \bm{n}=\frac{(-y_\alpha,x_\alpha)}{s_\alpha},
\end{equation}
\begin{equation}\nonumber
s_\alpha^2=x_\alpha^2+y_\alpha^2.
\end{equation}
We take the surface to evolve according to a normal velocity, $U,$ and a tangential velocity, $V,$ i.e.
\begin{align}\label{Xt}
\bm{X}_t=U \bm{n}+V\bm{t}.
\end{align}

Rather than evolve the Cartesian coordinates $(x,y)$ of the interface, inspired by the numerical works \cite{HLS1}, \cite{HLS2},
and subsequent analytical works such as \cite{ambroseRealThesis}, we will work with geometric dependent variables.
We have already defined the arclength element, $s_{\alpha},$ and now we introduce the tangent angle the free surface forms with
the horizontal,  $\theta=\tan^{-1}(y_\alpha/x_\alpha).$
The curvature of the surface $\bm{X},$ which we denote by $\kappa,$ has a simple expression in terms of $\theta$ and
$s_{\alpha},$
\begin{align}\nonumber
\kappa=\theta_\alpha/s_\alpha.
\end{align}
In terms of $\theta,$ the tangent and normal vectors may be written as
\begin{align}\label{tnDef}
\bm{t}=(\cos\theta,\sin\theta),\qquad \bm{n}=(-\sin\theta,\cos\theta).
\end{align}

While the normal velocity, $U,$ will be determined by the underlying physics, we are free to choose the tangential velocity to
enforce a convenient parameterization.  We choose a normalized arclength parameterization, so that
$s_{\alpha}(\alpha,t)=L(t)/2\pi$ for all $\alpha$ and $t,$ i.e. so that $s_{\alpha}$ does not depend on $\alpha.$  Here, $L(t)$
is the length of one period of the interface at time $t.$
We may use the expression for horizontal periodicity, $x(2\pi,t)-x(0,t)=2\pi,$ and the expressions
\begin{align}\label{xya}
x_\alpha=\frac{L}{2\pi}\cos(\theta),\qquad y_\alpha=\frac{L}{2\pi}\sin(\theta),
\end{align}
to find
\begin{align}\label{LoperatorDefinition}
L=\frac{4\pi^2}{\int_0^{2\pi}\cos(\theta)\ d\alpha}\ge 2\pi.
\end{align}
We may regard \eqref{LoperatorDefinition} as defining $L$ as an operator acting on $\theta,$ i.e. $L=L[\theta].$
Note that $\theta$ must also satisfy
\begin{equation}\label{zeroMeanSin}
\int_{0}^{2\pi}\sin(\theta)\ d\alpha=0,
\end{equation}
as a further consequence of our periodicity assumption.
We denote the mean of a periodic function, $f,$ by
\begin{equation}\nonumber
\llangle f \rrangle = \frac{1}{2\pi}\int_{0}^{2\pi}f(\alpha)\ d\alpha,
\end{equation}
so that our requirement \eqref{zeroMeanSin} may be restated as $\llangle\sin(\theta)\rrangle=0.$

The evolution \eqref{Xt} implies evolution equations for $s_{\alpha}$ and $\theta,$ which are
\begin{align}
s_{\alpha t}= V_\alpha -\theta_\alpha U,\label{sat}\\
\nonumber
\theta_t=\frac{U_\alpha+V\theta_\alpha}{s_\alpha}.
\end{align}

We now determine the tangential velocity which enforces our normalized arclength parameterization;  since our parameterization is
\begin{align}\nonumber
s_{\alpha}=L/2\pi,
\end{align}
then we will need $s_{\alpha t}$ to satisfy
\begin{align}\label{sa}
s_{\alpha t}=L_t/2\pi.
\end{align}
Using \eqref{sa} together with \eqref{sat}, we may solve for $V_{\alpha};$ we find
\begin{equation}\nonumber
V_{\alpha}=\frac{L_{t}}{2\pi}+\theta_{\alpha}U.
\end{equation}
We then see that $L_{t}=2\pi s_{\alpha t},$ and averaging this over our interval $[0,2\pi],$ we find
\begin{align}\label{LtDef}
L_t=\int_0^{2\pi} s_{\alpha t} d\alpha=-\int_0^{2\pi} \theta_\alpha U\ d\alpha.
\end{align}
We then conclude
\begin{align}\nonumber
V_\alpha=-\frac1{2\pi}\int_0^{2\pi} \theta_\alpha Ud\alpha +\theta_\alpha U
=\mathbb{P}(\theta_\alpha U),
\end{align}
where $\mathbb{P}$ is the projection which removes the mean of a periodic function.
We may then give the equation for $V,$
\begin{align}\label{VDef}
V=\partial_\alpha^{-1}\mathbb{P}(\theta_\alpha U)+V_0(t),
\end{align}
where $\partial_{\alpha}^{-1}$ is the mean zero antiderivative of a periodic function with zero mean.
We have introduced $V_{0}(t),$ which is the mean of $V$ at any time, and we will comment on this by the end of Section \ref{br}.

We now address the physical principles which will allow us to deduce the normal velocity, $U.$
We are considering irrotational fluids, but there is singular vorticity present at the free surface.  That is, the two fluids are
each irrotational in the bulk, but the velocity is discontinuous across the free surface, and upon taking the curl of the velocity
one finds the vorticity is proportional to the Dirac mass of the curve separating the two fluids.  Said another way, we take
a vortex sheet formulation of the problem.  The Birkhoff-Rott integral,
$\bm{W}=(W_1,W_2),$ determines the normal velocity of the interface,
\begin{align}\label{UDef}
U=\bm{W}\cdot \bm{n}.
\end{align}
We will give the formula for the Birkhoff-Rott integral,
and investigate approximations for it,  in Section \ref{br} below.
In a Lagrangian formulation of the problem (rather than our formulation in which we use an artificial tangential velocity), the
tangential velocity of the interface (at least in the density-matched case) would be $\bm{W}\cdot\bm{t};$ the difference between the two tangential velocities is
an important quantity, so we denote
\begin{align}\label{VwDef}
V_W=V-\bm{W}\cdot\bm{t}.
\end{align}
Having introduced this notation, we may rewrite $\theta_{t}$ as
\begin{equation}\label{theta_t}
\theta_t=\frac{1}{s_\alpha}(\bm{W}_{\alpha}\cdot \bm{n}-V_W\theta_\alpha).
\end{equation}
Here, we have used the geometric identity
$\bm{n}_{\alpha}=-\theta_{\alpha}\bm{t}.$

As we have said, there is a jump in tangential velocity across the interface between the two fluids, and we denote this jump
by $\gamma.$  Then $\gamma$ is also the vortex sheet strength, i.e. when we have said that the vorticity is proportional to the
Dirac mass of the curve, this amplitude is also given by $\gamma.$
An evolution equation for $\gamma$ may be inferred from the irrotational, incompressible Euler equations. Specifically
the potential in each fluid satisfies a Bernoulli equation, and we may take the difference of the
limit of these approaching the interface to find
the equation for the jump in velocity potential.  Then, differentiating the result, one finds the equation for $\gamma,$
\begin{equation}\label{gamma}
\gamma_t=-\frac2{\rho_1+\rho_2}[P]_\alpha +\frac{(V_W\gamma)_\alpha}{s_\alpha}
-2A\left(s_\alpha \bm{W}_t\cdot \bm{t}+\frac18\partial_\alpha\left(\frac{\gamma^2}{s_\alpha^2}\right)
-V_W\bm{W}_\alpha\cdot \bm{t}+g y_\alpha\right),
\end{equation}
where $[P]=p_1-p_2$ is the jump in pressure across the interface, $g$ is the constant
acceleration due to gravity, and the Atwood number is $A=\frac{\rho_1-\rho_2}{\rho_1+\rho_2}$.
The equation \eqref{gamma} can be found in several places, such as \cite{ambroseMasmoudi1},
or \cite{ambroseSiegel-hydroelastic}, or \cite{BMO}.
A formula for the jump in pressure is needed to close the model, and we use the expression developed in the model of
Plotnikov and Toland \cite{plotnikovToland}.
The equation for $[P]$ is
\begin{equation}\label{pressure2}
[P]=-\tau \kappa+\rho \partial_{tt}\bm{X}\cdot\bm{n}+\frac12 \sigma\left\{\kappa_{ss} + \frac12\kappa^3\right\}+g\rho \bm{j}\cdot\bm{n}.
\end{equation}
where $\rho=\rho_0/s_\alpha$ is the actual density of the deformed sheet, $\sigma$ is the constant bending modulus,
and $\tau>0$ is the surface tension coefficient.
Using \eqref{pressure2} to substitute for $[P]_\alpha,$ we have
\begin{align*}
\gamma_t&=-\frac{1}{\rho_1+\rho_2}  \sigma \left(\kappa_{ss} + \frac12\kappa^3\right)_\alpha +\frac{2\tau\kappa_\alpha}{\rho_1+\rho_2}+\frac{(V_W\gamma)_\alpha}{s_\alpha}
\\
&-2A\left(s_\alpha \bm{W}_t\cdot \bm{t}+\frac18\partial_\alpha\left(\frac{\gamma^2}{s_\alpha^2}\right)-V_W\bm{W}_\alpha\cdot \bm{t}+g y_\alpha\right)\\
&-\frac{2\rho}{\rho_1+\rho_2} \left(\bm{W}_{\alpha  t}\cdot\bm{n}-\bm{W}_t\cdot\bm{t}\theta_\alpha+( V_W)_\alpha\theta_t
+ V_W \theta_{t\alpha}+g( x_\alpha/s_\alpha)_\alpha\right).
\end{align*}
We now rewrite this in terms of the tangent angle, $\theta,$ and length of one period, $L,$ since we have the relationships
$\kappa=\theta_\alpha/s_\alpha,$ $\partial_s=\partial_\alpha/s_\alpha,$ and $s_\alpha=\frac{L}{2\pi}.$
These considerations yield the following equation for $\gamma_{t}:$
\begin{multline}\label{gammaNew}
\gamma_t=-\sigma\bar{A} \left(\partial_\alpha^4\theta +\frac{3\theta_\alpha^2\theta_{\alpha\alpha}}{2} \right)+\lambda\theta_{\alpha\alpha}+\frac{2\pi(V_W\gamma)_\alpha}{L}
\\
-2A\left(s_\alpha \bm{W}_t\cdot \bm{t}+\left(\frac{\pi^2}{L^2}\right)\gamma\gamma_\alpha-V_W\bm{W}_\alpha\cdot \bm{t}+g y_\alpha\right)\\
-2\widetilde{A}\rho\left(\bm{W}_{\alpha  t}\cdot\bm{n}-\bm{W}_t\cdot\bm{t}\theta_\alpha+ (V_W)_\alpha\theta_t
+ V_W \theta_{t\alpha}-\frac{ 2\pi g x_{\alpha\alpha}}{L}\right).
\end{multline}
Here, we have introduced a few groupings of constants, namely
$\widetilde{A}=\frac{1}{\rho_1+\rho_2},$ $\bar{A}=\frac{8\pi^3 }{L^3(\rho_1+\rho_2)},$ and
$\lambda=\frac{4\tau\pi}{L(\rho_1+\rho_2)}.$  Notice that $A$ and $\widetilde{A}$ are independent of both $t$ and $\alpha,$
while $\bar{A}$ and $\lambda$ are time-dependent through their dependence on $L.$

If $\rho=0$  and $\sigma=0$, then the  evolution of $\gamma$ would be
\begin{equation}\nonumber
\begin{array}{l}
\gamma_t=\lambda\theta_{\alpha\alpha}+\frac{2\pi(V_W\gamma)_\alpha}{L}-2A\left(s_\alpha \bm{W}_t\cdot \bm{t}
+\left(\frac{\pi^2}{L^2}\right)\gamma\gamma_\alpha-V_W\bm{W}_\alpha\cdot \bm{t}+g y_\alpha\right).
\end{array}
\end{equation}
This is the equation for $\gamma$ for the vortex sheet with surface tension \cite{ambroseThesis}, \cite{BMO}.

We will be considering the behavior of solutions as $(\rho_0,\sigma)\rightarrow (0,0)$. We will demonstrate that as $(\rho_0,\sigma)$ vanishes, the sequence of solutions corresponding to $(\rho_0,\sigma)$ forms a Cauchy sequence in an appropriately chosen
function space.  These solutions will then be shown to converge to the solution of the vortex sheet with surface tension initial value
problem.

\subsection{The Birkhoff-Rott integral, related operators, and consequences}\label{br}
We have said that the normal velocity of the interface is given by the normal component of the Birkhoff-Rott integral,
$\bm{W}.$  In this section we will give formulas for the Birkhoff-Rott integral, decomposing it to isolate its most singular part.
This will require the introduction of a few operators related to Hilbert transforms.

The introduction of complex notation simplifies the presentation of the Birkhoff-Rott integral, so we
define the complexification map $\mathcal{C}: \mathds{R}^2\rightarrow \mathds{C}$ to be
\begin{align}\nonumber
\mathcal{C}(x,y)=x+iy.
\end{align}
We will frequently denote $z=\mathcal{C}(x,y)=x+iy.$  Given two real two-vectors $(a_{1},b_{1})$ and $(a_{2},b_{2}),$
the dot product may be expressed through the complexification map as
\begin{equation}\nonumber
(a_{1},b_{1})\cdot(a_{2},b_{2})=\mathrm{Re}\left\{\mathcal{C}(a_{1},b_{1})^{*}\mathcal{C}(a_{2},b_{2})\right\},
\end{equation}
where the star denotes complex conjugation.

Using the complexification, the Birkhoff-Rott integral $\bm{W}$ is given by
 \begin{align}\nonumber
\mathcal{C}(\bm{W})^*=W_1-iW_2=\frac1{4\pi i} PV \int_0^{2\pi} \gamma(\alpha^\prime)\cot\left(\frac{1}{2}(z(\alpha)-z(\alpha^\prime))\right)\ d\alpha^\prime.
\end{align}
We also have the following expressions using the complexification:
\begin{align}\nonumber
&\mathcal{C}(\bm{t})=z_\alpha/s_\alpha,\qquad
\mathcal{C}(\bm{n})=iz_\alpha/s_\alpha,\qquad
\bm{W}\cdot \bm{n}=\mathrm{Re}\{\mathcal{C}(\bm{W})^*\mathcal{C}(\bm{n})\}.
\end{align}

Properties of the periodic Hilbert transform \cite{helson} will be important in what follows, so we now introduce it.
For any periodic function $f\in L^{2},$ we may define its periodic Hilbert transform, $Hf,$  through the singular integral
\begin{align}\nonumber
Hf(\alpha)=\frac1{2\pi} PV\int_0^{2\pi} f(\alpha^\prime)\mathrm{cot}\left(\frac12(\alpha-\alpha^\prime)\right)\ d\alpha^\prime.
\end{align}
The Hilbert transform is a multiplier on Fourier space, with symbol $-i\mathrm{sgn}(k),$ where $k$ is the Fourier variable.

Notice that both the Hilbert transform and the Birkhoff-Rott integral are singular integrals, with the same order of singularity.
We will approximate the Birkhoff-Rott integral with a Hilbert transform, and we introduce an operator to be the difference of these.
Later, we will see that the error (given by this operator) is smooth.  We define
\begin{multline}\nonumber
K[z_d]f(\alpha)
\\=\frac1{4\pi i}\int_0^{2\pi} f(\alpha^\prime)
\left[\mathrm{cot}\left(\frac{1}{2}(z_d(\alpha)-z_d(\alpha^\prime))\right)
-\frac{1}{z_\alpha(\alpha^\prime)}\cot\left(\frac12(\alpha-\alpha^\prime)\right)\right]\ d\alpha^\prime.
\end{multline}
We have introduced here a slight modification of the curve $z,$ as this will be convenient,
\begin{align}\nonumber
z_d(\alpha,t)=z(\alpha,t)-z(0,t);
\end{align}
this is convenient because when integrating functions of $\theta$ to find the curve $z,$ there is a question of the constant of integration.  As the constant of integration has no bearing on the value of $\bm{W},$ though, we use $z_{d}$ which eliminates
the constant.
Specifically, $z_d(\alpha,t)-z_d(\alpha^\prime,t)=z(\alpha,t)-z(\alpha^\prime,t)$ and $\partial_\alpha z_d=\partial_\alpha z.$
Furthermore, we may view $z_{d}$ as being defined through
\begin{align}\label{zdd}
z_d(\alpha,t)=\frac{L}{2\pi}\int_0^\alpha\cos(\theta)+i\sin(\theta)d\alpha.
\end{align}

We will frequently need the commutator of the Hilbert transform and
multiplication by a smooth function $\phi$,
\[
[H,\phi]f(\alpha)=H(\phi f)(\alpha)-\phi(\alpha)H(f)(\alpha).
\]
This commutator $[H,\phi]$ is a smoothing operator, and we will give results to this effect in Section
\ref{smoothingLemmasSection} below.

We will be rewriting the $\theta_{t}$ equation \eqref{theta_t} to bring out the leading behaviour, and so will
will need a helpful expression for $\bf{W}_{\alpha}.$  This will make use of the operators we have introduced just above.
We repeat the argument of \cite{ambroseThesis} on this point, which is to differentiate $\bm{W}$ with respect to $\alpha,$
to recognize a convolution-like structure of the kernel in the integral, and then to integrate by parts.  Then decomposing
in terms of our operators, we have
\begin{multline}\nonumber
\mathcal{C}(\bm{W})^*_\alpha
=\frac1{4\pi i} PV \int_0^{2\pi} z_\alpha(\alpha)\partial_{\alpha^\prime}
\left(\frac{\gamma(\alpha^\prime)}{z_\alpha(\alpha^\prime)}\right)\mathrm{cot}
\left(\frac{1}{2}(z(\alpha)-z(\alpha^\prime))\right)\ d\alpha^\prime\\
=z_\alpha K[z_d]\left(\left(\frac{\gamma}{z_\alpha}\right)_\alpha\right)
+\frac{z_\alpha}{2i}\left[H,\frac{1}{z_\alpha^2}\right]
\left(z_\alpha\left(\frac{\gamma}{z_\alpha}\right)_\alpha\right)
+\frac{1}{2 i z_\alpha}H\left(\gamma_\alpha-\gamma\frac{z_{\alpha\alpha}}{z_\alpha} \right).
\end{multline}
Introducing the notation $\bm{m}$ for the following collection of terms,
\begin{align}\nonumber
\mathcal{C}(\bm{m})^*=z_\alpha K[z_d]\left(\left(\frac{\gamma}{z_\alpha}\right)_\alpha\right)
+\frac{z_\alpha}{2i}\left[H,\frac{1}{z_\alpha^2}\right]
\left(z_\alpha\left(\frac{\gamma}{z_\alpha}\right)_\alpha\right),
\end{align}
our decomposition of $\bm{W}_{\alpha}$ is
\begin{align}\label{w_alpha}
\bm{W}_\alpha=\frac{\pi}{L}H(\gamma_\alpha) \bm{n}-\frac{\pi}{L}H(\gamma \theta_\alpha)\bm{t}+\bm{m}.
\end{align}
Note that we have used here that $z_{\alpha}=s_{\alpha}e^{i\theta},$ which implies $z_{\alpha\alpha}=i\theta_{\alpha}z_{\alpha}.$
Using \eqref{w_alpha} in \eqref{theta_t}, the $\theta_{t}$ equation becomes
\begin{equation}\label{evolution of theta}
\theta_t=\frac{2\pi^2}{L^2}H(\gamma_\alpha)+\frac{2\pi}{L}V_W\theta_\alpha+\frac{2\pi}{L}\bm{m}\cdot \bm{n}.
\end{equation}

We may further improve this expression for $\theta_{t}$ by again using \eqref{w_alpha} to expand $V_{W}.$
To begin, we recall the definition of $V_{W}$ in \eqref{VwDef}, and we differentiate, finding
 \[
 \partial_\alpha V_W= V_\alpha-\bm{W}_\alpha\cdot\bm{ t}-(\bm{W}\cdot\bm{n})\theta_\alpha.
 \]
 Using the equations $V_\alpha=\frac{L_t}{2\pi}+\theta_\alpha U$ and $U=\bm{W}\cdot\bm{n}$, this becomes
 \[
 \partial_\alpha V_W=\frac{\pi}{L}H(\gamma\theta_\alpha)-\bm{m}\cdot\bm{t}+\frac{L_t}{2\pi}.
 \]
We may simplify this by applying the projection $\mathbb{P}$ which removes the mean of a periodic function; since
the image of each of $\partial_{\alpha}$ and $H$ has zero mean, and since $\mathbb{P}$ applied to a constant function is zero,
we have
\[
\partial_\alpha V_W =\frac{\pi}{L}H(\gamma\theta_\alpha)-\mathds{P}(\bm{m}\cdot\bm{t}).
\]
We next integrate, applying the zero-mean antiderivative operator $\partial_{\alpha}^{-1},$
\begin{equation}\label{V_Wep}
V_W =\partial_\alpha^{-1}\left(\frac{\pi}{L}H(\gamma\theta_\alpha)-\mathds{P}(\bm{m}\cdot\bm{t})\right).
\end{equation}
We mentioned above that we would set the mean of $V$ to be $V_{0}(t),$ and this has now been set.  We have chosen
here that $V_{W}$ should have zero mean, and this means that we have chosen that at each time the mean of $V$ is to be
the same as the mean of $\bm{W}\cdot\bm{t}.$
That is to say, implicit in \eqref{V_Wep} is the choice
\begin{equation}\label{V0Def}
V_0(t)=\frac{1}{2\pi}\int_0^{2\pi}\bm{W}\cdot\bm{t}\ d\alpha.
\end{equation}

\subsection{Calculation of $(\bm{W}_{ t}\cdot \bm{t})s_\alpha$ and  $\bm{W}_{\alpha t}\cdot \bm{n}$}
The quantities $(\bm{W}_{t}\cdot\bm{t})$ and $\bm{W}_{\alpha t}\cdot\bm{n}$ appear on the right-hand side of the
$\gamma_{t}$ equation, so we must find useful expressions for them.
To this end, we write $\bm{W}_t$ as
\begin{equation}\nonumber
\begin{array}{l}
\mathcal{C}(\bm{W}_t)^*=\frac1{4\pi i} PV \int_0^{2\pi} \gamma_t(\alpha^\prime)
\mathrm{cot}\left(\frac{1}{2}(z(\alpha)-z(\alpha^\prime))\right)
\ d\alpha^\prime
\\
-\frac1{8\pi i} PV \int_0^{2\pi} \gamma(\alpha^\prime)(z_t(\alpha)-z_t(\alpha^\prime))
\csc^2\left(\frac{1}{2}(z(\alpha)-z(\alpha^\prime))\right)\ d\alpha^\prime.
\end{array}
\end{equation}
This allows us to form the combination $(\bm{W}_{ t}\cdot \bm{t})s_\alpha,$ which is
\begin{multline}\nonumber
(\bm{W}_{ t}\cdot \bm{t})s_\alpha=\mathrm{Re}\{\mathcal{C}(\bm{W}_t)^*z_\alpha\}=
\\
\mathrm{Re}\left\{ \frac{z_\alpha}{4\pi i} PV \int_0^{2\pi} \gamma_t(\alpha^\prime)\cot\left(\frac{1}{2}(z(\alpha)-z(\alpha^\prime))\right)
\ d\alpha^\prime \right\}
\\
-\mathrm{Re}\left\{\frac{z_\alpha}{8\pi i} PV \int_0^{2\pi} \gamma(\alpha^\prime)(z_t(\alpha)
-z_t(\alpha^\prime))\csc^2\left(\frac{1}{2}(z(\alpha)-z(\alpha^\prime))\right)\ d\alpha^\prime\right\}.
\end{multline}
We introduce an integral operator $J[z_d]\gamma_t$ to be the first term on the right-hand side, which
has the formula
\begin{align}\label{JzDef}
J[z_d]\gamma_t=\mathrm{Re}\left\{z_\alpha K[z_d]\gamma_t+\frac{z_\alpha}{2i}
\left[H,\frac{1}{z_\alpha}\right]\gamma_t\right\}.
\end{align}
We denote by $R_{0}$ the
second term on the right-hand side.  We then integrate $R_{0}$ by parts, resulting in two terms, which we call $R_{1}$ and $R_{2}.$
The formulas for $R_{1}$ and $R_{2}$ are
\begin{multline}\nonumber
R_1=\mathrm{Re}\left\{z_\alpha z_t K[z_d]\left(\left(\frac{\gamma}{z_\alpha}\right)_{\alpha}\right)\right\}
-\mathrm{Re}\left\{z_\alpha K[z_d]\left(z_t\left(\frac{\gamma}{z_\alpha}\right)_\alpha\right)\right\}
\\
-\mathrm{Re}\left\{\frac{z_\alpha}{2i}[H,z_t]\left(\frac{1}{z_\alpha}\left(\frac{\gamma}{z_\alpha}\right)_\alpha\right)\right\},
\end{multline}
\begin{equation}\nonumber
R_{2}=-\mathrm{Re}\left\{z_\alpha K[z_d]\left(\frac{\gamma z_{t\alpha}}{z_\alpha}\right)\right\}
-\mathrm{Re}\left\{\frac{z_\alpha}{2i}\left[H,\frac{1}{z_\alpha}\right]\left(\frac{\gamma z_{t\alpha}}{z_\alpha}\right)\right\}
-\frac{1}{2}H\left(\gamma \theta_t\right).
\end{equation}
We conclude with the expression
\begin{align}\nonumber
(\bm{W}_{ t}\cdot \bm{t})s_\alpha=J[z_d]\gamma_t+R_1+R_2.
\end{align}

We also need a decomposition for $\bm{W}_{\alpha t}\cdot \bm{n}.$  We leave out the intermediate steps, but we arrive
similarly at
\begin{align}\nonumber
\bm{W}_{\alpha t}\cdot\bm{n}=\frac{\pi}{L}H(\gamma_{\alpha t})+S[z_d](\gamma_t)+R_3+R_4,
\end{align}
where
\begin{equation}\label{Sdef}
S[z_d](\gamma_t)=\mathrm{Re}\left\{\frac{i z_\alpha^2}{s_\alpha}K[z_d]\left(\left(\frac{\gamma_{ t}}{z_\alpha}
\right)_\alpha\right)\right\}
+\mathrm{Re}\left\{\frac{z_\alpha^2}{2s_\alpha}
\left[H,\frac{1}{z_\alpha^2}\right]\left(z_\alpha\left(\frac{\gamma_{ t}}{z_\alpha}\right)_\alpha\right)\right\},
\end{equation}
\begin{multline}\nonumber
R_3=\mathrm{Re}\left\{\frac{i z_\alpha^2}{s_\alpha}K[z_d]\left(\left(\frac{-\gamma z_{\alpha t}}{z^2_\alpha}\right)_{\alpha }\right)\right\}
+\mathrm{Re}\left\{\frac{z_\alpha^2}{2s_\alpha}\left[H,\frac{1}{z_\alpha^2}\right]\left(z_\alpha\left(\frac{-\gamma z_{\alpha t}}{z^2_\alpha}\right)_{\alpha }\right)\right\}
\\
+\mathrm{Re}\left\{\frac{-z_\alpha^2}{2s_\alpha}\left[H,\frac{1}{z_\alpha^2}\right]\left(\left(\frac{\gamma}{z_\alpha}\right)_\alpha z_{\alpha t}\right)
-\frac{i z_\alpha^2}{s_\alpha}K[z_d]\left(\left(\frac{\gamma}{z_\alpha}\right)_\alpha \frac{z_{\alpha t}}{z_\alpha}\right)\right\}
\\
+\mathrm{Re}\left\{\frac{iz_\alpha^2 z_t }{s_\alpha} K[z_d]\left(\frac{\left(\gamma/z_\alpha\right)_\alpha}{z_\alpha}\right)_\alpha\right\}
-\mathrm{Re}\left\{\frac{iz_\alpha^2}{s_\alpha} K[z_d]\left(z_t\left(\frac{\left(\gamma/z_\alpha\right)_\alpha}{z_\alpha}\right)_\alpha\right)\right\}
\\
-\mathrm{Re}\left\{\frac{z_\alpha^2}{2s_\alpha}[H,z_t]
\left(\frac{1}{z_\alpha}\left(\frac{\left(\gamma/z_\alpha\right)_\alpha}{z_\alpha}\right)_\alpha\right)\right\},
\end{multline}
\begin{align}\nonumber
R_4=\frac{\pi \theta_t}{L}H(\gamma \theta_\alpha)
-\frac{\pi L_t}{L^2}H(\gamma_\alpha)-\frac{2\pi}{L}H(\gamma \theta_\alpha \theta_t)+ \frac{L_{ t}}{L}\bm{m}\cdot \bm{n}-\theta_t \bm{m}\cdot\bm{t}.
\end{align}

\subsection{The small-scale decomposition}
We will now rewrite the $\theta_{t}$ and $\gamma_{t}$ equations to emphasize the leading-order behavior, as well as to
isolate important contributions from our bending, mass, and surface tension forces (since bending and mass will be going to
zero, leaving surface tension remaining).  Using the expressions we have developed so far, the $\gamma_{t}$ equation
\eqref{gammaNew} becomes
\begin{multline}\nonumber
\gamma_t=-\sigma\bar{A} \left(\partial_\alpha^4\theta +\frac{3\theta_\alpha^2\theta_{\alpha\alpha}}{2} \right)+\lambda\theta_{\alpha\alpha}+\frac{2\pi((V_W)_\alpha\gamma)}{L}+\frac{2\pi(V_W\gamma_\alpha)}{L}
\\
-2\rho\widetilde{A}\left(\frac{\pi}{L}H(\gamma_{\alpha t})+S[z_d](\gamma_t)\right)
-\left(2A-4\pi\rho\widetilde{A}\theta_\alpha/L\right) ( J[z_d](\gamma_t))
\\
-2\rho\widetilde{A}\Bigg(-\frac{4\pi^3}{L^2}[H,(H\gamma_\alpha)]\gamma\theta_\alpha
-(\mathds{P}(\bm{m}\cdot\bm{t})+\bm{m}\cdot\bm{t})\frac{2\pi^2}{L^2}H(\gamma_\alpha)-\frac{\pi L_t}{L^2}H(\gamma_\alpha)
\\
+\frac{2\pi^2}{L^2} V_W H(\gamma_{\alpha\alpha})+\frac{2\pi}{L} V_W ^2\theta_{\alpha\alpha}
\Bigg)
\\
+\left(2A-4p\rho\widetilde{A}\theta_\alpha/L\right)
\left(\frac{\pi^2}{L^2}[H,\gamma] H(\gamma_\alpha)-\frac{\pi^2}{L^2}\gamma\gamma_\alpha+H\left(\frac{\pi}{L} V_W \theta_\alpha\gamma
+\frac{\pi}{L}\gamma \bm{m}\cdot \bm{n}\right)\right)
\\
-\left(2A-4\pi\rho\widetilde{A}\theta_\alpha/L\right)
\left(-\mathrm{Re}\left\{z_\alpha K[z_d]\left(\frac{\gamma z_{t\alpha}}{z_\alpha}\right)\right\}
-\mathrm{Re}\left\{\frac{z_\alpha}{2i}\left[H,\frac{1}{z_\alpha}\right]\left(\frac{\gamma z_{t\alpha}}{z_\alpha}\right)\right\}+R_1\right)
\\
-2\rho\widetilde{A}\left(\frac{L_{ t}}{L}\bm{m}\cdot \bm{n}+R_3+R_5
+\frac{ 2\pi g x_{\alpha\alpha}}{L}\right)
\\
-2A\left(\left(\frac{\pi^2}{L^2}\right)\gamma\gamma_\alpha
-V_W\left(-\frac{\pi}{L}H(\gamma\theta_\alpha)+\bm{m}\cdot\bm{t}\right)+g y_\alpha\right).
\end{multline}
The new collection $R_5$ is defined as
\begin{multline}\nonumber
R_5=-\frac{2\pi}{L}\left[H,\left(\frac{2\pi}{L} V_W \theta_\alpha+\frac{2\pi}{L}\bm{m}\cdot \bm{n}\right)\right]\gamma\theta_\alpha+ \frac{2\pi}{L} V_W(V_W)_\alpha\theta_\alpha\\
+\frac{2\pi V_W}{L}(\bm{m}\cdot \bm{n})_\alpha-(\mathds{P}(\bm{m}\cdot\bm{t})
+\bm{m}\cdot\bm{t})\left(\frac{2\pi}{L} V_W \theta_\alpha+\frac{2\pi}{L}\bm{m}\cdot \bm{n}\right).
\end{multline}
We next define collections of terms $\widetilde{R}_{1},$ $\widetilde{R}_{2},$ and $R,$
\begin{multline}\label{TildeR1Def}
\widetilde{R_1}=2\widetilde{A}\Bigg(\frac{2\pi^3\theta_\alpha\gamma\gamma_\alpha}{L^3}
      -\frac{4\pi^3}{L^2}[H,(H\gamma_\alpha)]\gamma\theta_\alpha
\\
-(\mathds{P}(\bm{m}\cdot\bm{t})+\bm{m}\cdot\bm{t})\frac{2\pi^2}{L^2}H(\gamma_\alpha)-\frac{\pi L_t}{L^2}H(\gamma_\alpha)
\Bigg),
\end{multline}
\begin{multline}\nonumber
\widetilde{R_2}=
\frac{-4\pi\widetilde{A}\theta_\alpha}{L}
\left(\frac{\pi^2}{L^2}[H,\gamma] H(\gamma_\alpha)+H\left(\frac{\pi}{L} V_W \theta_\alpha\gamma
+\frac{\pi}{L}\gamma \bm{m}\cdot \bm{n}\right)\right)
\\
+\mathrm{Re}\left\{z_\alpha K[z_d]\left(\frac{\gamma z_{t\alpha}}{z_\alpha}\right)\right\}
+\mathrm{Re}\left\{\frac{z_\alpha}{2i}\left[H,\frac{1}{z_\alpha}\right]\left(\frac{\gamma z_{t\alpha}}{z_\alpha}\right)\right\}-R_1
\\
-2\widetilde{A}\left( \frac{L_{ t}}{L}\bm{m}\cdot \bm{n}+\frac{ 2\pi g x_{\alpha\alpha}}{L}+R_3+R_5\right),
\end{multline}
\begin{multline}\nonumber
R=\frac{2\pi((V_W)_\alpha\gamma)}{L}
+2A
\Bigg(\frac{\pi^2}{L^2}[H,\gamma] H(\gamma_\alpha)+H\left(\frac{\pi}{L}\gamma \bm{m}\cdot \bm{n}\right)
\\
+\mathrm{Re}\left\{z_\alpha K[z_d]\left(\frac{\gamma z_{t\alpha}}{z_\alpha}\right)\right\}
+\frac{\pi}{L}\left[H,V_W\right](\theta_\alpha\gamma)\
\\
+\mathrm{Re}\left\{\frac{z_\alpha}{2i}\left[H,\frac{1}{z_\alpha}\right]\left(\frac{\gamma z_{t\alpha}}{z_\alpha}\right)\right\}-V_W\bm{m}\cdot\bm{t}+g y_\alpha-R_1\Bigg).
\end{multline}
We are finally able to rewrite the  $\gamma_t$ equation, when $\rho>0$ and $\sigma>0,$ as
\begin{multline}\label{gammaend}
\gamma_t=-\sigma\bar{A} \partial_\alpha^4\theta -\frac{3\sigma\bar{A}\theta_\alpha^2}{2}\theta_{\alpha\alpha}+\frac{4\pi\rho\widetilde{A}}{L} V_W ^2\theta_{\alpha\alpha}+\lambda\theta_{\alpha\alpha} +\left(\frac{2\pi V_W}{L}-\frac{4A\pi^2}{L^2}\gamma\right)\gamma_\alpha
\\
-2AJ[z_d]\gamma_t
-\frac{2\rho\widetilde{A}\pi}{L}H(\gamma_{\alpha t})
-\frac{4\rho\widetilde{A}V_W\pi^2}{L^2} H(\gamma_{\alpha\alpha})
\\
+2\rho\left(S[z_d](\gamma_t)
-\frac{2\pi\widetilde{A}\theta_\alpha}{L}J[z_d]\gamma_t\right)
+\rho\widetilde{R}_1+\rho\widetilde{R}_2+R.
\end{multline}
In the case  $\rho=0$ and $\sigma=0,$ this becomes
\begin{equation}\label{gammaend00}
\gamma_t=\lambda\theta_{\alpha\alpha}+\left(\frac{2\pi V_W}{L}-\frac{4A\pi^2}{L^2}\gamma\right)\gamma_\alpha-2AJ[z_d]\gamma_t+R.
\end{equation}

Of course the $\gamma_t$ equation as in \eqref{gammaend} is not simply an evolution equation, as it is an integral
equation for $\gamma_{t}.$  To show that it is solvable, it will be helpful to introduce a brief notation for the
terms which do and do not depend on $\gamma_t$ and $\gamma_{\alpha t}.$  To this end, we introduce the following notation:
\begin{equation}\label{gammaend2}
\gamma_t=-\frac{2\rho\widetilde{A}\pi}{L}H(\gamma_{\alpha t})-T[\theta](\gamma_t)+F,
\end{equation}
where $T[\theta](\gamma_t)$ and $F$ are given by
\begin{equation}
\label{TgammaDef}
T[\theta](\gamma_t)=-2AJ[z_d]\gamma_t+2\rho\left(S[z_d](\gamma_t)-\frac{2\pi\widetilde{A}\theta_\alpha}{L}J[z_d]\gamma_t\right),
\end{equation}
\begin{multline}\label{FDef}
F=\lambda\theta_{\alpha\alpha}-\sigma\bar{A} \partial_\alpha^4\theta -\frac{3\sigma\bar{A}\theta_\alpha^2}{2}\theta_{\alpha\alpha}+\frac{4\pi\rho\widetilde{A}}{L} V_W ^2\theta_{\alpha\alpha}\\ +\left(\frac{2\pi V_W}{L}-\frac{4A\pi^2}{L^2}\gamma\right)\gamma_\alpha
-\frac{4\rho\widetilde{A}V_W\pi^2}{L^2} H(\gamma_{\alpha\alpha})+\rho\widetilde{R}_1+\rho\widetilde{R}_2+R.
\end{multline}
We will address solvability of the $\gamma_{t}$ equation in Section \ref{furtherOperatorSection} below.

\section{Preliminary estimates}\label{preliminarySection}
In what follows, we will need a number of estimates on Sobolev spaces, Hilbert transforms, and other integral operators.
We now state a number of lemmas giving these estimates.

\subsection{Estimates for the Hilbert transform and the Birkhoff-Rott integral} \label{smoothingLemmasSection}
We begin with some standard facts about Sobolev spaces, such as the following basic interpolation result.
The proof can be found in a number of sources, such as \cite{ambroseThesis}.
\begin{lemma} \label{lemma1}
Let $m\ge 0$ and $ s\ge m$ be given.  There exists $C>0$ such that for all $f\in H^s,$
\begin{align}\nonumber
\|f\|_m\le C\|f\|_s^{m/s}\|f\|_0^{1-m/s}.
\end{align}
\end{lemma}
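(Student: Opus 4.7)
The plan is to reduce the estimate to a weighted-$\ell^2$ inequality for Fourier coefficients, since every function under consideration is $2\pi$-periodic. First I would pass to the equivalent Fourier-side expression $\|f\|_s^2 \simeq \sum_{k\in\mathbb{Z}} (1+|k|^2)^s|\hat{f}(k)|^2$, where the equivalence constants depend only on the precise Sobolev-norm convention fixed in the paper. The constant $C$ in the statement is then allowed to absorb these equivalence factors, so I only need to prove the corresponding inequality for the weighted Fourier norms.

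The key step is Hölder's inequality, exploiting the fact that the weight $(1+|k|^2)^m$ is the geometric interpolant between $(1+|k|^2)^s$ and $1$. Writing each summand as
$$(1+|k|^2)^m|\hat{f}(k)|^2 = \bigl[(1+|k|^2)^s|\hat{f}(k)|^2\bigr]^{m/s}\bigl[|\hat{f}(k)|^2\bigr]^{1-m/s},$$
I would apply Hölder with conjugate exponents $s/m$ and $s/(s-m)$ to obtain
$$\sum_{k}(1+|k|^2)^m|\hat{f}(k)|^2 \le \Bigl(\sum_k (1+|k|^2)^s|\hat{f}(k)|^2\Bigr)^{m/s}\Bigl(\sum_k |\hat{f}(k)|^2\Bigr)^{1-m/s}.$$
Taking square roots and translating back through the Fourier equivalence immediately yields the claimed bound $\|f\|_m \le C\|f\|_s^{m/s}\|f\|_0^{1-m/s}$.

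There is essentially no obstacle to this argument; the only things to check are the degenerate cases. When $m=0$ or $m=s$ the inequality reduces to an identity, which handles the situations in which the Hölder exponents $s/m$ or $s/(s-m)$ would become infinite. If $s=0$ the hypothesis forces $m=0$ and again the inequality is trivial. Since this is a classical Sobolev interpolation lemma, the only work beyond this Hölder computation is bookkeeping the equivalence of the Fourier-weighted norm with the ambient $H^m$ norm, and for that I would simply cite the reference \cite{ambroseThesis} indicated in the paper.
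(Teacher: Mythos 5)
Your proof is correct, and since the paper gives no proof of its own (it just cites \cite{ambroseThesis}), the H\"older-on-Fourier-coefficients argument you give is exactly the standard one that reference would supply. The exponent bookkeeping, the degenerate cases $m=0$, $m=s$, and $s=0$, and the absorption of norm-equivalence constants into $C$ are all handled properly, so nothing further is needed.
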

We will also make use of the Sobolev algebra property.
\begin{lemma}\label{lemma2}
For all $s>1/2$, $H^s(\mathds{T})$ is a Banach algebra. That is, there exists $c>0$ such that for all $u,v\in H^s$,
\begin{align}\nonumber
\|uv\|_{s}\le c\|u\|_s\|v\|_s.
\end{align}
\end{lemma}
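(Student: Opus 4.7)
The plan is to work on the Fourier side, since on the torus $\mathds{T}$ the $H^{s}$ norm is equivalent to the weighted $\ell^{2}$ norm $\bigl(\sum_{k}(1+|k|^{2})^{s}|\hat u(k)|^{2}\bigr)^{1/2}$ and the Fourier coefficients of the product are given by the discrete convolution $\widehat{uv}(k)=\sum_{j\in\mathds{Z}}\hat u(k-j)\hat v(j)$. The main tool will be the elementary splitting
\begin{equation*}
(1+|k|^{2})^{s/2}\le C_{s}\bigl((1+|k-j|^{2})^{s/2}+(1+|j|^{2})^{s/2}\bigr),
\end{equation*}
valid for $s\ge 0$ and following from $|k|\le|k-j|+|j|$. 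This is the device that allows the high-frequency weight on $uv$ to be distributed onto one of the two factors.

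After inserting this bound into the convolution and taking the $\ell^{2}_{k}$ norm, the estimate on $\|uv\|_{s}$ reduces to two discrete convolutions of nonnegative sequences. I would then apply the discrete Young inequality $\|f\ast g\|_{\ell^{2}}\le\|f\|_{\ell^{2}}\|g\|_{\ell^{1}}$ to each, placing the $H^{s}$ weight on one factor (producing a quantity comparable to an $H^{s}$ norm) and the $\ell^{1}$ norm of the unweighted Fourier coefficients on the other.

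The final ingredient, and the only place where the hypothesis $s>1/2$ is essential, is the $\ell^{1}$-bound for Fourier coefficients. By Cauchy--Schwarz,
\begin{equation*}
\sum_{k}|\hat w(k)|\le\Bigl(\sum_{k}(1+|k|^{2})^{-s}\Bigr)^{1/2}\Bigl(\sum_{k}(1+|k|^{2})^{s}|\hat w(k)|^{2}\Bigr)^{1/2}\le C_{s}'\,\|w\|_{s},
\end{equation*}
and the first series converges precisely when $s>1/2$. Combining this $\ell^{1}$ embedding with the convolution estimate above yields the claimed inequality $\|uv\|_{s}\le c\|u\|_{s}\|v\|_{s}$ with $c=c(s)$.

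The main obstacle is purely bookkeeping: verifying the weight-splitting inequality cleanly (which is immediate from $|k|\le|k-j|+|j|$ together with the subadditivity $(a+b)^{s}\le 2^{s}(a^{s}+b^{s})$ applied to $a=(1+|k-j|^{2})^{1/2}$ and $b=(1+|j|^{2})^{1/2}$), and carefully pairing each factor with the right norm so that Young's inequality applies. There is no deeper analytic difficulty; the threshold $s>1/2$ is sharp for this argument precisely because that is the threshold at which $H^{s}(\mathds{T})$ embeds into the sequence space $\ell^{1}$ on the Fourier side.
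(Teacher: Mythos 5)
Your argument is the standard and correct Fourier-side proof of the Sobolev algebra property on $\mathds{T}$. The paper itself does not supply a proof of Lemma \ref{lemma2} (it is cited as a standard fact), so there is nothing to compare against; for the record, every step you outline is sound: the weight-splitting inequality follows from $|k|\le|k-j|+|j|$ and $(a+b)^{s}\le 2^{s}(a^{s}+b^{s})$, Young's inequality $\ell^{2}*\ell^{1}\subset\ell^{2}$ handles each of the two resulting convolutions, and the Cauchy--Schwarz embedding $H^{s}(\mathds{T})\hookrightarrow\mathcal{F}\ell^{1}$ is exactly where $s>1/2$ is needed, since $\sum_{k}(1+|k|^{2})^{-s}<\infty$ iff $s>1/2$.
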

We also will need a basic composition estimate, such as can be found in \cite{taylor3}.
\begin{lemma}\label{lemma3}
If $ F$ is a smooth function and $u$ is in $H^k\cap L^\infty$, then $\|F(u)\|_k\le C (1+\|u\|_k)$. The constant C depends on $|F^{(j)}(u)|_{L^\infty}$, for $0\le j\le k$.
\end{lemma}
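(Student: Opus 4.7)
The statement is a classical Moser-type composition estimate, so the natural plan is to induct on $k$, using an explicit Faà di Bruno expansion of $\partial_\alpha^k F(u)$ combined with Gagliardo--Nirenberg interpolation to handle the resulting products of derivatives.

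The base case $k=0$ is immediate on the torus: since $u\in L^\infty$ and $F$ is smooth,
\[
\|F(u)\|_0\le \sqrt{2\pi}\,\|F(u)\|_{L^\infty}\le \sqrt{2\pi}\sup_{|s|\le\|u\|_{L^\infty}}|F(s)|,
\]
which depends only on $\|u\|_{L^\infty}$ and hence on $\|u\|_k$ once $k>1/2$ (otherwise one absorbs $\|u\|_{L^\infty}$ directly into $C$). For the inductive step it suffices, by the equivalence of the full $H^k$ norm with $\|\cdot\|_0+\|\partial_\alpha^k\cdot\|_0$ on $\mathds{T}$, to bound $\|\partial_\alpha^k F(u)\|_0$. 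Faà di Bruno's formula yields
\[
\partial_\alpha^k F(u)=\sum_{\ell=1}^k F^{(\ell)}(u)\!\!\sum_{\substack{j_1+\cdots+j_\ell=k\\ j_i\ge 1}}\!\! c_{j_1,\ldots,j_\ell}\prod_{i=1}^{\ell}\partial_\alpha^{j_i}u.
\]
The factor $F^{(\ell)}(u)$ is controlled in $L^\infty$ by the assumption on derivatives of $F$, so the task reduces to bounding each product $\prod_i\partial_\alpha^{j_i}u$ in $L^2$.

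For this I would apply Hölder's inequality with exponents $p_i=2k/j_i$, so that $\sum 1/p_i=1/2$, together with the one-dimensional Gagliardo--Nirenberg inequality
\[
\|\partial_\alpha^{j_i}u\|_{L^{p_i}}\le C\,\|u\|_{H^k}^{\theta_i}\,\|u\|_{L^\infty}^{1-\theta_i}.
\]
A scaling computation gives $\theta_i=j_i/k$, and $\sum_i\theta_i=1$, so
\[
\Bigl\|\prod_{i=1}^\ell\partial_\alpha^{j_i}u\Bigr\|_{L^2}\le C\|u\|_{H^k}\,\|u\|_{L^\infty}^{\ell-1}.
\]
Summing over $\ell$ and over partitions of $k$, and combining with the $L^\infty$ bound on $F^{(\ell)}(u)$, one obtains $\|\partial_\alpha^k F(u)\|_0\le C\|u\|_{H^k}$ with $C$ depending only on $\|u\|_{L^\infty}$ and $\max_{0\le j\le k}|F^{(j)}|$ evaluated on $[-\|u\|_{L^\infty},\|u\|_{L^\infty}]$. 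Together with the base case this produces $\|F(u)\|_k\le C(1+\|u\|_k)$.

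The main technical obstacle is verifying that the Gagliardo--Nirenberg exponents line up so that $\sum\theta_i=1$ for every partition appearing in Faà di Bruno; the choice $p_i=2k/j_i$ is what makes this combinatorial bookkeeping close cleanly. An alternative route, which avoids Faà di Bruno altogether, is to prove the inductive step using the Sobolev algebra property (Lemma \ref{lemma2}) applied to $\partial_\alpha F(u)=F'(u)u_\alpha$ together with the interpolation Lemma \ref{lemma1}; this works cleanly when $k-1>1/2$ but requires a separate low-regularity argument for small $k$, which is why the Faà di Bruno/Gagliardo--Nirenberg route is cleaner and is the one I would ultimately present.
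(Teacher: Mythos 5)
Your argument is correct and is precisely the classical Moser-type proof via the Fa\`a di Bruno expansion combined with Gagliardo--Nirenberg interpolation; the exponent bookkeeping $p_i=2k/j_i$, $\theta_i=j_i/k$ with $\sum\theta_i=1$ closes because the Fa\`a di Bruno partitions satisfy $\sum j_i=k$ with each $j_i\ge 1$. The paper itself gives no proof of Lemma \ref{lemma3}, deferring to Taylor \cite{taylor3}, where essentially this same argument appears, so you have reproduced the intended proof.
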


We will need to ensure that the interfaces we consider are non-self-intersecting, and for this purpose we follow
the approach of other works in the field such as \cite{wu2D} in using a chord-arc condition.  As in \cite{BHL}, we define $q_{1}$ to be
the first divided difference for the position of the curve,
\begin{equation}\nonumber
q_{1}[z](\alpha,\alpha',t)=\frac{z(\alpha,t)-z(\alpha',t)}{\alpha-\alpha'}.
\end{equation}
The chord-arc condition is that there exists a constant $\bar{c}>0$ such that
\begin{equation}\label{chordArc}
|q_{1}[z_{d}]|=\left|\frac{z_d(\alpha)-z_d(\alpha^\prime)}{\alpha-\alpha^\prime}\right|>\bar{c}, \quad \forall \alpha, \alpha'.
\end{equation}
When proving existence of solutions for an initial value problem as in \cite{ambroseThesis}, \cite{liu2017well},
\eqref{chordArc} is assumed at the initial time, and then the condition is shown to hold for some additional amount of time.
In the present work we are not proving that solutions exist but instead are studying the convergence of solutions
already known to exist; we may therefore assume \eqref{chordArc} for the solutions we study, on their entire interval
of existence.

Next we have bounds for the operator $K,$ which has arisen in the approximation of the Birkhoff-Rott integral by
Hilbert transforms.
\begin{lemma}\label{lemma5}
Let $n\ge 3$ be an integer. Assume $z_d\in H^n$ and that \eqref{chordArc} holds.
Then $K[z_d]:H^1 \rightarrow H^{n-1}$ and  $K[z_d]:H^0 \rightarrow H^{n-2},$ with the following estimates:
\begin{align}
\nonumber
\|K[z_d]f\|_{n-1}\le C\|f\|_1 \exp\{C_1 \|z_d\|_n\},\\
\nonumber
\|K[z_d]f\|_{n-2}\le C\|f\|_0 \exp\{C_1 \|z_d\|_n\}.
\end{align}
Furthermore,
\begin{align}\nonumber
\|K[z_d](f_\alpha)\|_{n-3}\le C\|f\|_0 \exp\{C_1 \|z_d\|_n\}.
\end{align}
\end{lemma}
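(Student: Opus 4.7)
The plan is to exploit the subtraction built into the definition of $K[z_d]$ so that, although each cotangent term is a Cauchy-type singular kernel on its own, their difference is a jointly smooth, periodic kernel whose Sobolev norms are controlled by $\|z_d\|_n$ and the chord-arc constant $\bar c$. To see this, set $\beta=\alpha-\alpha'$, write $z_d(\alpha)-z_d(\alpha') = \beta\, q_1[z_d](\alpha,\alpha')$ with $q_1[z_d](\alpha,\alpha)=z_\alpha(\alpha)$, and use the expansion $\cot(x)=1/x + h(x)$ with $h$ real-analytic near $0$. The pole pieces are $2/(\beta\,q_1[z_d])$ and $2/(\beta\,z_\alpha(\alpha'))$, whose difference equals
\[
\frac{2}{\beta}\cdot\frac{z_\alpha(\alpha')-q_1[z_d](\alpha,\alpha')}{q_1[z_d](\alpha,\alpha')\,z_\alpha(\alpha')}.
\]
Writing $q_1[z_d](\alpha,\alpha') = \int_0^1 z_\alpha(\alpha'+s\beta)\,ds$ shows that $z_\alpha(\alpha')-q_1[z_d](\alpha,\alpha') = O(\beta)$ jointly smoothly, so the apparent $1/\beta$ cancels and the kernel
\[
k(\alpha,\alpha')=\cot\bigl(\tfrac12(z_d(\alpha)-z_d(\alpha'))\bigr) - \frac{1}{z_\alpha(\alpha')}\cot\bigl(\tfrac{\alpha-\alpha'}{2}\bigr)
\]
is smooth in both variables. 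Applying the composition estimate (Lemma \ref{lemma3}) to the reciprocals $1/q_1[z_d]$ and $1/z_\alpha$, both bounded away from zero by \eqref{chordArc}, produces the exponential factor $\exp(C_1\|z_d\|_n)$.

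With the kernel bound in hand, the first two estimates follow by differentiating under the integral sign. For $\|K[z_d]f\|_{n-1}\le C\|f\|_1 \exp(C_1\|z_d\|_n)$, I apply $\partial_\alpha^{n-1}$ to $K[z_d]f$; the top-order kernel derivative only lies in $L^2_\alpha$ (since $z_d\in H^n$ provides only $n-1$ bounded derivatives on $q_1[z_d]$), so I integrate by parts in $\alpha'$ once to shift one derivative onto $f$, yielding schematically
\[
\partial_\alpha^{n-1}K[z_d]f = \int \partial_\alpha^{n-2}\partial_{\alpha'}k\,f\,d\alpha' + \int \partial_\alpha^{n-2}k\,f_\alpha\,d\alpha',
\]
each of which is controlled in $L^2_\alpha$ by $\|f\|_1$ times the exponential (the Sobolev algebra, Lemma \ref{lemma2}, handles the products arising from chain-rule differentiation of $k$). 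For $\|K[z_d]f\|_{n-2}\le C\|f\|_0 \exp(C_1\|z_d\|_n)$, I differentiate only $n-2$ times; the top-order kernel derivative is then in $L^\infty_{\alpha,\alpha'}$ via the embedding $H^1(\mathds{T})\hookrightarrow L^\infty$, and Cauchy--Schwarz in $\alpha'$ gives the bound by $\|f\|_0$.

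The third estimate is obtained by integrating by parts in $\alpha'$ at the outset,
\[
K[z_d](f_\alpha)(\alpha) = -\int_0^{2\pi} \partial_{\alpha'}k(\alpha,\alpha')\,f(\alpha')\,d\alpha',
\]
and observing that the new kernel $\partial_{\alpha'}k$ is exactly one derivative rougher in $z_d$ than $k$. Applying the $H^{n-2}\!\leftarrow\!H^0$ argument of the preceding paragraph to this rougher kernel produces a target space of one lower order, namely $H^{n-3}$, which is the claimed bound.

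The main obstacle is the careful verification that the pointwise diagonal cancellation really lifts to a Sobolev bound with the stated exponential dependence on $\|z_d\|_n$. This requires treating $q_1[z_d]$ as a smooth function of $(\alpha,\alpha')$ via the integral representation $q_1[z_d]=\int_0^1 z_\alpha(\alpha'+s\beta)\,ds$ and then systematically combining Lemmas \ref{lemma2} and \ref{lemma3}; the chord-arc condition \eqref{chordArc} is what allows $1/q_1[z_d]$ to be controlled solely in terms of $\|z_d\|_n$, rather than by a negative power of some infimum that could vanish.
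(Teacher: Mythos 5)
Your opening computation --- writing $z_d(\alpha)-z_d(\alpha')=\beta\, q_1[z_d]$, expanding $\cot$, and extracting the pole difference --- is exactly the divided-difference decomposition the paper points to (it cites \cite{ambroseThesis}, \cite{BHL} for the proof rather than supplying one), and it is correct. But your argument for the first estimate rests on a miscount of the kernel's regularity. You claim $\partial_\alpha^{n-1}k\in L^2_\alpha$ ``since $z_d\in H^n$ provides only $n-1$ bounded derivatives on $q_1[z_d]$,'' as though the cancelled kernel had the regularity of $q_1$. It does not: the cancellation is precisely that $\bigl(z_\alpha(\alpha')-q_1\bigr)/\beta = -\int_0^1 s\int_0^1 z_{\alpha\alpha}(\alpha'+ts\beta)\,dt\,ds$, so the surviving main kernel is (modulo analytic pieces and smooth prefactors) an averaged $z_{\alpha\alpha}$, which lives in $H^{n-2}$, not $H^{n-1}$, in each variable. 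The cancellation costs exactly one derivative of $z_d$. Thus $\partial_\alpha^{n-2}k\in L^2_{\alpha,\alpha'}$ but $\partial_\alpha^{n-1}k$ is only a distribution, and your starting claim fails. The integration-by-parts display compounds this: as written, $\int\partial_\alpha^{n-2}\partial_{\alpha'}k\,f\,d\alpha'$ and $\int\partial_\alpha^{n-2}k\,f_\alpha\,d\alpha'$ are exact negatives of each other under integration by parts, so your displayed right-hand side is identically zero. If the intent was $\partial_\alpha=(\partial_\alpha+\partial_{\alpha'})-\partial_{\alpha'}$, the surviving term $\int\partial_\alpha^{n-2}(\partial_\alpha+\partial_{\alpha'})k\,f\,d\alpha'$ is no better: $(\partial_\alpha+\partial_{\alpha'})k$ is again one derivative rougher than $k$ (the single poles re-cancel but the survivor now involves an averaged $z_{\alpha\alpha\alpha}$), so no finite number of such passes closes the argument.

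What writing the kernel ``in terms of divided differences'' actually buys you is the ability to integrate by parts inside the integral representation, not just after it. Keep $Q_2=\int_0^1 s\int_0^1 z_{\alpha\alpha}(\alpha'+ts\beta)\,dt\,ds$ explicit, bring the $\alpha$-derivatives inside, pull the $(t,s)$-integrals outside the $\alpha'$-integral by Fubini, and for fixed $(t,s)$ write the top-order factor as $z^{(n+1)}(\alpha'+ts\beta)=(1-ts)^{-1}\partial_{\alpha'} z^{(n)}(\alpha'+ts\beta)$ before integrating by parts in $\alpha'$; the resulting $(1-ts)^{-1}$, together with the $(1-ts)^{-1/2}$ arising from the change of variable in $\alpha'$, is integrable against $s(ts)^{n-1}$ over $[0,1]^2$. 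That mechanism is where the $\|f\|_1$-dependence in the first estimate actually comes from. Your treatment of the second and third estimates is essentially fine, with one caveat: the appeal to $L^\infty_{\alpha,\alpha'}$ via $H^1\hookrightarrow L^\infty$ is not available for the borderline term $\partial_\alpha^{n-2}Q_2$, which involves $z^{(n)}\in L^2$ only; there the Cauchy--Schwarz argument through $\partial_\alpha^{n-2}k\in L^2_{\alpha,\alpha'}$ is what closes it.
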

We do not include the proof here, but the relevant details can be found instead in \cite{ambroseThesis}.
The proof relies on writing the kernel for $K$ in terms of divided differences as in \cite{BHL}.
In addition to the bounds of Lemma \ref{lemma5} for $K,$ we will also need a Lipschitz estimate for $K,$ to be used
when considering differences of solutions in demonstrating that a limit may be taken.

\begin{lemma}\label{lemma6}
Let $\theta$ and $\theta^\prime$ be in $ H^3$. Let $L$ and $L^\prime$ be the corresponding lengths of the associated curves $z_d$ and $z_d^\prime,$  and assume that the associated divided differences $q_{1}[z_{d}]$
and $q_{1}[z_{d}^{\prime}]$ satisfy \eqref{chordArc}. Assume there exist $ \beta>0$  such that $ L<\beta$ and $ L^\prime<\beta.$
Then there exists $C>0$ such that for any $f\in H^{3},$
\begin{align}\nonumber
\|K[z_d]f-K[z_d^\prime]f\|_i\le C\|\theta-\theta^\prime\|_i\|f\|_i
\end{align}
for any $i\in\{1,2,3\}$.
\end{lemma}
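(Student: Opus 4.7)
The plan is to regard $K[z_d]$ as depending smoothly on $z_d$ and, through \eqref{zdd}, on $\theta,$ and then to estimate the difference $K[z_d]f - K[z_d']f$ by a ``first variation'' argument that mirrors the proof of Lemma \ref{lemma5} but carries two copies in parallel.

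First I would control $z_d - z_d'$ by $\theta - \theta'.$ From \eqref{LoperatorDefinition}, the uniform bound $L, L' < \beta,$ and Lemma \ref{lemma3} applied to $\cos,\sin,$ one obtains $|L - L'| \le C\|\theta - \theta'\|_0.$ Combined with \eqref{zdd}, this yields $\|z_d - z_d'\|_{i+1} \le C\|\theta - \theta'\|_i$ (and in particular a bound on $\|z_\alpha - z_\alpha'\|_i$), with constants depending on $\beta$ and on the $H^i$-norms of $\theta, \theta'.$ Analogously, the divided difference $q_1[z_d] - q_1[z_d']$ and its $\alpha$-derivatives are controlled in matching Sobolev norms by $\|\theta - \theta'\|_i.$

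Next I would rewrite the kernel of $K[z_d]$ in divided-difference form as in the proof of Lemma \ref{lemma5} (following \cite{BHL} and \cite{ambroseThesis}). Writing $z_d(\alpha) - z_d(\alpha') = (\alpha - \alpha')\,q_1[z_d]$ and extracting the principal $2/x$ parts from both cotangents, the kernel becomes a smooth function of $(\alpha,\alpha'),$ whose smoothness is quantified by the chord-arc condition \eqref{chordArc} (which keeps $q_1[z_d], q_1[z_d']$ bounded away from zero) and by $\|z_d\|_n.$ The difference of the two kernels can then be expressed, through telescoping and the mean-value theorem in $q_1[z_d]$ and in $z_\alpha(\alpha'),$ as a finite sum of terms, each factoring as a bounded smooth kernel (uniform under the chord-arc and length bounds) times a single ``difference'' factor such as $q_1[z_d] - q_1[z_d']$ or $z_\alpha - z_\alpha'.$

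Finally, I would bound these terms in $H^i$ for $i = 1, 2, 3$ using the Sobolev algebra property of Lemma \ref{lemma2} together with the composition estimate of Lemma \ref{lemma3}, converting each difference factor back to $\|\theta - \theta'\|_i$ via Step~1 and pulling $\|f\|_i$ out of the integral. The main technical obstacle is the bookkeeping of $\alpha$-derivatives: since $z_{\alpha\alpha} = i\theta_\alpha z_\alpha,$ each derivative on the kernel produces nonlinear combinations of $\theta, \theta_\alpha,\ldots,$ and one must systematically decompose the difference of two such nonlinear expressions as a sum of products in which each summand carries exactly one difference factor (e.g.\ writing $\theta_\alpha z_\alpha - \theta_\alpha' z_\alpha' = (\theta_\alpha - \theta_\alpha')z_\alpha + \theta_\alpha'(z_\alpha - z_\alpha')$). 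This is a careful but essentially routine repetition of the derivation of Lemma \ref{lemma5} with a second curve in tow; the restriction to $i \le 3$ reflects the hypothesis $\theta, \theta' \in H^3.$
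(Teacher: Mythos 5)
The paper does not actually prove Lemma \ref{lemma6}; it defers to \cite{ambroseRealThesis}, \cite{ambroseThesis}, and \cite{ambroseDarcy3}. Your sketch --- controlling $z_d - z_d'$ (via $L-L'$ and composition estimates on $e^{i\theta}$) by $\|\theta-\theta'\|_i$, rewriting the $K$ kernel in divided-difference form as in \cite{BHL}, telescoping the difference of kernels so that each summand carries exactly one difference factor, and closing with the Sobolev algebra and composition lemmas --- is precisely the argument used in those references, so it is the intended proof.
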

Again, we do not prove this here, but details may be found in \cite{ambroseRealThesis}, \cite{ambroseThesis} or \cite{ambroseDarcy3}.

As we have introduced the commutator of the Hilbert transform and multiplication by a smooth function, we will need
estimates for this.  Proof may be found in  \cite{liu2019sufficiently},  and similar results are contained in
\cite{ambroseThesis}, \cite{ambroseDarcy3}, and \cite{BHL}.
  \begin{lemma}\label{lemma7} Let $0\leq s\le\varpi_{1}$ be such that $s<\varpi_{1}+\varpi_{2} -1/2$.
There exists $C>0$ such that for any $\phi \in H^{\varpi_{1}}$ and $f \in H^{\varpi_{2}}$ , we have
\begin{align}\nonumber
\|[H,\phi]f\|_{H^s}\le C\|\phi\|_{H^{\varpi_{1}}}\|f\|_{H^{\varpi_{2}}}.
\end{align}
\end{lemma}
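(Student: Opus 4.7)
The plan is Fourier-analytic. First I would expand everything in Fourier series on the circle: since the periodic Hilbert transform acts as multiplication by $-i\,\mathrm{sgn}(k)$ on the character $e^{ik\alpha}$, a direct computation gives
\begin{equation*}
\widehat{[H,\phi]f}(j) \;=\; i\sum_{n\in\mathds{Z}}\bigl(\mathrm{sgn}(n)-\mathrm{sgn}(j)\bigr)\hat{\phi}(j-n)\hat{f}(n).
\end{equation*}
The decisive observation is that the factor $\mathrm{sgn}(n)-\mathrm{sgn}(j)$ vanishes unless $j$ and $n$ have opposite signs (or one of them is zero); in that regime $|j-n|=|j|+|n|$, and in particular $|j-n|\ge\max(|j|,|n|)$. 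This is the quantitative form of the smoothing property of $[H,\phi]$: each high frequency of the output is always carried by the $\phi$-factor, never by the $f$-factor alone.

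Next I would feed this cancellation into the $H^s$ norm. Using $|j|\le|j-n|$ one replaces $(1+|j|)^s$ by $(1+|j-n|)^s$. Setting $A(m)=(1+|m|)^{\varpi_{1}}|\hat{\phi}(m)|$ and $B(n)=(1+|n|)^{\varpi_{2}}|\hat{f}(n)|$, which satisfy $\|A\|_{\ell^{2}}=\|\phi\|_{H^{\varpi_{1}}}$ and $\|B\|_{\ell^{2}}=\|f\|_{H^{\varpi_{2}}}$, the pointwise estimate becomes
\begin{equation*}
(1+|j|)^{s}\bigl|\widehat{[H,\phi]f}(j)\bigr|\;\le\;2\sum_{n} A(j-n)\,B(n)\,(1+|j-n|)^{s-\varpi_{1}}(1+|n|)^{-\varpi_{2}}.
\end{equation*}
The hypothesis $s\le\varpi_{1}$ makes $(1+\cdot)^{s-\varpi_{1}}$ non-increasing, so the bound $|j-n|\ge|n|$ yields $(1+|j-n|)^{s-\varpi_{1}}\le(1+|n|)^{s-\varpi_{1}}$, which collapses the two weights into a single weight $(1+|n|)^{-\omega}$ with $\omega=\varpi_{1}+\varpi_{2}-s$. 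This transfer works uniformly even when $\varpi_{2}$ is negative, which is the point of using $|j-n|\ge|n|$ rather than a crude product estimate.

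Finally a Cauchy--Schwarz inequality in the $n$ variable gives
\begin{equation*}
\Bigl(\sum_{n} A(j-n)B(n)(1+|n|)^{-\omega}\Bigr)^{2}\;\le\;\Bigl(\sum_{n} A(j-n)^{2}B(n)^{2}\Bigr)\Bigl(\sum_{n}(1+|n|)^{-2\omega}\Bigr).
\end{equation*}
The second factor is a finite constant precisely because $2\omega>1$, which is the hypothesis $s<\varpi_{1}+\varpi_{2}-\tfrac12$. Summing the first factor over $j$ and interchanging the order of summation produces $\|A\|_{\ell^{2}}^{2}\|B\|_{\ell^{2}}^{2}$, which is exactly $\|\phi\|_{H^{\varpi_{1}}}^{2}\|f\|_{H^{\varpi_{2}}}^{2}$. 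The main obstacle is the weight bookkeeping in the middle paragraph: one must use the opposite-sign cancellation, the condition $s\le\varpi_{1}$, and the condition $s<\varpi_{1}+\varpi_{2}-\tfrac12$ each exactly once, and the direction of the inequality $(1+|j-n|)^{s-\varpi_{1}}\le(1+|n|)^{s-\varpi_{1}}$ must be tracked carefully since it reverses if $s>\varpi_{1}$.
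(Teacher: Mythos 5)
Your proof is correct and complete, and it is precisely the Fourier-series argument that the cited references (such as Ambrose's thesis and its successors) use for this estimate; the paper itself gives no proof, only a citation. You use each hypothesis exactly once and in the right place: $s\ge 0$ to monotonically replace $(1+|j|)^s$ by $(1+|j-n|)^s$, $s\le\varpi_1$ to transfer the remaining weight from the $\phi$-frequency to the $f$-frequency via $|j-n|\ge|n|$, and $s<\varpi_1+\varpi_2-\tfrac12$ for the summability of $(1+|n|)^{-2\omega}$. One small presentational point worth making explicit: the inequality $(1+|j-n|)^{s-\varpi_1}\le(1+|n|)^{s-\varpi_1}$ is applied while the sum is still restricted to the opposite-sign set $\{n:\operatorname{sgn}(n)\neq\operatorname{sgn}(j)\}$ where $|j-n|=|j|+|n|\ge|n|$; only afterward should the sum be extended to all $n$ (harmless since the terms are nonnegative). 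Your observation that the factorization $[A(j-n)B(n)]\cdot[(1+|n|)^{-\omega}]$ inside Cauchy--Schwarz is the right split (rather than the crude Young/convolution split) is exactly what allows $\varpi_2$ to be negative, and is the heart of why the commutator smooths rather than merely multiplies.
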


\subsection{Further operator estimates}\label{furtherOperatorSection}
We will consider solutions $(\theta,\gamma)$ lying in an open subset of $H^{s}\times H^{s-1},$ where $s$ is a sufficiently large
positive integer.  For given positive constants $\bar{d}_{1},$ $\bar{d}_{2},$ and $\bar{d}_{3},$ we let $\mathcal{O}$ be the
subset of $H^{s}\times H^{s-1}$ such that for all $(f_{1},f_{2})\in\mathcal{O},$ the following three conditions are satisfied:
\begin{align}\label{d1}
& {\|(f_1,f_2)\|^2_{H^{s-1}\times H^{s-3/2}}+\sigma\|f_1\|_{H^{s}}^2 +\rho_0\|f_2\|^2_{H^{s-1}}}<\bar{d}_{1}^{2},\\
\nonumber
&L[f_1]<\bar{d_2},\\
\nonumber
&
|q_1[f_1](\alpha,\alpha^\prime)|>\bar{d}_3, \quad \forall \alpha,\alpha^\prime. 
\end{align}
We notice that when $\sigma=0$ and $\rho_0=0$, the inequality \eqref{d1}
becomes simply $\|(f_1,f_2)\|_{H^{s-1}\times H^{s-3/2}}<\bar{d}_1$.

We now consider $(\theta,\gamma)\in \mathcal{O},$ and we address the solvability of our integral equation \eqref{gammaend2}
for $\gamma_{t}.$  As the relevant operators involve involve $z_d$ and $z_\alpha$, we will first give
estimates for $z_d$ and $z_\alpha$ in terms of $\theta$.

\begin{lemma}\label{lemmazazd}
Let $(\theta,\gamma)\in \mathcal{O}$ be given. Then, the following estimates are satisfied:
\begin{align}
&\|z_\alpha\|_s\le C(1+\|\theta\|_s), \label{za}\\
&\|z_d\|_{s+1}\le C(1+\|\theta\|_s). \label{zd}
\end{align}
\end{lemma}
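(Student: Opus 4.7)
The plan is to reduce both bounds to Lemma \ref{lemma3} (the composition estimate) applied to the trigonometric functions $\cos\theta$ and $\sin\theta$, using the formula $z_\alpha = \frac{L}{2\pi}(\cos\theta + i\sin\theta)$ from \eqref{xya} and the control $L<\bar d_2$ that is built into the definition of $\mathcal O$.

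For \eqref{za}, I would begin by observing that $z_\alpha=\tfrac{L}{2\pi}e^{i\theta}$ with $L=L[\theta]$ given by \eqref{LoperatorDefinition}. Since $(\theta,\gamma)\in\mathcal O$ ensures $L<\bar d_2$, the scalar factor $L/2\pi$ is simply an admissible constant. Applying Lemma \ref{lemma3} with the smooth functions $F(u)=\cos u$ and $F(u)=\sin u$ gives $\|\cos\theta\|_s,\|\sin\theta\|_s\le C(1+\|\theta\|_s)$, where $C$ depends on $\bar d_1$ through the uniform bound on $\|\theta\|_{H^{s-1}}\hookrightarrow L^\infty$ (recall $s$ is a sufficiently large integer so the composition lemma applies). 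Summing the real and imaginary parts and multiplying by $L/2\pi$ then yields \eqref{za}.

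For \eqref{zd}, I would use the representation \eqref{zdd}, which gives $\partial_\alpha z_d=z_\alpha$ directly. Then
\begin{equation*}
\|z_d\|_{s+1}^2 = \|z_d\|_0^2 + \|\partial_\alpha z_d\|_s^2 = \|z_d\|_0^2 + \|z_\alpha\|_s^2,
\end{equation*}
so it suffices to control $\|z_d\|_0$. Since $z_d(\alpha,t)=\int_0^\alpha z_\alpha(\alpha',t)\,d\alpha'$, the $L^\infty$ norm of $z_d$ over $[0,2\pi]$ is bounded by $2\pi\|z_\alpha\|_{L^\infty}$, and hence $\|z_d\|_0\le C\|z_\alpha\|_s$ after a Sobolev embedding. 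Combining with \eqref{za} gives \eqref{zd}.

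There is no substantive obstacle here; the estimate is a bookkeeping consequence of the composition lemma together with the a priori boundedness of $L$ on $\mathcal O$. The only mildly delicate point is making sure the constant $C$ in Lemma \ref{lemma3} is uniform over $\mathcal O$, which follows because $\|\theta\|_{L^\infty}$ is controlled by $\bar d_1$ via the Sobolev embedding $H^{s-1}\hookrightarrow L^\infty$, so that the sup-norms of the derivatives of $\cos$ and $\sin$ evaluated along $\theta$ (which enter the constant in Lemma \ref{lemma3}) are themselves controlled uniformly on $\mathcal O$.
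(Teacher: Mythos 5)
Your proof is correct and follows essentially the same route as the paper: \eqref{za} is obtained from \eqref{xya}, the composition estimate of Lemma~\ref{lemma3}, and the $L<\bar d_2$ bound built into $\mathcal O$, while \eqref{zd} then follows because $z_d$ is an antiderivative of $z_\alpha$ via \eqref{zdd}. The only cosmetic point is that the identity $\|z_d\|_{s+1}^2=\|z_d\|_0^2+\|\partial_\alpha z_d\|_s^2$ should really be an equivalence of norms (up to fixed constants) rather than an exact equality unless one adopts the convention $\|f\|_{s}^2=\sum_{j\le s}\|\partial_\alpha^j f\|_0^2$; this does not affect the conclusion.
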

\begin{proof}
The inequality \eqref{za}  follows immediately from \eqref{xya}
together with the standard composition estimate (Lemma \ref{lemma3}) and  the fact that the definition of $\mathcal{O}$ includes a bound on the length.   Since $z_d$ is defined in \eqref{zdd} by an integral of $x_\alpha$ and $y_\alpha,$ the estimate \eqref{zd} follows.
\end{proof}

Recall $T[\theta]\gamma_t$, $J[z_d](\gamma_t)$ and $S[z_d](\gamma_t)$ are defined by \eqref{JzDef}, \eqref{Sdef}, and \eqref{TgammaDef}.
  Generally for any $f\in H^0$, we have the definitions
\begin{align}\nonumber
J[z_d]f=\mathrm{Re}\left\{z_\alpha K[z_d]f+\frac{z_\alpha}{2i}
\left[H,\frac{1}{z_\alpha}\right]f\right\}
\end{align}
\begin{equation}\nonumber
S[z_d](f)=\mathrm{Re}\left\{\frac{i z_\alpha^2}{s_\alpha}K[z_d]\left(\left(\frac{f}{z_\alpha}
\right)_\alpha\right)\right\}
+\mathrm{Re}\left\{\frac{z_\alpha^2}{2s_\alpha}
\left[H,\frac{1}{z_\alpha^2}\right]\left(z_\alpha\left(\frac{f}{z_\alpha}\right)_\alpha\right)\right\},
\end{equation}
and
\begin{equation}
\label{TDef}
T[\theta](f)=-2AJ[z_d]f+2\rho\left(S[z_d](f)-\frac{2\pi\widetilde{A}\theta_\alpha}{L}J[z_d]f\right).
\end{equation}
Moreover, we have the following estimates for $J[z_d]$ and $S[z_d]$.
 \begin{lemma}\label{Tbound}
 Let $(\theta,\gamma)\in \mathcal{O}$ be given.
 Then $J[z_d]$ and $S[z_d],$ which are defined by \eqref{JzDef} and \eqref{Sdef}, are bounded operators from $H^0$ to $H^{s-1}$, and there exist positive constants
 $C_1$ and $C_2$ such that for any $f\in H^0$,
\begin{align}\nonumber
\|J[z_d]f\|_{s-1}\le C_1\exp\{C_2\|\theta\|_s\}\|f\|_0,\\
\|S[z_d](f)\|_{s-1}\le C_1\exp\{C_2\|\theta\|_s\}\|f\|_{1}.\nonumber
\end{align}
Moreover, the following Lipschitz estimates hold, for any $f\in H^{3}:$
\begin{align}\label{Lip1}
\|J[z_d]f-J[z_d^\prime]f\|_{1}\le C_1\|\theta-\theta^\prime\|_1\|f\|_1,\\
\|S[z_d](f)-S[z_d^\prime](f)\|_{1}\le C_1\|\theta-\theta^\prime\|_1\|f\|_{1}.\label{Lip2}
\end{align}
 \end{lemma}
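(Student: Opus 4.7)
The plan is to estimate each of the two terms in the definitions of $J[z_d]$ and $S[z_d]$ separately and then sum, using Sobolev algebra (Lemma \ref{lemma2}) to handle the multiplicative factors of $z_\alpha$, Lemma \ref{lemma5} to bound $K[z_d]$ in Sobolev norms, and Lemma \ref{lemma7} to bound the commutators. For $J[z_d]f$ in $H^{s-1}$, I take $n = s+1$ in Lemma \ref{lemma5} to get $\|K[z_d]f\|_{s-1} \le C\|f\|_0 \exp\{C\|z_d\|_{s+1}\}$, and then convert $\|z_d\|_{s+1}$ to a bound in $\|\theta\|_s$ via Lemma \ref{lemmazazd}; multiplication by $z_\alpha \in H^s$ stays in $H^{s-1}$ by algebra. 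For the commutator term $\frac{z_\alpha}{2i}[H,1/z_\alpha]f$, I apply Lemma \ref{lemma7} with $\varpi_1 = s$, $\varpi_2 = 0$ (since $s-1 < s-1/2$) to get $\|[H,1/z_\alpha]f\|_{s-1} \le C\|1/z_\alpha\|_s \|f\|_0$; the $H^s$-smoothness of $1/z_\alpha = (2\pi/L)e^{-i\theta}$ follows from Lemma \ref{lemma3} and the lower bound on $L$.

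For $S[z_d]$, the key preliminary observation is the identity $z_\alpha(f/z_\alpha)_\alpha = f_\alpha - if\theta_\alpha$, coming from $z_{\alpha\alpha} = i\theta_\alpha z_\alpha$; this explains why $f \in H^1$ is the natural hypothesis, since it gives $\|(f/z_\alpha)_\alpha\|_0 \le C(1+\|\theta\|_s)\|f\|_1$. With this in hand, Lemma \ref{lemma5} at $n = s+1$ and Lemma \ref{lemma7} at $(\varpi_1,\varpi_2) = (s,0)$ yield the bound on each of the two pieces of $S[z_d](f)$, after multiplying by $z_\alpha^2/s_\alpha$ (which lives in $H^s$ by Lemma \ref{lemmazazd} and the lower bound on $L$).

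For the Lipschitz estimates \eqref{Lip1}--\eqref{Lip2}, the plan is the standard telescoping decomposition
\[
z_\alpha K[z_d]f - z_\alpha' K[z_d']f = (z_\alpha - z_\alpha')K[z_d]f + z_\alpha'\bigl(K[z_d]f - K[z_d']f\bigr),
\]
together with its analogue for commutators, $[H,1/z_\alpha]f - [H,1/z_\alpha']f = [H,\,1/z_\alpha - 1/z_\alpha']f$. The first piece is controlled by the boundedness part of Lemma \ref{lemma5} (at low index) combined with the elementary bound $\|z_\alpha - z_\alpha'\|_1 \le C\|\theta - \theta'\|_1$, which itself follows from \eqref{xya} and the Lipschitz dependence $|L - L'| \le C\|\theta - \theta'\|_0$ obtained by differencing \eqref{LoperatorDefinition}. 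The second piece is controlled by Lemma \ref{lemma6}. For the commutator differences, the chord-arc lower bound on $|z_\alpha|$ gives $\|1/z_\alpha - 1/z_\alpha'\|_1 \le C\|\theta - \theta'\|_1$, after which Lemma \ref{lemma7} closes the estimate. The same scheme works for $S[z_d]$, with the extra step of expanding
\[
(f/z_\alpha)_\alpha - (f/z_\alpha')_\alpha = f_\alpha\bigl(1/z_\alpha - 1/z_\alpha'\bigr) - f\bigl(z_{\alpha\alpha}/z_\alpha^2 - z_{\alpha\alpha}'/(z_\alpha')^2\bigr)
\]
and bounding each piece in $H^0$ by $C\|\theta - \theta'\|_1 \|f\|_1$.

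I expect the main obstacle to be bookkeeping in the Lipschitz argument for $S[z_d]$, since there the argument of both $K$ and the commutator involves a derivative of $f/z_\alpha$, so every telescoping step must be audited to ensure the final estimate collapses to the stated bound $C\|\theta - \theta'\|_1 \|f\|_1$ rather than a larger quantity involving higher Sobolev norms. A secondary concern is keeping track of the derivative count when invoking Lemma \ref{lemma5} — the bound uses $\|z_d\|_n$ at one higher derivative than $\theta$, but Lemma \ref{lemmazazd} is precisely calibrated to accommodate this, so this part is routine rather than substantive.
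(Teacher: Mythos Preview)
Your proposal is correct and follows essentially the same approach as the paper's proof: bound the $K[z_d]$ pieces via Lemma~\ref{lemma5} (at $n=s+1$, combined with Lemma~\ref{lemmazazd}), bound the commutator pieces via Lemma~\ref{lemma7}, and for the Lipschitz estimates telescope and invoke Lemmas~\ref{lemma5}, \ref{lemma6}, and~\ref{lemma7}. Your explicit identity $z_\alpha(f/z_\alpha)_\alpha = f_\alpha - if\theta_\alpha$ and your care in noting that $f\in H^3$ is what makes Lemma~\ref{lemma6} at $i=1$ applicable to $(f/z_\alpha)_\alpha$ in the $S[z_d]$ Lipschitz argument are both more detailed than the paper's sketch, which only writes out the $J[z_d]$ case and declares the $S[z_d]$ case similar.
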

 \begin{proof}
 The operators contain $K[z_d]$ and Hilbert commutators. The following estimates take them into account separately.
By Lemma \ref{lemma5}, for $n\ge3,$ we have the following estimates:
\begin{align*}
\|K[z_d]f\|_{s-1}\le  C_1\|f\|_{0}\exp\{C_2\|z_d\|_{s+1}\}
\end{align*}
and
\begin{align*}
&\left\|K[z_d]\left(\left(\frac{f}{z_\alpha}\right)_{\alpha }\right)\right\|_{s-1}
\le C_1\left\|\left(\frac{f}{z_\alpha}\right)_\alpha\right\|_{0}\exp\{C_2\|z_d\|_{s+1}\}\le C_1\|f\|_1\exp\{C_2\|z_d\|_{s+1}\}.
\end{align*}
By Lemma \ref{lemma7}, we have the following:
\begin{align*}
&\left\|\left[H,\frac{1}{z_\alpha^2}\right]f\right\|_{s-1}\le
C_1\left\|\frac{1}{z^2_\alpha}\right\|_{s}\|f\|_0\le C_1\|f\|_0\exp\{C_2\|z_\alpha\|_s\}, \\
\end{align*}
and
\begin{align*}
&\left\|\left[H,\frac{1}{z_\alpha^2}\right]\left(z_\alpha\left(\frac{f}{z_\alpha}\right)_{\alpha }\right)\right\|_{s-1}
\le C_1\left\|\frac{1}{z_\alpha^2}\right\|_{s}
\left\|\left(z_\alpha\left(\frac{f}{z_\alpha}\right)_{\alpha }\right)\right\|_{0}\le C_1\|f\|_1\exp\{C_2\|z_\alpha\|_s\}.
\end{align*}
For the Lipschitz estimates, we first consider $J[z_d]f-J[z_d^\prime]f$ and $S[z_d]f-S[z_d^\prime]f$. Since these both involve  
$K[z_d]$ and Hilbert commutators, here we only
work with $J[z_d]f-J[z_d^\prime]f$ as an example.  Adding and subtracting, we have
\begin{multline}\nonumber
J[z_d]f-J[z_d^\prime]f
\\
=\mathrm{Re}\left\{z_\alpha K[z_d]f+\frac{z_\alpha}{2i}
\left[H,\frac{1}{z_\alpha}\right]f\right\}-\mathrm{Re}\left\{z_\alpha^\prime K[z_d^\prime]f+\frac{z_\alpha^\prime}{2i}
\left[H,\frac{1}{z_\alpha^\prime}\right]f\right\}
\\
=\mathrm{Re}\left\{(z_\alpha-z_\alpha^\prime) K[z_d]f+\frac{z_\alpha-z_\alpha^\prime}{2i}
\left[H,\frac{1}{z_\alpha}\right]f\right\}
\\
+\mathrm{Re}\left\{z_\alpha^\prime (K[z_d]-K[z_d^\prime])f
+\frac{z_\alpha^\prime}{2i}
\left[H,\frac{1}{z_\alpha}-\frac{1}{z_\alpha^\prime}\right]f\right\}.
\end{multline}
Using the estimates for $K[z_d]$ and Hilbert commutators of Lemma \ref{lemma5} and Lemma \ref{lemma7}, and the Lipschitz estimate for $K[z_d]$ of Lemma \ref{lemma6},
we have
\begin{align}\nonumber
\|J[z_d]f-J[z_d^\prime]f\|_{1}\le C_1\|\theta-\theta^\prime\|_1\|f\|_{1}.
\end{align}
(Note that the constants $C_{1}$ and $C_{2}$ may depend upon the constants $\bar{d}_{1},$ $\bar{d}_{2},$
and $\bar{d}_{3}$ which define our open set, but this causes no problem.)
\end{proof}

The article \cite{liu2017well} has proved well-posedness of our initial value problem under
this assumption that the integral equation for $\gamma_{t}$ is solvable.  We again make such an assumption, but now
need the assumption to hold uniformly with respect to $\rho_{0}$ and $\sigma.$
\begin{assumption}\label{solvabilityAssumption}
There exist $c^{*}>0,$ $\rho^{*}>0,$ and $\sigma^{*}>0$ such that for all $\rho_{0}\in[0,\rho^{*}],$
for all $\sigma\in(0,\sigma^{*}],$ and for all $(\theta,\gamma)\in\mathcal{O},$
the operator $I+\frac{2\rho\widetilde{A}}{L}H\partial_{\alpha}+T[\theta]$ is invertible as an operator from
$H^{0}$ to $H^{0},$ with the bound
\begin{equation}\nonumber
\left\|\left(I+\frac{2\rho\widetilde{A}}{L}H\partial_{\alpha}+T[\theta]\right)^{-1}\right\|_{H^{0}\rightarrow H^{0}}\leq c^{*}.
\end{equation}
\end{assumption}
Of course, we should show that this assumption can be satisfied.  To develop an example of when Assumption \ref{solvabilityAssumption}
may be satisfied, we begin by noting the formula
\begin{equation}\nonumber
(I+B_{1}+B_{2})^{-1}=(I+(I+B_{1})^{-1}B_{2})^{-1}(I+B_{1})^{-1}.
\end{equation}
We further have the estimate
\begin{equation}\nonumber
\left\|(I+B_{1})^{-1}B_{2}\right\|_{H^{0}\rightarrow H^{0}}\leq \left\|(I+B_{1})^{-1}\right\|_{H^{0}\rightarrow H^{0}}\|B_{2}\|_{H^{0}\rightarrow H^{0}}.
\end{equation}
If  $\left\|(I+B_{1})^{-1}\right\|_{H^{0}\rightarrow H^{0}}\leq 1$ and $\|B_{2}\|_{H^{0}\rightarrow H^{0}}<1,$ then we see that
$I+B_{1}+B_{2}$ is invertible with bounded inverse.
Here, the operator $B_{1}$ is $B_{1}=\frac{2\rho\widetilde{A}}{L}H\partial_{\alpha},$ and we see immediately from the Fourier symbol of $H\partial_{\alpha}$
that
\begin{equation}\nonumber
\left\|(I+B_{1})^{-1}\right\|_{H_{0}\rightarrow H_{0}}\leq 1.
\end{equation}
The operator $B_{2}$ is $B_{2}=T[\theta],$ and so we see that Assumption \ref{solvabilityAssumption} is satisfied if
\begin{equation}\label{TAssumption}
\|T[\theta]\|_{H_{0}\rightarrow H^{0}}<1.
\end{equation}
From the definition of $T$ and estimates of Lemma \ref{Tbound} for the operators $J[z_{d}]$ and $S[z_{d}],$ we see that \eqref{TAssumption} holds
if $\rho_{0}$ is small and if $\rho_{1}-\rho_{2}$ is also small.  Note that the assumption that $\rho_{0}$ is small is entirely natural here since
we are sending $\rho_{0}$ to zero.  Of course Assumption \ref{solvabilityAssumption} may be satisfied more generally, but
we can guarantee that it holds in this case of small $\rho_{1}-\rho_{2}.$

We further remark on another case in which Assumption \ref{solvabilityAssumption} is known to be satisfied.  A number of works on hydroelastic waves,
such as \cite{ambroseSiegel-hydroelastic}, consider the massless case, i.e. the case in which $\rho_{0}=0.$  The results of the present work
also apply in this case, so that the zero-bending limit of the massless case may be taken.  In this case Assumption \ref{solvabilityAssumption}
is known to be satisfied.  In this case the operator to be inverted is simply $I-2AJ[z_{d}],$ and it was established in \cite{BMO} that the spectral radius
of $2AJ[z_{d}]$ is less than $1,$ and thus the operator is invertible and the assumption is satisfied.

\begin{lemma}\label{priori}
Let $(\theta,\gamma)\in \mathcal{O}$ be given, such that $\theta$ satisfies  $\llangle \sin(\theta)\rrangle=0$ . Then, the following estimates are satisfied:
\begin{align}
&\|\bm{t}\|_i\le C(1+\|\theta\|_i),\quad i=s-1,s,\label{t}\\
&\|\bm{n}\|_i\le C(1+\|\theta\|_i),\quad i=s-1,s,\label{n}\\
&\|\bm{m}\|_i\le C_1\|\gamma\|_{s-3/2}\exp(C_2\|\theta\|_i),\quad s-3/2\le i\le s,\label{m}\\
&\|\bm{W}\|_{s-3/2}\le C_1\|\gamma\|_{s-3/2}\exp(C_2\|\theta\|_{s-1}),\label{Wpri}\\
&\|U\|_{s-3/2}\le C_1\|\gamma\|_{s-3/2}\exp(C_2\|\theta\|_{s-1}), \label{U}\\
&|L_t|\le C_1\|\gamma\|_{s-3/2}\exp(C_2\|\theta\|_{s-1}), \label{Lt}\\
&\|V_W\|_{s-1}\le C_1\|\gamma\|_{s-3/2}\exp(C_2\|\theta\|_{s-1}), \label{VW}\\
&\|V\|_{s-1}\le C_1\|\gamma\|_{s-3/2}\exp(C_2\|\theta\|_{s-1}), \label{V}\\
&\|V\|_{s-3/2}\le C_1\|\gamma\|_{s-3/2}\exp(C_2\|\theta\|_{s-1}), \label{V32}\\
&\|z_t\|_{s-3/2}\le C_1\|\gamma\|_{s-3/2}\exp(C_2\|\theta\|_{s-1}),\label{zt}\\
&\|R_1\|_{s-3/2}\le C_1\|\gamma\|_{s-3/2}\exp(C_2\|\theta\|_{s-1}),\label{R1}\\
&\|R_3\|_{s-2}\le C_1\|\gamma\|_{s-3/2}\exp(C_2\|\theta\|_{s-1}),\label{R3}\\
&\|R_5\|_{s-2}\le C_1\|\gamma\|_{s-3/2}\exp(C_2\|\theta\|_{s-1}),\label{R5}\\
&\|\widetilde{R}_1\|_{s-2}\le C_1\|\gamma\|_{s-1}\exp(C_2\|\theta\|_{s-1}),\label{TildeR1}\\
&\|\widetilde{R}_2\|_{s-2}\le C_1\|\gamma\|_{s-3/2}\exp(C_2\|\theta\|_{s-1}),\label{TildeR2}\\
&\|R\|_{s-3/2}\le C_1\|\gamma\|_{s-3/2}\exp(C_2\|\theta\|_{s-1}).\label{R}
\end{align}
\end{lemma}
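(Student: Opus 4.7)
The plan is to establish the estimates sequentially, since each one relies on the preceding ones together with the preliminary operator bounds from Lemmas \ref{lemma1}--\ref{lemma7}, \ref{lemmazazd}, and \ref{Tbound}. Throughout, the chord-arc condition, the upper bound $L<\bar{d}_2$, and the lower bound $L\ge 2\pi$ ensure that all reciprocals of $z_\alpha$, $s_\alpha$, and $L$ that arise are controlled in $H^i$ by $C\exp(C\|\theta\|_i)$ via the composition estimate (Lemma \ref{lemma3}). The bounds \eqref{t} and \eqref{n} on $\bm{t}$ and $\bm{n}$ are immediate from \eqref{tnDef} and Lemma \ref{lemma3}.

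For \eqref{m}, I would apply the defining formula for $\bm{m}$ directly: its two summands $z_\alpha K[z_d]((\gamma/z_\alpha)_\alpha)$ and $\tfrac{z_\alpha}{2i}[H,1/z_\alpha^2](z_\alpha(\gamma/z_\alpha)_\alpha)$ are handled by the three statements of Lemma \ref{lemma5} and by Lemma \ref{lemma7} respectively, combined with the Sobolev algebra (Lemma \ref{lemma2}) for the products involving $z_\alpha$ and $1/z_\alpha$ and the bounds \eqref{za}, \eqref{zd}. The point is that the $K$-estimates gain two derivatives from $z_d$, which is in $H^{s+1}$, so the entire range $s-3/2\le i\le s$ is accessible. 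For \eqref{Wpri}, I would decompose the Birkhoff--Rott integral analogously to \eqref{w_alpha} (with one fewer $\alpha$-derivative), writing it as $\tfrac{1}{2iz_\alpha}H(\gamma)$ plus commutator and $K[z_d]$ terms of the same type as $\bm{m}$; this yields the $H^{s-3/2}$ bound. The estimate \eqref{U} for $U=\bm{W}\cdot\bm{n}$ then follows via Sobolev algebra, and \eqref{Lt} from Cauchy--Schwarz applied to \eqref{LtDef}.

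For \eqref{VW} and \eqref{V}, I would use \eqref{V_Wep} and \eqref{VDef} together with the fact that the zero-mean antiderivative $\partial_\alpha^{-1}\mathbb{P}$ gains one derivative, bounding $H(\gamma\theta_\alpha)$ by $\|\gamma\|_{s-3/2}\|\theta\|_{s-1}$ via Lemma \ref{lemma2}; $V_0$ is controlled through \eqref{V0Def} by $\|\bm{W}\|_0$. The bound \eqref{zt} on $z_t$ follows from $\bm{X}_t=U\bm{n}+V\bm{t}$ and the previous estimates. The estimates \eqref{R1}--\eqref{R} on the remaining $R$-terms are then a matter of systematically applying Lemmas \ref{lemma5}, \ref{lemma6}, \ref{lemma7}, and \ref{lemma2} to each summand in the explicit formulas for $R_1, R_3, R_5, \widetilde{R}_1, \widetilde{R}_2, R$. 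The terms that contain $z_{\alpha t}$ and $\theta_t$ are controlled by differentiating \eqref{zt} and by using \eqref{evolution of theta} together with the already-established bounds on $\bm{m}$, $H(\gamma_\alpha)$, and $V_W$.

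The main obstacle is the regularity bookkeeping. Because $\gamma$ lies only in $H^{s-3/2}$, a half-derivative less than $\theta\in H^{s-1}$, every term that naively would call for $\gamma$ in $H^{s-1}$ must either exhibit commutator smoothing (as in $[H,1/z_\alpha^2]$, which transfers derivatives onto the symbol) or be handled by interpolation via Lemma \ref{lemma1}. The most delicate bound is \eqref{TildeR1}, which deliberately allows the weaker $\|\gamma\|_{s-1}$ on the right-hand side: the summands inside $\widetilde{R}_1$ such as $\theta_\alpha\gamma\gamma_\alpha/L^3$ and $[H,H\gamma_\alpha]\gamma\theta_\alpha$ are genuinely borderline and do not enjoy the half-derivative gain available to the other $R$-terms, which is why this estimate appears with a weaker right-hand side and an $H^{s-2}$ left-hand side. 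Ensuring that this loss of regularity is localized to $\widetilde{R}_1$ alone, and not to $\widetilde{R}_2$ or $R$, is the part of the argument that requires the sharpest use of the commutator estimate of Lemma \ref{lemma7}.
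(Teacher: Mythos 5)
Your proposal follows the same route as the paper's proof: establish \eqref{t}, \eqref{n} from Lemma \ref{lemma3}; bound $\bm{m}$ term-by-term via Lemmas \ref{lemma2}, \ref{lemma5}, \ref{lemma7} together with Lemma \ref{lemmazazd}; then propagate through $\bm{W}$, $U$, $L_t$, $V_W$, $V$, $z_t$, and finally the $R$-terms. One small divergence: for \eqref{Wpri} the paper uses the simpler two-term decomposition $\mathcal{C}(\bm{W})^* = K[z_d]\gamma + \tfrac{1}{2i}H(\gamma/z_\alpha)$ and bounds the second summand directly by the Sobolev algebra (no commutator extraction is needed), whereas you propose pulling out $[H,1/z_\alpha]$ as in \eqref{w_alpha}; both give the same conclusion. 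Your diagnosis of why \eqref{TildeR1} carries the weaker $\|\gamma\|_{s-1}$ on the right-hand side matches the paper's remark that $\widetilde{R}_1$ has $\gamma_\alpha$ and $\theta_\alpha$ as its highest-derivative terms, though note the paper does not actually invoke interpolation (Lemma \ref{lemma1}) anywhere in this proof: the lost half-derivative is simply absorbed into the explicitly weaker statement of \eqref{TildeR1}, not recovered by interpolation.
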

\begin{proof}
We will use Lemma \ref{lemmazazd} in the following when we bound $z_\alpha$ and $z_d$.
 The estimates  \eqref{t} and \eqref{n} follow immediately from  \eqref{tnDef}, together with the standard composition estimate, Lemma \ref{lemma3}.
By the definition of $\bm{m}$ and Lemma \ref{lemma2}, Lemma \ref{lemma5}, and Lemma \ref{lemma7},
we have for $s-3/2\le i\le s,$
\begin{align*}
\|\bm{m}\|_i&\le\|z_\alpha \|_i
\left\| K[z_d]\left(\frac{\gamma}{z_\alpha}\right)_\alpha\right\|_i
+\left\|\frac{z_\alpha}{2i}\right\|_i
\left\|\left[H,\frac{1}{(z_\alpha)^2}\right]
\left(z_\alpha\left(\frac{\gamma}{z_\alpha}\right)_\alpha\right)\right\|_i\\
&\le C_1\|z_\alpha \|_i
\left\|\left(\frac{\gamma}{z_\alpha}\right)_\alpha\right\|_1
\exp\{C_2\|z_d\|_{i+1}\}
+\left\|\frac{z_\alpha}{2i}\right\|_i
\left\|\frac{1}{(z_\alpha)^2}\right\|_i
\left\|z_\alpha\left(\frac{\gamma}{z_\alpha}\right)_\alpha\right\|_{1}\\
&\le C_1\|\gamma\|_{s-3/2}\exp(C_2\|\theta\|_i).
\end{align*}
By the definition of $\bm{W}$, and adding and subtracting, we rewrite it as
\begin{align}\nonumber
\mathcal{C}(\bm{W})^*=K[z_d]\gamma
+\frac{1}{2i}H\left(\frac{\gamma}{z_\alpha}\right).
\end{align}
Lemma \ref{lemma5} provides a bound for $K[z_{d}]:$
\begin{multline}\label{WaPriori}
\|\bm{W}\|_{s-3/2}\le \|K[z_d]\gamma\|_{s-3/2}
+\left\|\frac{1}{2i}H\left(\frac{\gamma}{z_\alpha}\right)\right\|_{s-3/2}\\
\le C_1\|\gamma\|_{s-3/2}\exp(C_2\|\theta\|_{s-1}).
\end{multline}
The estimate \eqref{U} immediately follows from \eqref{UDef} and the bounds  \eqref{n} and \eqref{WaPriori}.
The estimate \eqref{Lt} follows from \eqref{LtDef}, \eqref{U} and the Schwartz inequality.
The estimate on $V_W$ readily follows from its definition \eqref{V_Wep} and the bound on $\bm{m}$.
To estimate $V$, we first consider $V_0(t)$. By the definition \eqref{V0Def}, $\left|V_0(t)\right|$ is bounded from \eqref{t} and \eqref{Wpri} and the Schwartz inequality.
 Then the estimate on $V$  is straightforward  from  its definition \eqref{VDef} and the bound on $U.$
The estimate \eqref{zt} is obtained easily from the definition $z_t=\mathcal{C}(U\bm{n}+V\bm{t})$ and previous bounds.

To get the estimate on $R_1$ and $R$, we first obtain bounds on terms which involve the operator $K[z_d]$ and various Hilbert commutators. These are all smoothing enough for sufficiently large $s$ by Lemma \ref{lemma5} and Lemma \ref{lemma7}. So, all the terms comprising $R_1$ and $R$ are bounded in $H^{s-3/2}$ from Lemma \ref{lemma2} and all previous estimates. This demonstrates the inequalities \eqref{R1} and \eqref{R}.
To get the estimate on $R_3,$ $R_5,$ and $\tilde{R}_2$, we again first obtain bounds on terms which involve the operator $K[z_d]$ and various Hilbert commutators, which again implies  \eqref{R3}, \eqref{R5}, and \eqref{TildeR2}.
We have defined $\tilde{R}_1$ in \eqref{TildeR1Def}, and it contains as
the highest derivative terms $\gamma_\alpha$ and $\theta_{\alpha}$.
So the inequality \eqref{TildeR1} follows from Lemma \ref{lemma2} and all previous estimates.
\end{proof}

\section{Uniform Time of Existence}\label{uniformSection}
In order to take the limit as $\sigma$ and $\rho_{0}$ vanish,
we must demonstrate that solutions exist on a time interval independent of these parameters.
We therefore revisit the energy estimates of the authors in \cite{liu2017well} to this end.  Throughout this section,
we consider the parameters defining $\mathcal{O}$ as well as
$\rho_{1}$ and $\rho_{2}$ to be fixed.

\begin{theorem}\label{uniform_time}
 Let $(\theta_0,\gamma_0)\in \mathcal{O}$ be given, with $\llangle \sin(\theta_0)\rrangle=0$.
Assume that Assumption \ref{solvabilityAssumption} holds.  Then
there exists $T>0$ such that for any $\rho_{0}\in[0,\rho^{*}]$ and $\sigma\in(0,\sigma^{*}],$ 
the  solution of Cauchy problem \eqref{evolution of theta}, \eqref{gammaend} with initial conditions $(\theta_0,\gamma_0)$ exists and is
 $(\theta^{\sigma,\rho_{0}},\gamma^{\sigma,\rho_{0}})\in C([0,T],\mathcal{O})$. Furthermore there exist constants $c_1\in(0,\infty),c_2\in(0,1)$ and $c_3\in(0,\infty)$ , depending only on $ s, \bar{d_1},\bar{d_2},\bar{d}_3,\|\theta_0\|_s$ and $\|\gamma_0\|_{s-1}$, such that
 \begin{equation}\nonumber
 \sigma\|\theta^{\sigma,\rho_{0}}\|_s^2+\|\theta^{\sigma,\rho_{0}}\|_{s-1}^2+\|\gamma^{\sigma,\rho_{0}}\|_{s-3/2}^2+\rho_0\|\gamma^{\sigma,\rho_{0}}\|_{s-1}^2\le-c_1\ln(c_2-c_3t).
 \end{equation}
\end{theorem}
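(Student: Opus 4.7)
The plan is to derive an a priori differential inequality $\dot E \le C_1\exp(C_2 E)$ for a weighted energy $E$ that is equivalent (up to uniform constants) to the target quantity $\mathcal{E}:=\sigma\|\theta\|_s^2+\|\theta\|_{s-1}^2+\|\gamma\|_{s-3/2}^2+\rho_0\|\gamma\|_{s-1}^2$, and then integrate to obtain the stated logarithmic bound. Local existence for each fixed $(\sigma,\rho_0)$ is already available from \cite{liu2017well}; the uniform estimate combined with a standard continuation argument yields a time interval $[0,T]$ whose length depends only on the initial data and the parameters defining $\mathcal{O}$, not on $(\sigma,\rho_0)$.

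I take the energy
$$E(t)=\|\theta\|_{s-1}^2+\mu_3\,\sigma\|\theta\|_s^2+\mu_1\|\gamma\|_{s-3/2}^2+\mu_4\,\rho_0\|\gamma\|_{s-1}^2,$$
with positive weights $\mu_1,\mu_3,\mu_4$ that depend only on $L(t)$ and on the physical constants $\rho_1,\rho_2,\tau$; since $L\in[2\pi,\bar{d}_2]$ on $\mathcal{O}$, the weights are bounded above and below uniformly in $(\sigma,\rho_0)$, so $E\sim\mathcal{E}$. They are fixed to enforce three cancellation constraints. First, $\mu_4:=\mu_1\cdot 4\pi^2\widetilde{A}/L^2$ makes the combination $\mu_1\|\gamma\|_{s-3/2}^2+\mu_4\,\rho_0\|\gamma\|_{s-1}^2$ the quadratic form associated with $I+\tfrac{2\rho\widetilde{A}\pi}{L}H\partial_\alpha$, so that its time derivative, after pairing $\gamma_t$ with $\gamma$ and using \eqref{gammaend2}, collapses to $2\mu_1\mathrm{Re}\langle F,\gamma\rangle_{s-3/2}-2\mu_1\mathrm{Re}\langle T[\theta]\gamma_t,\gamma\rangle_{s-3/2}$ plus $\dot{\mu}_i$ corrections. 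Second, $\mu_1:=2\pi^2/(L^2\lambda)$ annihilates the order-$(2s-1)$ coupling between the $\tfrac{2\pi^2}{L^2}H\gamma_\alpha$ term in $\theta_t$ (paired in $\|\theta\|_{s-1}^2$) and the $\lambda\theta_{\alpha\alpha}$ term in $F$ (paired in $\mu_1\|\gamma\|_{s-3/2}^2$). Third, $\mu_3:=\bar{A}/\lambda$ annihilates the order-$(2s+1)$ coupling between the same $H\gamma_\alpha$ contribution (now paired in $\mu_3\,\sigma\|\theta\|_s^2$) and the bending term $-\sigma\bar{A}\partial_\alpha^4\theta$ in $F$.

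With the top-order cancellations in place, what remains of $\dot E$ is a sum of lower-order pieces: (a) the $\dot\mu_i$ corrections, controlled using \eqref{Lt}; (b) the term $-2\mu_1\mathrm{Re}\langle T[\theta]\gamma_t,\gamma\rangle_{s-3/2}$, for which $\gamma_t$ is bounded in $H^0$ by inverting $I+\tfrac{2\rho\widetilde{A}\pi}{L}H\partial_\alpha+T[\theta]$ via Assumption \ref{solvabilityAssumption} and then invoking Lemma \ref{Tbound}; and (c) the non-leading contributions to $\mathrm{Re}\langle F,\gamma\rangle_{s-3/2}$, namely the cubic correction $\tfrac{3}{2}\sigma\bar{A}\theta_\alpha^2\theta_{\alpha\alpha}$, the term $\tfrac{4\pi\rho\widetilde{A}}{L}V_W^2\theta_{\alpha\alpha}$, the transport contribution $(\tfrac{2\pi V_W}{L}-\tfrac{4A\pi^2}{L^2}\gamma)\gamma_\alpha$, the mass-damping term $-\tfrac{4\rho\widetilde{A}V_W\pi^2}{L^2}H(\gamma_{\alpha\alpha})$, and the remainders $\rho\widetilde{R}_1,\rho\widetilde{R}_2,R$. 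Each is estimated via Lemma \ref{priori}, yielding a bound of the form $C_1\exp(C_2\|\theta\|_{s-1})\cdot P(\|\gamma\|_{s-3/2},\sigma^{1/2}\|\theta\|_s,\rho_0^{1/2}\|\gamma\|_{s-1})$, dominated by $C_1\exp(C_2 E)$.

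Integrating the resulting inequality yields $E(t)\le -\tfrac{1}{C_2}\ln(e^{-C_2 E(0)}-C_1 C_2 t)$, which matches the stated bound with $c_1=1/C_2$, $c_2=e^{-C_2 E(0)}\in(0,1)$, $c_3=C_1 C_2$; combined with local well-posedness from \cite{liu2017well} and continuation, the bound is preserved while $(\theta,\gamma)\in\mathcal{O}$, producing a uniform interval $[0,T]$ for all $(\sigma,\rho_0)\in[0,\rho^*]\times(0,\sigma^*]$. The principal obstacle is the dual-order cancellation: surface tension and bending contribute at different derivative levels ($2s-1$ and $2s+1$), and both must be absorbed by a single weight $\mu_1$ on $\|\gamma\|_{s-3/2}^2$; this is possible only because the free parameter $\mu_3$ on $\sigma\|\theta\|_s^2$ provides a second dial, and because the mass contribution is handled by the compatible choice $\mu_4=\mu_1\cdot 4\pi^2\widetilde{A}/L^2$. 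Verifying the mutual compatibility of these three algebraic constraints, and then extracting uniform-in-$(\sigma,\rho_0)$ estimates on every remainder in \eqref{FDef} and in the $T[\theta]\gamma_t$ pairing, is the structural heart of the proof; everything else is a routine application of Lemma \ref{priori} and Lemma \ref{Tbound}.
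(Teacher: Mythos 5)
Your overall strategy—an energy functional with $(\sigma,\rho_0)$-weighted Sobolev norms chosen so that the top-order bending, capillary, and mass contributions cancel pairwise, followed by a Gronwall-type argument yielding a logarithmic bound—matches the paper's. The three constraints you impose on $\mu_1,\mu_3,\mu_4$ correspond precisely to the normalizations of $E_1,E_2,E_3,E_4$ in the paper, and the cancellation of the order-$(2s+1)$ bending coupling and the order-$(2s-1)$ capillary coupling is correctly identified.

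However, there is a genuine gap in item (c), where you classify $\tfrac32\sigma\bar A\,\theta_\alpha^2\theta_{\alpha\alpha}$ and $\tfrac{4\pi\rho\widetilde A}{L}V_W^2\theta_{\alpha\alpha}$ as ``non-leading'' terms in $\langle F,\gamma\rangle_{s-3/2}$ amenable to Lemma~\ref{priori}. After the constant-coefficient cancellations, these produce contributions of the form
$\displaystyle\sigma\int\theta_\alpha^2\,(\partial_\alpha^s\theta)\,(H\partial_\alpha^{s-1}\gamma)\,d\alpha$
and
$\displaystyle\rho\int V_W^2\,(\partial_\alpha^s\theta)\,(H\partial_\alpha^{s-1}\gamma)\,d\alpha$,
which are still top-order: the first is bounded only by $\sigma\|\theta\|_s\|\gamma\|_{s-1}\le\sqrt{\sigma/\rho_0}\,\bigl(\sqrt\sigma\|\theta\|_s\bigr)\bigl(\sqrt{\rho_0}\|\gamma\|_{s-1}\bigr)$, and the ratio $\sqrt{\sigma/\rho_0}$ is \emph{not} uniformly bounded as both parameters go to zero independently; the second has the same problem with the reciprocal ratio. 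Your weights $\mu_1,\mu_3,\mu_4$ are constrained to depend only on $L(t)$ and the physical constants, so they are spatially constant along the interface and cannot absorb the $\alpha$-dependent coefficients $\theta_\alpha^2$ and $V_W^2$. This is exactly the obstruction the paper confronts at equation~\eqref{dE123}, and it is why the paper's energy is \emph{not} a weighted Sobolev norm: it adds the auxiliary functionals $E_5$, $E_6$, $E_7$ whose integrands carry the variable weights $\sqrt{(\theta_\alpha^2+1)/2}$, $(\theta_\alpha^2+1)$, and $V_W^2$, respectively. Differentiating these in time produces the counter-terms that cancel the two remaining variable-coefficient top-order pairings (together with a further lower-order mass correction between $E_5$ and $E_6$). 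Without some analogue of $E_5$, $E_6$, $E_7$, the inequality $\dot E\le C_1\exp(C_2 E)$ with constants uniform in $(\sigma,\rho_0)$ does not follow from the argument you outline.
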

\begin{proof} For simplicity, in the present proof we will drop the superscripts, and refer to $(\theta^{\sigma,\rho_{0}},\gamma^{\sigma,\rho_{0}})$ as $(\theta,\gamma).$
We define an energy functional $E$  as
\begin{align}
\nonumber&E(t)=E_0(t)+\sigma E_1(t)+E_{2}(t)+E_{3}(t)+\rho E_{4}(t),
\end{align}
where  the definitions of $E_0$, $E_{1},$ $E_{2},$ $E_{3},$ and $E_{4}$ are
\begin{align}
\nonumber&E_0(t)=\frac12\int_0^{2\pi}(\theta)^2+(\gamma)^2\ d\alpha,\\
\nonumber&E_1(t)=\frac{L^2 \bar{A}}{4\pi^2 }\int_0^{2\pi}(\partial_\alpha^s\theta)^2\ d\alpha,\\
\nonumber&E_2(t)=\frac12\int_0^{2\pi}(\partial^{s-2}_\alpha \gamma)( H\partial_\alpha^{s-1} \gamma)\ d\alpha,\\
\nonumber&E_{3}(t)=\left(\frac{\tau L}{\pi(\rho_1+\rho_2)}+\frac{\sigma \bar{A} L^2}{8\pi^2}\right)\int_0^{2\pi}(\partial_\alpha^{s-1}\theta)^2\ d\alpha,\\
\nonumber&E_{4}(t)=\frac{\pi \widetilde{A}}{L}\int_0^{2\pi}(H\partial_\alpha^{s-1} \gamma)^2\ d\alpha.
\end{align}
Recalling that $\rho=\rho_{0}/s_{\alpha},$ we see that the terms $\sigma E_{1}$ and $\rho E_{4}$ are tracking leading-order contributions related to
the bending and mass of the filament.
We have that $\sigma\bar{A}\ge0$ and $\widetilde{A}\ge0$  and $L$ is non-negative as well as bounded above and below.
Therefore we see that the energy is non-negative for all $t.$

We take the time derivative of $E_0$:
\[
\frac{dE_0}{dt}=\int_0^{2\pi} \theta\theta_t+\gamma\gamma_t \ d\alpha.
\]
Since the Sobolev index 
$s$ is sufficiently largely, it is immediate, from the evolution equations \eqref{evolution of theta} and \eqref{gammaend}, and related equations, as well as the estimates from Lemma \ref{priori}, that the following inequality holds:
\begin{equation}\nonumber
\frac{dE_0}{dt}\le C_1 \exp(C_2 E).
\end{equation}

We next take the time derivative of $E_1$:
\begin{equation}\label{de1}
\frac{dE_1}{dt}=\frac{(L^2 \bar{A})_t}{4\pi^2 }\int_0^{2\pi}(\partial_\alpha^s\theta)^2\ d\alpha+
\frac{L^2 \bar{A}}{2\pi^2 }\int_0^{2\pi}(\partial_\alpha^s\theta)
(\partial_\alpha^s\theta_t)\ d\alpha.
\end{equation}
 To proceed with estimating \eqref{de1}, we will focus on the second term on the right-hand side.  Applying $\partial_\alpha^s$ to
\eqref{evolution of theta}, we get
\begin{align}\label{thetats}
\partial_\alpha^s\theta_t
= \frac{2\pi^2}{L^2} H(\partial_\alpha^{s+1}\gamma)
+\frac{2\pi V_W}{L}(\partial_\alpha^{s+1}\theta)+\frac{2\pi\theta_\alpha}{L}(\partial_\alpha^{s}V_W)
+\Phi_1,
\end{align}
where $\Phi_1$ is given by the formula
\[
\Phi_1= \frac{2\pi}{L}\sum_{j=1}^{s-1}\binom{s}{j}
  (\partial_{\alpha}^jV_W)(\partial_\alpha^{s+1-j}\theta)
 +\frac{2\pi}{L}\partial_\alpha^s(\bm{m}\cdot \bm{n}).
 \]
We plug \eqref{thetats} into \eqref{de1}, and multiply by $\sigma,$ finding
\begin{multline}\label{de12}
\frac{d(\sigma E_1)}{dt}=\frac{(L^2 \sigma\bar{A})_t}{4\pi^2 }\int_0^{2\pi}(\partial_\alpha^s\theta)^2
\ d\alpha+
 \sigma\bar{A}\int_0^{2\pi}(\partial_\alpha^s\theta)\left( H\partial_\alpha^{s+1}\gamma\right)\ d\alpha\\
 +\frac{L \sigma\bar{A}}{\pi }\int_0^{2\pi}( \partial_\alpha^s\theta)
V_W(\partial_\alpha^{s+1}\theta)
\ d\alpha+\frac{L \sigma\bar{A}}{\pi }\int_0^{2\pi}( \partial_\alpha^s\theta)
(\partial_\alpha^{s}V_W)\theta_\alpha
\ d\alpha
\\
+\frac{L^{2} \sigma\bar{A}}{2\pi^{2}}
\int_0^{2\pi}(\partial_\alpha^s\theta)\Phi_1  \ d\alpha.
\end{multline}
 For the last term on the right-hand side of \eqref{de12}, all of the summands comprising $\Phi$
 involve at most $s$ derivatives of $\sigma\theta$ and $\sigma V_W$. Therefore, the estimates of Lemma \ref{priori} immediately imply  that $ \left|\frac{(L)^{2} \sigma\bar{A}}{2\pi^{2}}
\int_0^{2\pi}\partial_\alpha^s\theta\Phi_1  d\alpha\right|\le C_1\exp(C_2 E)$.
For the fourth term on the right-hand side of \eqref{de12}, applying $\partial_\alpha^s$ to $V_W$ using \eqref{V_Wep}, and extracting the leading-order term, we have
\begin{multline}\nonumber
\partial_\alpha^{s}V_W=\frac{\pi}{L}\theta_\alpha H(\partial_{\alpha}^{s-1}\gamma)+\frac{\pi}{L}[H,\theta_\alpha]\partial_{\alpha}^{s-1}\gamma
\\
+\sum_{j=0}^{s-2}\binom{s-1}{j}H((\partial_\alpha^j\gamma)(\partial_\alpha^{s-j}\theta))-\partial_\alpha^{s-1}(\bm{m}\cdot\bm{t}).
\end{multline}
Using this, and further rearranging terms, we have
\begin{multline}\label{de13}
\frac{d(\sigma E_1)}{dt}=
 \sigma\bar{A}\int_0^{2\pi}(\partial_\alpha^s\theta)\left( H\partial_\alpha^{s+1}\gamma\right)\ d\alpha
 \\
 +\sigma\bar{A}\int_0^{2\pi}(\partial_\alpha^s\theta)\theta_\alpha^2\left( H\partial_\alpha^{s-1}\gamma\right)\ d\alpha+\Psi_1,
\end{multline}
where $\Psi_1$ is given by
\begin{multline}\nonumber
\Psi_1=\frac{(L^2 \sigma\bar{A})_t}{4\pi^2 }\int_0^{2\pi}(\partial_\alpha^s\theta)^2\ d\alpha
-\frac{L \sigma\bar{A}}{2\pi }\int_0^{2\pi}( \partial_\alpha^s\theta )^2 (\partial_\alpha V_W)\ d\alpha
\\
+\frac{L^{2} \sigma\bar{A}}{2\pi^{2}}\int_0^{2\pi}(\partial_\alpha^s\theta)\Phi_1 \ d\alpha
+\sigma\bar{A}\int_{0}^{2\pi}(\partial_{\alpha}^{s}\theta)[H,\theta_\alpha]\partial_{\alpha}^{s-1}\gamma\ d\alpha
\\
+\frac{L\sigma\bar{A}}{\pi}\sum_{j=0}^{s-2}\binom{s-1}{j}\int_{0}^{2\pi}(\partial_{\alpha}^{s}\theta)
H((\partial_\alpha^j\gamma)(\partial_\alpha^{s-j}\theta))\ d\alpha
\\
-\frac{L\sigma\bar{A}}{\pi}\int_{0}^{2\pi}(\partial_{\alpha}^{s}\theta)(\partial_\alpha^{s-1}(\bm{m}\cdot\bm{t}))\ d\alpha.
\end{multline}
We have an immediate estimate for $\Psi_{1},$ namely
\begin{align}\nonumber
 |\Psi_1|\le C_1\exp(C_2 E).
\end{align}
Arguing in the same manner, we are also able to bound the growth of $E_{3}$ as
\begin{multline}\label{dE1prime}
\frac{dE_{3}}{dt}\le\lambda\int_0^{2\pi}(\partial_\alpha^{s-1}\theta)\left( H\partial_\alpha^{s}\gamma\right)\ d\alpha
\\
-\frac{\sigma \bar{A}}{2}\int_0^{2\pi}(\partial_\alpha^{s}\theta)\left( H\partial_\alpha^{s-1}\gamma\right)\ d\alpha+C_1\exp(C_2 E),
\end{multline}
where we recall the definition $\lambda=\displaystyle\frac{4\tau\pi}{L(\rho_{1}+\rho_{2})}.$

Before we compute $ \frac{d E_2}{dt},$ we apply $\partial_{\alpha}^{s-2}$ to \eqref{gammaend}, finding
\begin{multline}\nonumber
\partial_\alpha^{s-2}\gamma_t=-\sigma\bar{A} \partial_\alpha^{s+2}\theta -\frac{3\sigma\bar{A}\theta_\alpha^2}{2}\partial_\alpha^s\theta+\frac{4\pi\rho\widetilde{A} V_W ^2}{L}\partial_\alpha^s\theta+\lambda\partial_\alpha^s\theta
\\
+\left(\frac{2\pi V_W}{L}-\frac{4A\pi^2}{L^2}\gamma\right)\partial_\alpha^{s-1}\gamma-\frac{2\rho\widetilde{A}\pi}{L}H(\partial_\alpha^{s-1}\gamma_t)
-\frac{4\rho\widetilde{A}V_W\pi^2}{L^2} H(\partial_\alpha^s\gamma)
\\
-\frac{4(s-2)\rho\widetilde{A}(\partial_\alpha V_W)\pi^2}{L^2} H(\partial_\alpha^{s-1}\gamma)
+\rho\partial^{s-2}_\alpha\widetilde{R}_1+\Phi_2+\rho\tilde{\Phi}_2.
\end{multline}
Here, $\Phi_2$  and $\tilde{\Phi}_2$ are given by the formulas
\begin{multline}\nonumber
\Phi_2=\partial_\alpha^{s-2}\left(-2AJ[z_d]\gamma_t+R\right)
\\
+\sum_{j=0}^{s-3}\binom{s-2}{j}(\partial_\alpha^{j+2}\theta)\partial_{\alpha}^{s-2-j}\left(\frac{-3\sigma\bar{A}\theta_\alpha^2}{2}+\frac{4\pi\rho\widetilde{A}V_W^2}{L}\right)
\\
+\sum_{j=0}^{s-3}\binom{s-2}{j}(\partial_\alpha^{j+1}\gamma)
\partial_{\alpha}^{s-2-j}\left(\frac{2\pi V_W}{L}-\frac{4A\pi^2}{L^2}\gamma\right),
\end{multline}
\begin{multline}\nonumber
\rho\tilde{\Phi}_2=\partial_\alpha^{s-2}\left(2\rho\left(S[z_{d}](\gamma_t)-\frac{2\pi\widetilde{A}\theta_\alpha}{L}J[z_d]\gamma_t\right)+\rho\widetilde{R}_2\right)
\\
-\sum_{j=0}^{s-4}\binom{s-2}{j}\frac{4\rho\widetilde{A}(\partial^{s-2-j}_\alpha V_W)\pi^2}{L^2} H(\partial_\alpha^{j+2}\gamma).
\end{multline}
We remark that the distinction between these two groupings is that $\tilde{\Phi}_{2}$ is for terms proportional to $\rho.$

We notice that it is straightfoward to conclude that $\gamma_t$ is bounded in $H^0$ when $s\ge 3.$  Then we see that
the collection of terms
$\Phi_2$ involves the smoothing operator $J[z_d]$ applied to $\gamma_{t},$ as well as $R,$ and other terms involving
at most $s-1$ derivatives of $\theta$ and at most $s-2$ derivatives of $\gamma.$
We conclude the bound $\|\Phi_2\|_{1/2}\le C_1\exp(C_2 E)$.
Similarly, the collection of terms $\tilde{\Phi}_2$ involves the smoothing operators $S$ and $\mathcal{J}$ applied to $\gamma_{t},$
the term $\widetilde{R}_{2},$
as well as terms involving at most $s-1$ derivatives of $\theta$ and at most $s-2$ derivatives of $\gamma$,
so we may conclude the estimate $\|\tilde{\Phi}_2\|_{0}\le \rho C_1\exp(C_2 E)$.
Note that we have used Lemma \ref{priori} to estimate the quantities $R$ and $\widetilde{R}_{2},$ and we have
used Lemma \ref{Tbound} to estimate the operators $S$ and $\mathcal{J}.$

We are now in a position to compute $dE_{2}/dt,$ expanding as follows:
\begin{multline}\label{de2}
\frac{dE_{2}}{dt}=
-\sigma\bar{A} \int_0^{2\pi} \left(H\partial_\alpha^{s-1} \gamma\right)\partial_\alpha^{s+2}\theta \ d\alpha
-\int_0^{2\pi} \left(H\partial_\alpha^{s-1} \gamma\right)\frac{3\sigma\bar{A}\theta_\alpha^2}{2}\partial_\alpha^s\theta\ d\alpha
\\
+\int_0^{2\pi} \frac{4\pi\rho\widetilde{A}V_W^2}{L}\left(H\partial_\alpha^{s-1} \gamma\right)\partial_\alpha^s\theta\ d\alpha
+\int_0^{2\pi} \left(H\partial_\alpha^{s-1} \gamma\right)\lambda\partial_\alpha^s\theta\ d\alpha
\\
+\int_0^{2\pi} \left(H\partial_\alpha^{s-1} \gamma\right)\left(\frac{2\pi V_W}{L}-\frac{4A\pi^2}{L^2}\gamma\right)\partial_\alpha^{s-1}\gamma\ d\alpha
\\
-\frac{2\rho\widetilde{A}\pi}{L}\int_0^{2\pi} \left(H\partial_\alpha^{s-1} \gamma\right)
H\partial_\alpha^{s-1}\gamma_{t}\ d\alpha
-\frac{4\rho\pi^{2}\widetilde{A}}{L^2}\int_0^{2\pi}\left( H\partial_\alpha^{s-1} \gamma\right)
V_W  H\partial_\alpha^{s} \gamma\ d\alpha
\\
-\frac{4(s-2)\rho\pi^{2}\widetilde{A}}{L^2}
\int_0^{2\pi}\left( H\partial_\alpha^{s-1} \gamma\right)^{2}
(\partial_\alpha V_W)\ d\alpha
\\
+\rho\int_0^{2\pi}(H\partial_\alpha^{s-1}\gamma)\tilde{R}_1 \ d\alpha
+\int_0^{2\pi}(\Lambda^{1/2}\partial_\alpha^{s-2}\gamma)\Lambda^{1/2}\Phi_2 \ d\alpha
+\rho\int_0^{2\pi}(H\partial_\alpha^{s-1}\gamma)\tilde{\Phi}_2 \ d\alpha.
\end{multline}
We will deal with the terms on the right-hand side of \eqref{de2} one by one.
We integrate by parts  in the first and the fourth terms on the right-hand side of \eqref{de2}, finding
\[
-\sigma\bar{A} \int_0^{2\pi} \left(H\partial_\alpha^{s-1} \gamma\right)
\partial_\alpha^{s+2}\theta  \ d\alpha
=-\sigma\bar{A} \int_0^{2\pi}
\left( H\partial_\alpha^{s+1} \gamma\right) \partial_\alpha^{s}\theta  \ d\alpha,
\]
\[
\int_0^{2\pi} \left(H\partial_\alpha^{s-1} \gamma\right)\lambda\partial_\alpha^s\theta d\alpha=-\int_0^{2\pi} \left(H\partial_\alpha^{s} \gamma\right)\lambda\partial_\alpha^{s-1}\theta d\alpha.
\]
Notice that these expressions are the same as the first term on the right-hand side of \eqref{de13} and the first term on the right-hand side of \eqref{dE1prime} with the opposite signs, respectively.

Using the skew-adjointedness property of the Hilbert transform, and then introducing a commutator,
we rewrite the fifth term on the right-hand side of \eqref{de2}:
\begin{multline}\nonumber
\int_0^{2\pi} \left(H\partial_\alpha^{s-1} \gamma\right)\left(\frac{2\pi V_W}{L}-\frac{4A\pi^2}{L^2}\gamma\right)\partial_\alpha^{s-1}\gamma \ d\alpha
\\
=-\int_0^{2\pi} \left(\partial_\alpha^{s-1} \gamma\right)H\left[\left(\frac{2\pi V_W}{L}-\frac{4A\pi^2}{L^2}\gamma\right)\partial_\alpha^{s-1}\gamma\right] \ d\alpha
\\
=-\int_0^{2\pi} \left(\partial_\alpha^{s-1} \gamma\right)\left[H,\left(\frac{2\pi V_W}{L}-\frac{4A\pi^2}{L^2}\gamma\right)\right]\partial_\alpha^{s-1}\gamma \ d\alpha
\\
-\int_0^{2\pi} \left(\partial_\alpha^{s-1} \gamma\right)\left(\frac{2\pi V_W}{L}-\frac{4A\pi^2}{L^2}\gamma\right)H\partial_\alpha^{s-1}\gamma \ d\alpha.
\end{multline}
Since the second term on the right-hand side is the same as the left-hand side but with opposite sign, these combine.
By Lemma \ref{lemma7}, we may then bound this as
\begin{multline}\nonumber
\left|-\frac{1}{2}\int_0^{2\pi} \left(\partial_\alpha^{s-1} \gamma\right)\left[H,\left(\frac{2\pi V_W}{L}-\frac{4A\pi^2}{L^2}\gamma\right)\right]\partial_\alpha^{s-1}\gamma\ d\alpha\right|
\\
\le
\frac12\|\gamma\|_{s-2}\left\|\left[H,\left(\frac{2\pi V_W}{L}-\frac{4A\pi^2}{L^2}\gamma\right)\right]\partial_\alpha^{s-1}\gamma\right\|_1
\\
\le C\|\gamma\|_{s-2}\left\|\frac{2\pi V_W}{L}-\frac{4A\pi^2}{L^2}\gamma\right\|_2\|\gamma\|_{s-2}\le  C_1\exp(C_2 E).
\end{multline}

Next, we rewrite the sixth term on the right-hand side of \eqref{de2}:
\begin{equation}\nonumber
-\frac{2\rho\widetilde{A}\pi}{L}\int_0^{2\pi} H\partial_\alpha^{s-1} \gamma
H\partial_\alpha^{s-1}\gamma_{ t}d\alpha
=-\frac{\rho\widetilde{A}\pi}{L}\frac{d}{dt}\int_0^{2\pi}\left( H\partial_\alpha^{s-1} \gamma\right)^2
\ d\alpha.
\end{equation}

We integrate by parts in the seventh term on the right-hand side of \eqref{de2}, and combine this with the eighth term on
the right-hand side, finding
\begin{multline}\nonumber
\Bigg|-\frac{4\rho\pi^{2}\widetilde{A}}{L^2}
\int_0^{2\pi}\left( H\partial_\alpha^{s-1} \gamma\right)
V_W  H\partial_\alpha^{s} \gamma \ d\alpha
\\
-\frac{4(s-2)\rho\pi^{2}\widetilde{A}}{L^2}
\int_0^{2\pi}\left( H\partial_\alpha^{s-1} \gamma\right)
\partial_\alpha V_W ( H\partial_\alpha^{s-1} \gamma)\ d\alpha\ \Bigg|
\\
=\left|\frac{(-4s+10)\rho\pi^{2}\widetilde{A}}{L^2}
\int_0^{2\pi}\left( H\partial_\alpha^{s-1} \gamma\right)^2\partial_\alpha V_W \ d\alpha\right|
\le  C_1\exp(C_2 E).
\end{multline}

Combining these calculations, we are able to conclude that
\begin{multline}\label{de22}
\frac{dE_{2}}{dt}=-
\sigma\bar{A} \int_0^{2\pi}\left( H\partial_\alpha^{s+1} \gamma\right) \partial_\alpha^{s}\theta  \ d\alpha
-\int_0^{2\pi} \left(H\partial_\alpha^{s-1} \gamma\right)\frac{3\sigma\bar{A}\theta_\alpha^2}{2}\partial_\alpha^s\theta \ d\alpha
\\
+\int_0^{2\pi} \frac{4\pi\rho\widetilde{A}V_W^2}{L}\left(H\partial_\alpha^{s-1} \gamma\right)\partial_\alpha^s\theta \ d\alpha
-\int_0^{2\pi} \left(H\partial_\alpha^{s} \gamma\right)\lambda\partial_\alpha^{s-1}\theta \ d\alpha
\\
-\frac{\rho\widetilde{A}\pi}{L}\frac{d}{dt}\int_0^{2\pi}\left( H\partial_\alpha^{s-1} \gamma\right)^2
\ d\alpha+\Psi_2+\Psi_3.
\end{multline}
The collection of terms $\Psi_{2}$ is defined as
\begin{multline*}
\Psi_2=\int_0^{2\pi} \left(H\partial_\alpha^{s-1} \gamma\right)\left(\frac{2\pi V_W}{L}-\frac{4A\pi^2}{L^2}\gamma\right)\partial_\alpha^{s-1}\gamma\ d\alpha
\\
+\int_0^{2\pi}(\Lambda^{1/2}\partial_\alpha^{s-2}\gamma)\Lambda^{1/2}\Phi_2\ d\alpha+\int_0^{2\pi}(H\partial_\alpha^{s-1}\gamma)\tilde{R}_1\ d\alpha
+\rho\int_0^{2\pi}(H\partial_\alpha^{s-1}\gamma)\tilde{\Phi}_2\ d\alpha,
\end{multline*}
and has the estimate $|\Psi_2|\le C_1\exp(C_2 E).$
The collection of terms $\Psi_{3}$ is proportional to $\rho,$ and is defined as
\begin{multline}\nonumber
\Psi_3=-\frac{4\rho\pi^{2}\widetilde{A}}{L^2}
\int_0^{2\pi}\left( H\partial_\alpha^{s-1} \gamma\right)
V_W  (H\partial_\alpha^{s} \gamma) \ d\alpha
\\
-\frac{4(s-2)\rho\pi^{2}\widetilde{A}}{L^2}
\int_0^{2\pi}\left( H\partial_\alpha^{s-1} \gamma\right)^{2}
(\partial_\alpha V_W)\ d\alpha,
\end{multline}
and has the estimate $|\Psi_3|\le  C_1\exp(C_2 E).$

Next, we compute $\frac{d(\rho E_4)}{dt}:$
\begin{align}\label{de3}
\frac{d(\rho E_4)}{dt}=-\frac{d}{dt}\left(\frac{\rho\pi \widetilde{A}}{L}\right)\int_0^{2\pi}(H\partial_\alpha^{s-1} \gamma)^2 \ d\alpha
+\frac{\rho\pi \widetilde{A}}{L}\frac{d}{dt}\int_0^{2\pi}(H\partial_\alpha^{s-1} \gamma)^2 \ d\alpha.
\end{align}
When adding, \eqref{de3} will provide a cancellation with \eqref{de22}.
Now we  add \eqref{de13}, \eqref{dE1prime}, \eqref{de22}, and \eqref{de3},  concluding that
\begin{multline}\label{dE123}
\frac{d(\sigma E_1)}{dt}+\frac{dE_{2}}{dt}+\frac{dE_{3}}{dt}+\frac{d(\rho E_{4})}{dt}\\
=-\int_0^{2\pi} \left(H\partial_\alpha^{s-1} \gamma\right)\frac{\sigma\bar{A}(\theta_\alpha^2+1)}{2}\partial_\alpha^s\theta \ d\alpha
+\int_0^{2\pi} \frac{4\pi\rho\widetilde{A}V_W^2}{L}\left(H\partial_\alpha^{s-1} \gamma\right)\partial_\alpha^s\theta \ d\alpha\\
-\frac{d}{dt}\left(\frac{\rho\pi \widetilde{A}}{L}\right)
\int_0^{2\pi}(H\partial_\alpha^{s-1} \gamma)^2 \ d\alpha+\Psi_1+\Psi_2+\Psi_3
+C_{1}\exp(C_{2}E).
\end{multline}
We note that the terms which contain $L_{t}$  are immediately
bounded, using the definition of the energy and Lemma \ref{priori}, thus
\[
\left|-\frac{d}{dt}\left(\frac{\rho\pi \widetilde{A}}{L}\right)\int_0^{2\pi}(H\partial_\alpha^{s-1} \gamma)^2 \ d\alpha+\Psi_1+\Psi_2+\Psi_3\right|
\le C_1\exp(C_2E),
\]
where $C_1,C_2$ are independent of $\rho_0$ and $\sigma.$

To cancel the first  two terms of the right-hand side of \eqref{dE123}, we will use $E_{5},$ $E_{6}$ and $E_{7},$ defined as
\begin{align}\nonumber
E_{5}=\frac12\int_0^{2\pi}\sqrt{(\theta_\alpha^2+1)/2}(\partial_\alpha^{s-3}\gamma)\Lambda\left(\sqrt{(\theta_\alpha^2+1)/2}(\partial_\alpha^{s-3}\gamma)\right)\ d\alpha,
\end{align}
\begin{align}\nonumber
E_{6}=\int_0^{2\pi}\left(\frac{(\theta_\alpha^2+1)\widetilde{A}\pi}{2L}\right)(H\partial_\alpha^{s-2}\gamma)^2\ d\alpha,
\end{align}
\begin{align}\nonumber
E_{7}=\frac{ L\tilde{A}}{\pi}\int_0^{2\pi}V_W^2(\partial_\alpha^{s-1}\theta)^2\ d\alpha.
\end{align}

We proceed by computing $dE_{5}/dt$:
\begin{multline}\nonumber
\frac{dE_{5}}{dt}=\int_0^{2\pi}\frac{\partial_t\sqrt{(\theta_\alpha^2+1)/2}}{2\sqrt{(\theta_\alpha^2+1)/2}}(\partial_\alpha^{s-3}\gamma)\Lambda\left(\sqrt{(\theta_\alpha^2+1)/2}(\partial_\alpha^{s-3}\gamma)\right)\ d\alpha
\\
+\int_0^{2\pi}\sqrt{(\theta_\alpha^2+1)/2}(\partial_\alpha^{s-3}\gamma_t)\Lambda\left(\sqrt{(\theta_\alpha^2+1)/2}(\partial_\alpha^{s-3}\gamma)\right)\ d\alpha\\
\le\int_0^{2\pi}\sqrt{(\theta_\alpha^2+1)/2}(\partial_\alpha^{s-3}\gamma_t)\Lambda\left(\sqrt{(\theta_\alpha^2+1)/2}(\partial_\alpha^{s-3}\gamma)\right)\ d\alpha+C_1\exp(C_2E).
\end{multline}
To treat this, we first expand $\partial_{\alpha}^{s-3}\gamma_{t}$ in the second term on the right-hand side.  We write this as
\begin{equation}\nonumber
\partial_\alpha^{s-3}\gamma_t=-\sigma\bar{A} \partial_\alpha^{s+1}\theta -\frac{2\rho\widetilde{A}\pi}{L}H(\partial_\alpha^{s-2}\gamma_t)
-\frac{4\rho\widetilde{A}V_W\pi^2}{L^2} H(\partial_\alpha^{s-1}\gamma)+\Phi_3
\end{equation}
where
\begin{multline}\nonumber
\Phi_3=\partial_\alpha^{s-3}\left(-2AJ[z_d]\gamma_t+2\rho\left(S(\gamma_t)-\frac{2\pi\widetilde{A}\theta_\alpha}{L}J[z_d]\gamma_t\right)
+\rho\widetilde{R}_1+\rho\widetilde{R}_2+R\right)\\
+\partial_\alpha^{s-3}\left(\left(-\frac{3\sigma\bar{A}\theta_\alpha^2}{2}+\lambda+\frac{4\pi \rho\widetilde{A}V_W^2}{L}\right)\partial_\alpha^2\theta +\left(\frac{2\pi V_W}{L}-\frac{4A\pi^2}{L^2}\gamma\right)\partial_\alpha\gamma\right)
\\
-\sum_{j=0}^{s-4}\binom{s}{j}\frac{4\rho\widetilde{A}\partial^{s-2-j}_\alpha V_W\pi^2}{L^2} H(\partial_\alpha^{j+2}\gamma).
\end{multline}
We notice that $\gamma_t$ is well-defined by Assumption \ref{solvabilityAssumption}, and the terms here involving $\gamma_{t}$ are
bounded in $H^0$ when $s\ge 3$ since the operators $S$ and $J[z_d]$ are smoothing.
The collection $\Phi_3$ involves at most $s-1$ derivatives of $\theta$ and $s-2$ derivatives of $\gamma$, so $\|\Phi_3\|_{0}\le C_1\exp(C_2 E)$.
Next, we expand $\Lambda\left(\sqrt{(\theta_\alpha^2+1)/2}(\partial_\alpha^{s-3}\gamma)\right),$ writing it as
\begin{multline}\nonumber
\Lambda\left(\sqrt{(\theta_\alpha^2+1)/2}\partial_\alpha^{s-3}\gamma\right)\\
=H\left((\sqrt{(\theta_\alpha^2+1)/2})_\alpha\partial_\alpha^{s-3}\gamma\right)
+H\left(\sqrt{(\theta_\alpha^2+1)/2}\partial_\alpha^{s-2}\gamma\right)
\\
=H\left((\sqrt{(\theta_\alpha^2+1)/2})_\alpha\partial_\alpha^{s-3}\gamma\right)
+\left[H,\sqrt{(\theta_\alpha^2+1)/2}\right]\partial_\alpha^{s-2}\gamma
\\
+\sqrt{(\theta_\alpha^2+1)/2}H\partial_\alpha^{s-2}\gamma.
\end{multline}
Thus
\begin{multline}\label{dE42}
\frac{dE_5}{dt}=\int_0^{2\pi}\frac{\sigma\bar{A}(\theta_\alpha^2+1)}{2}\partial_\alpha^{s}\theta H\partial_\alpha^{s-1}\gamma d\alpha
\\
-\int_0^{2\pi}\sqrt{(\theta_\alpha^2+1)/2}\left(\frac{2\rho\widetilde{A}\pi}{L}H\partial_\alpha^{s-2}\gamma_t\right)
H\left((\sqrt{(\theta_\alpha^2+1)/2})_\alpha\partial_\alpha^{s-3}\gamma\right)d\alpha\\
-\int_0^{2\pi}\frac{(\theta_\alpha^2+1)\rho\widetilde{A}\pi}{L}H(\partial_\alpha^{s-2}\gamma_t) H\partial_\alpha^{s-2}\gamma d\alpha+\Psi_{5},
\end{multline}
and this will provide a cancellation with \eqref{dE123}.
Here, $\Psi_{5}$ is the collection of terms
\begin{multline*}
\Psi_5=\int_0^{2\pi}\frac{\sigma\bar{A}(\theta_\alpha^2+1)_\alpha}{2}\partial_\alpha^{s}\theta H\partial_\alpha^{s-2}\gamma d\alpha
\\
+\int_0^{2\pi}
\sigma\bar{A}\partial_\alpha^s\theta\partial_\alpha\left(\sqrt{(\theta_\alpha^2+1)/2}H\left((\sqrt{(\theta_\alpha^2+1)/2})_\alpha\partial_\alpha^{s-3}\gamma\right)\right)
d\alpha\\
+\int_0^{2\pi}
\sigma\bar{A}\partial_\alpha^s\theta\partial_\alpha\left(\sqrt{(\theta_\alpha^2+1)/2}\left[H,\sqrt{(\theta_\alpha^2+1)/2}\right]\partial_\alpha^{s-2}\gamma\right)d\alpha\\
+\int_0^{2\pi}\sqrt{(\theta_\alpha^2+1)/2}(-\frac{4\rho\widetilde{A}V_W\pi^2}{L^2} H(\partial_\alpha^{s-1}\gamma)+\Phi_3)\Lambda\left(\sqrt{(\theta_\alpha^2+1)/2}\partial_\alpha^{s-3}\gamma\right)d\alpha\\
-\int_0^{2\pi}\sqrt{(\theta_\alpha^2+1)/2}(\frac{2\rho\widetilde{A}\pi}{L}H\partial_\alpha^{s-2}\gamma_t)\left[H,\sqrt{(\theta_\alpha^2+1)/2}\right]\partial_\alpha^{s-2}\gamma d\alpha.
\end{multline*}
We notice that $\|\gamma_t\|_{s-4}\le C_1\exp(C_2 E)$.
The collection $\Psi_5$ involves at most $s$ derivatives of $\sigma\theta$, $s-1$ derivatives of $\theta$ and $s-3/2$ derivatives of $\gamma$, so $|\Psi_5|\le C_1\exp(C_2 E)$.

We next integrate by parts in the second term of the right-hand side of \eqref{dE42}, and again using the fact again that $\|\gamma_t\|_{s-4}\le C_1\exp(C_2 E),$
we have
\begin{multline}\nonumber
\left|-\int_0^{2\pi}H(\partial_\alpha^{s-4}\gamma_t)\partial_\alpha^2\left(\frac{2\rho\widetilde{A}\pi}{L}\sqrt{(\theta_\alpha^2+1)/2}H\left((\sqrt{(\theta_\alpha^2+1)/2})_\alpha\partial_\alpha^{s-3}\gamma\right)\right)d\alpha\right|
\\
\le C_1\exp(C_2 E).
\end{multline}
We then integrate by parts in the third term of the right-hand side of \eqref{dE42}, finding
\begin{multline}\label{E4remaider}
-\int_0^{2\pi}\frac{(\theta_\alpha^2+1)\rho\widetilde{A}\pi}{L}H(\partial_\alpha^{s-2}\gamma_t)(H\partial_\alpha^{s-2}\gamma)d\alpha\\
=-\frac{d}{dt}\int_0^{2\pi}\frac{(\theta_\alpha^2+1)\rho\widetilde{A}\pi}{2L}(H\partial_\alpha^{s-2}\gamma)^2d\alpha+\int_0^{2\pi}\frac{d}{dt}\left(\frac{(\theta_\alpha^2+1)\rho\widetilde{A}\pi}{2L}\right)(H\partial_\alpha^{s-2}\gamma)^2d\alpha\\
=-\frac{d(\rho E_6)}{dt}+\int_0^{2\pi}\frac{d}{dt}\left(\frac{(\theta_\alpha^2+1)\rho\widetilde{A}\pi}{2L}\right)(H\partial_\alpha^{s-2}\gamma)^2d\alpha.
\end{multline}
The second term of the right-hand side of \eqref{E4remaider} involves $L_{t},$ at most one derivative of $\theta_{t},$ and at most $s-2$ derivatives of $\gamma.$
To estimate this, since $\|\theta_t\|_{s-5/2}\le C_1\exp(C_2 E)$ and $L_t$ is bounded,  for $s>7/2$ we have
\[
\left|\int_0^{2\pi}\frac{d}{dt}\left(\frac{(\theta_\alpha^2+1)\rho\widetilde{A}\pi}{2L}\right)(H\partial_\alpha^{s-2}\gamma)^2d\alpha\right|\le C_1\exp(C_2 E).
\]
We then compute $\frac{dE_7}{dt}$ (which is similar to ${dE_3}/{dt}$), finding
\begin{multline}\label{de6}
\frac{d(\rho E_7)}{dt}\\=\int_0^{2\pi} \left(\frac{L\rho\tilde{A}V_W^2}{\pi}\right)_t\left(\partial_\alpha^{s-1} \theta\right)\partial_\alpha^{s-1}\theta d\alpha+\int_0^{2\pi} \frac{2L\rho\tilde{A}V_W^2}{\pi}\left(\partial_\alpha^{s-1} \theta\right)\partial_\alpha^{s-1}\theta_t d\alpha \\
\le \int_0^{2\pi} \left(\frac{L\rho\tilde{A}V_W^2}{\pi}\right)_t\left(\partial_\alpha^{s-1} \theta\right)\partial_\alpha^{s-1}\theta d\alpha+\int_0^{2\pi} \frac{4\pi\rho\tilde{A}V_W^2}{L}\left(\partial_\alpha^{s-1} \theta\right)(H\partial_\alpha^{s}\gamma) d\alpha
\\+C_1\exp(C_2 E)\\
\le\int_0^{2\pi} \left(\frac{L\rho\tilde{A}V_W^2}{\pi}\right)_t\left(\partial_\alpha^{s-1} \theta\right)\partial_\alpha^{s-1}\theta d\alpha-\int_0^{2\pi} \frac{4\pi\rho\tilde{A}V_W^2}{L}\left(\partial_\alpha^{s-1} \theta\right)(H\partial_\alpha^{s}\gamma) d\alpha
\\+C_1\exp(C_2 E).
\end{multline}
 To estimate the first term on the right-hand side of \eqref{de6}, we focus on $(V_W)_t$. Since $s$ is large enough, we have uniform bounds on
$\gamma_{t},$  $\theta_t$ and $L_t$, and thus $(V_W)_t$ is also uniformly bounded. Therefore
\[
\left|\int_0^{2\pi} \left(\frac{L\rho\tilde{A}V_W^2}{\pi}\right)_t\left(\partial_\alpha^{s-1} \theta\right)\partial_\alpha^{s-1}\theta d\alpha\right|\le  C_1\exp(C_2 E).
\]
We integrate by parts in the second term of the right-hand side of \eqref{de6},
\begin{align*}
&\int_0^{2\pi} \frac{4\pi\rho\tilde{A}V_W^2}{L}\partial_\alpha^{s-1} \theta H\partial_\alpha^{s}\gamma d\alpha\\&=-\int_0^{2\pi} \frac{4\pi\rho\tilde{A}V_W^2}{L}\partial_\alpha^{s} \theta H\partial_\alpha^{s-1}\gamma d\alpha-\int_0^{2\pi} \frac{4\pi\rho\tilde{A}(V_W^2)_\alpha}{L} \partial_\alpha^{s-1} \theta H\partial_\alpha^{s-1}\gamma d\alpha\\
&\le -\int_0^{2\pi} \frac{4\pi\rho\tilde{A}V_W^2}{L} \partial_\alpha^{s} \theta H\partial_\alpha^{s-1}\gamma d\alpha + C_1\exp(C_2 E).
\end{align*}

We now are in a position to combine the time derivatives of $E_0,$ $\sigma E_1,$ $E_2,$ $E_3$ and $\rho E_4,$ $ E_5,$ $\rho E_6,$ and $\rho E_{7}.$
 We define the total energy
 \begin{align}
 E_{total}&=E_0+\sigma E_1+E_2+E_3+E_5+\rho E_4+\rho E_6+\rho E_7\nonumber\\
 &=E_0+\sigma E_1+E_2+E_3+E_5+2\pi\rho_0 E_4/L+2\pi\rho_0 E_6/L+2\pi\rho_0 E_7/L,
 \nonumber
 \end{align}
  and we have concluded the following estimate:
\begin{equation}\nonumber
\frac{dE_{total}}{dt}\le C_1\exp(C_2E_{total}).
\end{equation}
Here $C_1,C_2$ are independent of $\rho$ and $\sigma.$
Our energy bound implies
\begin{equation}\nonumber
E_{total}(t)\le \frac{-\ln(e^{-C_2E_{total}(0)}-C_1C_2t)}{C_2}.
\end{equation}
This completes the proof of the theorem.
\end{proof}

\section{Cauchy sequence as $(\sigma,\rho_0)\rightarrow (0^+,0^+)$}\label{limitSection}
Under Assumption \ref{solvabilityAssumption}, for any $(\sigma,\rho_0)$ we have proved above that solutions
$(\theta^{\sigma,\rho_{0}},\gamma^{\sigma,\rho_{0}})$ are in $C([0,T], \mathcal{O}),$ with the time, $T,$ independent of the parameters.
Over this time interval, the norm
\begin{equation}\nonumber
\sigma\|\theta^{\sigma,\rho_0}\|_s^2+\|\theta^{\sigma,\rho_0}\|_{s-1}^2+\|\gamma^{\sigma,\rho_0}\|_{s-3/2}^2+\rho_0\|\gamma^{\sigma,\rho_0}\|_{s-1}^2
\end{equation}
is uniformly bounded with respect to the parameters.
This implies that for $\rho_0=0,$ $\sigma=0$, the norm
$\|\theta\|_{s-1}^2+\|\gamma\|_{s-3/2}^2$ is bounded; without the elastic parameters, this is the case of the vortex sheet with surface tension.
We now demonstrate that this vortex sheet with surface tension
 is the limit of hydroelastic waves as
the density of elastic sheet and bending energy goes to zero by demonstrating that the  solutions of $(\theta^{\sigma,\rho_0},\gamma^{\sigma,\rho_0}) $ are
a Cauchy sequence as the parameters vanish.  In what follows, we continue to take $s$ sufficiently large.
\begin{theorem}\label{Cauchytheorem}
For any $\eta>0$, there exists  $\delta>0$ such that if
\begin{equation}\nonumber
\sqrt{|\rho_0-\rho_0^\prime|^2+|\sigma-\sigma^\prime|^2}<\delta,
\end{equation}
then
\begin{equation}\nonumber
\|\theta-\theta^\prime\|_2+\|\gamma-\gamma^\prime \|_{3/2}<\eta,
\end{equation}
where $(\theta,\gamma) $ and  $(\theta^\prime,\gamma^\prime) $ refer to $(\theta^{\sigma,\rho_0},\gamma^{\sigma,\rho_0}) $ and $(\theta^{\sigma^\prime,\rho_0^\prime},\gamma^{\sigma^\prime,\rho_0^\prime}) $, respectively, with the superscripts omitted.
\end{theorem}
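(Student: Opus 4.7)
My plan is to proceed by setting up a Gronwall-type argument for a well-chosen energy functional of the difference. Let $\Theta=\theta-\theta'$ and $\Gamma=\gamma-\gamma'$, and derive evolution equations for $(\Theta,\Gamma)$ by subtracting \eqref{evolution of theta} and \eqref{gammaend} for the two solutions. Since both solutions start from the same initial data $(\theta_{0},\gamma_{0})$, we have $\Theta(0)=0$ and $\Gamma(0)=0$. The goal is to show a differential inequality of the form
\begin{equation}\nonumber
\frac{d\mathcal{E}_{C}}{dt}\le C_{1}\mathcal{E}_{C}+C_{2}\left(|\sigma-\sigma'|^{2}+|\rho_{0}-\rho_{0}'|^{2}\right),
\end{equation}
with $C_{1},C_{2}$ independent of the parameters, where $\mathcal{E}_{C}$ is equivalent to $\|\Theta\|_{2}^{2}+\|\Gamma\|_{3/2}^{2}$ (plus weighted auxiliary terms). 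Gronwall's inequality applied over $[0,T]$ then yields $\mathcal{E}_{C}(t)\le C(|\sigma-\sigma'|^{2}+|\rho_{0}-\rho_{0}'|^{2})$, which immediately gives the Cauchy property.

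The energy $\mathcal{E}_{C}$ should mirror the structure of $E_{\text{total}}$ from the proof of Theorem \ref{uniform_time}, but at the much lower regularity level of $2$ in $\theta$ and $3/2$ in $\gamma$. Concretely, I would take
\begin{equation}\nonumber
\mathcal{E}_{C}(t)=\|\Theta\|_{0}^{2}+\|\Gamma\|_{0}^{2}+\|\Theta\|_{2}^{2}+\int_{0}^{2\pi}(\partial_{\alpha}\Gamma)(H\partial_{\alpha}^{2}\Gamma)\,d\alpha+\sigma'\|\Theta\|_{3}^{2}+\rho_{0}'\|\Gamma\|_{2}^{2},
\end{equation}
where the third integral is equivalent to $\|\Gamma\|_{\dot H^{3/2}}^{2}$ and plays the role of $E_{2}$, the $\|\Theta\|_{2}^{2}$ piece plays the role of $E_{3}$, and the weighted pieces mirror $\sigma E_{1}$ and $\rho E_{4}$. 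Computing $d\mathcal{E}_{C}/dt$, the same dispersive cancellation structure that was exploited in Theorem \ref{uniform_time} (between the $\lambda\theta_{\alpha\alpha}$ term in $\gamma_{t}$ and the $H(\gamma_{\alpha})$ term in $\theta_{t}$, and between the $\sigma\bar{A}\partial_{\alpha}^{4}\theta$ term and the $H(\gamma_{\alpha t})$ term when $\rho_{0}>0$) will cancel the problematic leading-order contributions at level $2,3/2$. All remaining terms involving differences of $K[z_{d}]$, $J[z_{d}]$, $S[z_{d}]$, and Hilbert commutators are controlled by the Lipschitz estimates in Lemma \ref{lemma6} and \eqref{Lip1}, \eqref{Lip2}, paired with the uniform bounds of Lemma \ref{priori} applied to each solution in $\mathcal{O}$.

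The main obstacle is the bookkeeping of the parameter-difference terms and verifying that none of them is bad. In the $\Gamma_{t}$ equation, the bending contribution produces
\begin{equation}\nonumber
-\sigma\bar{A}\partial_{\alpha}^{4}\theta+\sigma'\bar{A}'\partial_{\alpha}^{4}\theta'=-(\sigma-\sigma')\bar{A}\partial_{\alpha}^{4}\theta-\sigma'(\bar{A}-\bar{A}')\partial_{\alpha}^{4}\theta-\sigma'\bar{A}'\partial_{\alpha}^{4}\Theta,
\end{equation}
and the mass contribution from $-\frac{2\rho\widetilde{A}\pi}{L}H(\gamma_{\alpha t})$ decomposes analogously. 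Taking $s$ sufficiently large (so that $s-1\ge 4$), the first two pieces are bounded directly using the uniform bound on $\|\theta\|_{s-1}$ and the Lipschitz dependence of $\bar{A}$ and $L$ on $\theta$, yielding an estimate of size $|\sigma-\sigma'|+\sigma'\|\Theta\|_{2}$, which feeds cleanly into the right-hand side of the differential inequality. The third piece, when paired against the test function from the $\|\Gamma\|_{3/2}^{2}$ energy, couples to $\sigma'\|\Theta\|_{3}^{2}$ and is exactly absorbed by $d(\sigma'\|\Theta\|_{3}^{2})/dt$ in the weighted part of $\mathcal{E}_{C}$, via the same cancellation used in Theorem \ref{uniform_time}. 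The mass-term pieces are handled in the parallel fashion, using that $\gamma_{t}$ is uniformly bounded in $H^{0}$ by Assumption \ref{solvabilityAssumption} and Lemma \ref{priori}. All other terms being either manifestly lower-order in derivatives or containing a Lipschitz factor of $\|\Theta\|_{i}+\|\Gamma\|_{i-1/2}$ for $i\le 2$, the desired differential inequality follows and Gronwall closes the estimate.
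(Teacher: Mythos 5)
Your overall strategy -- a Gronwall argument for a weighted difference energy that mirrors the structure of $E_{\text{total}}$ from Theorem \ref{uniform_time} -- is precisely what the paper does, and the parameter-difference bookkeeping you describe for the bending and mass terms is also the right idea. But the proposal as written has two genuine gaps.

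First, the explicit energy $\mathcal{E}_C$ you define (analogues of $Z_0,\sigma Z_1, Z_2, Z_3, \rho Z_4$) is not sufficient: after combining the time derivatives of those pieces, there remain two \emph{critical-order} integrals with coefficients $\sigma\bar{A}(\theta_\alpha^2+1)/2$ and $4\pi\rho\widetilde{A}V_W^2/L$, pairing $\partial_\alpha^3\delta\theta$ against $H\partial_\alpha^2\delta\gamma$. These are not lower-order, and they do not carry a Lipschitz factor that makes them small. They require the weighted auxiliary energies $Z_5, Z_6, Z_7$ (with weights $\sqrt{(\theta_\alpha^2+1)/2}$, $(\theta_\alpha^2+1)$, and $V_W^2$ respectively), whose time derivatives are engineered to produce exact cancellations against those leftover terms. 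You write ``plus weighted auxiliary terms'' in your plan, but you never identify them, and your concluding claim that ``all other terms being either manifestly lower-order\ldots or containing a Lipschitz factor'' is wrong for these two terms. Without the $Z_5, Z_6, Z_7$ analogues the estimate does not close.

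Second, your proposal never addresses the nontrivial issue of controlling $\delta\gamma_t = \gamma_t - \gamma_t'$. The quantity $\gamma_t$ is defined only implicitly, via the invertible operator $I + \frac{2\pi\rho\widetilde{A}}{L}\Lambda + T[\theta]$ acting on a right-hand side $F$, so the Lipschitz estimate for $\delta\gamma_t$ does not follow from a direct subtraction of explicit formulas. The paper handles this by writing $\gamma_t = D_1^{-1}D_2^{-1}F$, using the resolvent identity $D_1^{-1} - (D_1')^{-1} = D_1^{-1}(D_1'-D_1)(D_1')^{-1}$ (and similarly for $D_2$), and establishing a separate Lipschitz estimate \eqref{LpT} for the operator $T[\theta]$ itself. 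This operator-difference analysis is needed when you pair $\delta\gamma$ against $\delta\gamma_t$ in $dZ_0/dt$, and it is a key step your proposal omits. You also do not state the Lipschitz estimates on $B_1$, $B_2$, $\widetilde{B}_2$, $\bm{m}-\bm{m}'$, $V_W - V_W'$, $\bm{W}-\bm{W}'$, etc.\ (the content of the paper's Lemma \ref{Lippriori}) that are the analogues of Lemma \ref{priori}; citing Lemma \ref{priori} for each solution separately does not substitute for these difference estimates.
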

Before proving the theorem, we will establish some important Lipschitz estimates.
For two given solutions $(\theta,\gamma)$ and  $(\theta^\prime,\gamma^\prime)$ in $C([0,T], \mathcal{O})$, we define differences
$\delta \theta=\theta-\theta^\prime$ and $\delta\gamma=\gamma-\gamma^\prime.$  The evolution equation for  $\delta \theta $ is
\begin{equation}\label{evolution of deltatheta}
\delta\theta_t=\frac{2\pi^2}{L^2}H(\delta\gamma_\alpha)+\frac{2\pi}{L}V_W^\prime\delta\theta_\alpha+\frac{2\pi}{L}\theta_\alpha\delta V_W+B_1.
\end{equation}
where $B_1$ is defined by
\begin{align}\nonumber
B_1=\left(\frac{2\pi}{L}\bm{m}\cdot \bm{n}-\frac{2\pi}{L^\prime}\bm{m}^\prime\cdot \bm{n}^\prime\right)+\left(\frac{2\pi^2}{L^2}-\frac{2\pi^2}{(L^\prime)^2}\right)H(\gamma^\prime_\alpha)+\left(\frac{2\pi}{L}-\frac{2\pi }{L^\prime}\right)V_W^\prime\theta^\prime_\alpha.
\end{align}
It is understood in each case that $L$ and $L^\prime$ indicate the lengths of the curves associated to $\theta$ and $\theta\prime,$ respectively, and
$\delta L = L-L^\prime;$ other quantities such as $\delta V_{W}$ are defined in the same manner.
Similarly, the evolution of  $\delta \gamma $ is
\begin{multline}\label{deltagammaend}
\delta\gamma_t=\lambda\delta\theta_{\alpha\alpha}-\sigma\bar{A} \partial_\alpha^4\delta\theta -\frac{3\sigma\bar{A}\theta_\alpha^2}{2}\delta\theta_{\alpha\alpha} +\left(\frac{2\pi V_W}{L}-\frac{4A\pi^2}{L^2}\gamma\right)\delta\gamma_\alpha+\frac{4\pi\rho\widetilde{A}}{L} V_W ^2\delta\theta_{\alpha\alpha}\\-\frac{2\rho\widetilde{A}\pi}{L}H(\delta\gamma_{\alpha t})
-\frac{4\rho\widetilde{A}V_W\pi^2}{L^2} H(\delta\gamma_{\alpha\alpha})+T[\theta]\delta\gamma_t+\rho(\widetilde{R}_1
-\widetilde{R}_1^\prime)+B_2+\rho\tilde{B}_{2},
\end{multline}
where $B_2$ is defined by
\begin{align*}
 B_2&=\left(-\frac{3\bar{A}(\sigma\theta_\alpha^2-\sigma^\prime(\theta_\alpha^\prime)^2)}{2}+(\lambda-\lambda^\prime)+\frac{4\pi\rho\widetilde{A}}{L} V_W ^2-\frac{4\pi\rho\widetilde{A}}{L^\prime} (V^\prime_W) ^2\right)\theta^\prime_{\alpha\alpha}\\
 &+\left(\frac{2\pi V_W}{L}-\frac{2\pi V_W^\prime}{L^\prime}-\frac{4A\pi^2}{L^2}\gamma+\frac{4A\pi^2}{(L^\prime)^2}\gamma^\prime\right)\gamma_\alpha^\prime+2A(J[z_d^\prime]-J[z_d])\gamma^\prime_t\\
 &+\left(\frac{2\rho^\prime\widetilde{A}\pi}{L^\prime}-\frac{2\rho\widetilde{A}\pi}{L}\right)H(\gamma^\prime_{\alpha t})+\left(\frac{4\rho^\prime\widetilde{A}V_W^\prime\pi^2}{(L^\prime)^2}-\frac{4\rho\widetilde{A}V_W\pi^2}{L^2}\right) H(\gamma^\prime_{\alpha\alpha})
\\&+2(\rho S[z_d]-\rho^\prime S[z_d^\prime])\gamma^\prime_t
-\left(\frac{4\pi\rho\widetilde{A}}{L}-\frac{4\pi\rho^\prime\widetilde{A}}{L^\prime}\right)\theta^\prime_\alpha J[z_d^\prime]\gamma^\prime_t\\&
-(\sigma-\sigma^\prime)\bar{A}\partial_\alpha^4\theta^\prime+(\rho-\rho^\prime)\widetilde{R}_1^\prime+(\rho-\rho^\prime)\widetilde{R}_2^\prime+R-R^\prime,
\end{align*}
and $\tilde{B}_{2}$ is defined by
\begin{align}\nonumber
\tilde{B}_2=\widetilde{R}_2-\widetilde{R}_2^\prime-\left(\frac{4\pi\widetilde{A}\delta\theta_\alpha}{L}J[z_d^\prime]\right)\gamma^\prime_{t}.
\end{align}

We begin to define the energy functional for the difference of two solutions as
\begin{equation}\nonumber
E_d(t)=Z_0(t)+\sigma Z_1(t)+Z_2+Z_3(t)+\rho Z_4(t),
\end{equation}
where
\begin{align}
\nonumber
&Z_0(t)=\frac{1}{2}(\delta L)^2+\frac12\int_0^{2\pi}(\delta\theta)^2+(\delta\gamma)^2 d\alpha,\\
\nonumber
&Z_1(t)=\frac{L^2 \bar{A}}{4\pi^2 }\int_0^{2\pi}(\partial_\alpha^3\delta\theta)^2 d\alpha,\\
\nonumber
&Z_2(t)=\frac12\int_0^{2\pi}(\partial_\alpha \delta\gamma)( H\partial_\alpha^{2} \delta\gamma) d\alpha,\\
\nonumber
&Z_3=\left(\frac{\tau L}{\pi(\rho_1+\rho_2)}+\frac{
\sigma \bar{A}L^2}{8\pi^2}\right)\int_0^{2\pi}(\partial_\alpha^{2}\delta\theta)^2d\alpha,\\
\nonumber
&Z_4(t)=\frac{\pi \widetilde{A}}{L}\int_0^{2\pi}(H\partial_\alpha^{2} \delta\gamma)^2 d\alpha.
\end{align}
We will estimate this energy, $E_{d},$ in due course.  But first we will continue with some auxiliary estimates.

We next establish estimates for $B_{1},$ $B_{2},$ $\widetilde{B}_{2}$ and similar quantities, in the following Lemma.
\begin{lemma}\label{Lippriori}
Assuming Assumption \ref{solvabilityAssumption}, and for given $(\sigma,\rho_0),(\sigma^\prime,\rho_0^\prime),$
the quantities associated to the solutions $(\theta,\gamma) $,  $(\theta^\prime,\gamma^\prime) $ in $C([0,T], \mathcal{O})$ satisfy the following estimates:
\begin{align}
\nonumber&|\delta L_t|\le CE_d^{1/2}\\
\nonumber&\|\delta \theta_t\|_0\le CE_d^{1/2}\\
\nonumber
&\|B_1\|_i\le C(\|\delta\theta\|_i+C\|\delta\gamma\|_{3/2}+|\delta L|),\quad i=2,3,\\
\nonumber
&\|B_2\|_{3/2}\le  CE_d^{1/2}+C(|\rho_0-\rho_0^\prime|+|\sigma-\sigma^\prime|),\\
\nonumber
&\|\tilde{B}_2\|_{1}\le  CE_d^{1/2},\\
&\|\widetilde{R}_1-\widetilde{R}_1^\prime\|_1\le  C(\|\delta\theta\|_2+C\|\delta\gamma\|_{2}+|\delta L|).\label{R1Lip}
\end{align}
\end{lemma}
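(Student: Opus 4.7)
The plan is to prove each bound by taking differences, writing the difference as a sum of pieces in which exactly one factor carries a $\delta$-quantity while the remaining factors are estimated uniformly using membership in $\mathcal{O}$, the energy bound from Theorem \ref{uniform_time}, and the algebra and composition estimates of Lemmas \ref{lemma2} and \ref{lemma3}. The key nonstandard ingredients are the Lipschitz estimates for the Birkhoff--Rott related operators: Lemma \ref{lemma6} for $K[z_d]$, and the Lipschitz bounds \eqref{Lip1} and \eqref{Lip2} for $J[z_d]$ and $S[z_d]$ proved in Lemma \ref{Tbound}. Each of these produces a factor $\|\delta\theta\|_i$ that, after multiplication by bounded coefficients, is absorbed into $E_d^{1/2}$.

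I would first record three reusable building blocks. From \eqref{tnDef} together with the mean value theorem, $\|\bm{n}-\bm{n}'\|_i + \|\bm{t}-\bm{t}'\|_i \le C\|\delta\theta\|_i$. From \eqref{zdd}, $\|z_d - z_d'\|_{i+1} \le C(\|\delta\theta\|_i + |\delta L|)$, and likewise for $z_\alpha-z_\alpha'$. Decomposing $\bm{m}-\bm{m}'$ using its definition and adding and subtracting inside the arguments of $K[z_d]$ and $[H,1/z_\alpha^2]$, Lemmas \ref{lemma6} and \ref{lemma7} together with Lemma \ref{priori} yield $\|\bm{m}-\bm{m}'\|_i \le C(\|\delta\theta\|_i + \|\delta\gamma\|_i + |\delta L|)$ in the relevant range of $i$. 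The bound on $|\delta L_t|$ follows from \eqref{LtDef}, since $\delta L_t$ is the integral of $(\delta\theta_\alpha)U + \theta'_\alpha(\delta U)$ and $\delta U = \delta(\bm{W}\cdot\bm{n})$ is controlled in $L^2$ by $\|\delta\gamma\|_{3/2} + \|\delta\theta\|_1 + |\delta L|$ via the Lipschitz estimate for $K[z_d]$ and commutator estimates; every piece is inside $E_d^{1/2}$. The bound $\|\delta\theta_t\|_0 \le CE_d^{1/2}$ then follows directly from \eqref{evolution of deltatheta}, since the leading term $H(\delta\gamma_\alpha)$ is controlled in $L^2$ by $\|\delta\gamma\|_1 \le CE_d^{1/2}$ and the remaining pieces are handled using the building blocks.

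The estimate for $B_1$ proceeds by splitting each of its three summands by adding and subtracting so that exactly one $\delta$-factor appears in each term; for instance, $\frac{2\pi}{L}\bm{m}\cdot\bm{n} - \frac{2\pi}{L'}\bm{m}'\cdot\bm{n}' = \frac{2\pi}{L}(\bm{m}-\bm{m}')\cdot\bm{n} + \frac{2\pi}{L}\bm{m}'\cdot(\bm{n}-\bm{n}') + 2\pi\left(\frac{1}{L}-\frac{1}{L'}\right)\bm{m}'\cdot\bm{n}'$, and each piece is estimated in $H^i$ for $i=2,3$ using the building blocks, the Sobolev algebra property, and the uniform $H^s$ bounds on the non-difference factors. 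The estimate $\|\widetilde{R}_1-\widetilde{R}_1'\|_1 \le C(\|\delta\theta\|_2 + \|\delta\gamma\|_2 + |\delta L|)$ is obtained by inspecting each summand in \eqref{TildeR1Def} in the same way, the only subtlety being the commutator $[H,(H\gamma_\alpha)]\gamma\theta_\alpha$, whose difference is controlled by two applications of Lemma \ref{lemma7} together with the uniform regularity of $\gamma$ and $\theta$.

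The main obstacle will be the estimates on $B_2$ and $\widetilde{B}_2$, where two qualitatively new features appear. First, genuine parameter differences $\rho_0-\rho_0'$ and $\sigma-\sigma'$ multiply quantities bounded only in the non-difference norm (for instance $(\sigma-\sigma')\bar{A}\partial_\alpha^4\theta'$, $(\rho-\rho')\widetilde{R}_1'$, and $(\rho-\rho')\widetilde{R}_2'$); these contribute exactly the explicit $C(|\rho_0-\rho_0'|+|\sigma-\sigma'|)$ term on the right-hand side. Second, one encounters differences of operators evaluated at time derivatives, such as $(J[z_d']-J[z_d])\gamma'_t$ and $(\rho S[z_d]-\rho' S[z_d'])\gamma'_t$, as well as $H(\gamma'_{\alpha t})$ multiplied by $\delta(\rho\widetilde{A}\pi/L)$. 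For the first of these, \eqref{Lip1} supplies a factor $\|\delta\theta\|_1$ that lies inside $E_d^{1/2}$, and a uniform $H^1$ bound on $\gamma'_t$ coming from \eqref{gammaend2}, Assumption \ref{solvabilityAssumption}, and the uniform energy from Theorem \ref{uniform_time} completes the argument; for the second, the same mechanism applies after first writing $\rho S[z_d]-\rho'S[z_d'] = \rho(S[z_d]-S[z_d']) + (\rho-\rho')S[z_d']$, invoking \eqref{Lip2} on the first piece and absorbing the second into the explicit parameter difference. The delicate bookkeeping is ensuring that each $\delta$-factor lands on a norm that $E_d^{1/2}$ actually controls, and this is what dictates the specific indices $3/2$ and $1$ appearing in the statement; in particular $\gamma'_{\alpha t}$ must be placed only where the remaining factors carry enough smoothing to absorb the loss of one derivative, which \eqref{gammaend2} and Assumption \ref{solvabilityAssumption} make possible.
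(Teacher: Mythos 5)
Your proposal follows essentially the same route as the paper: record Lipschitz estimates for $\bm{t}$, $\bm{n}$, $z_\alpha$, $\bm{m}$, $\bm{W}$, $V_W$ by repeated add-and-subtract so exactly one factor carries a $\delta$, invoke Lemma~\ref{lemma6}, Lemma~\ref{lemma7}, and the Lipschitz bounds \eqref{Lip1}--\eqref{Lip2} for the singular-integral pieces, treat parameter differences $\rho_0-\rho_0'$ and $\sigma-\sigma'$ as explicit remainder terms, and use Assumption~\ref{solvabilityAssumption} together with the uniform energy of Theorem~\ref{uniform_time} to get uniform bounds on $\gamma_t'$ where needed. The only minor discrepancies from the paper's proof are cosmetic: a slight reordering (the paper establishes $\|V_W-V_W'\|$ and $B_1$ before $\delta\theta_t$ and $\delta L_t$, which your argument implicitly relies on as well) and a silent reliance on the Lipschitz bounds of Lemma~\ref{Tbound} holding at index $3/2$ rather than only at $1$ as literally stated there, a gap the paper also leaves to the reader.
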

\begin{proof}
It is important to be able to get the Lipschitz estimates on the various quantities corresponding to Lemma \ref{priori} associated to $(\theta-\theta^\prime, \gamma-\gamma^\prime)$ with $i=2,3$. The simplest quantities are the unit tangent and normal vectors which are bounded by standard Lipschitz estimates for sine and cosine:
\begin{align}\nonumber
\|\bm{t}-\bm{t}^\prime\|_i=\|(\cos (\theta)-\cos(\theta^\prime), \sin(\theta)-\sin(\theta^\prime))\|_i\le C\|\theta-\theta^\prime\|_i,
\end{align}
 and similarly,
 \[
 \|\bm{n}-\bm{n}^\prime\|_i\le C\|\theta-\theta^\prime\|_i.
 \]
Since $z_a=\frac{L}{2\pi}\mathcal{C}(\bm{t})$, a bound for $z_\alpha-z_\alpha^\prime$ follows:
\begin{align}\nonumber
\|z_\alpha-z_\alpha^\prime\|_i\le
\left\|\frac{L-L^\prime}{2\pi}\mathcal{C}(\bm{t})\right\|_i
+\frac{L^\prime}{2\pi}\|\mathcal{C}(\bm{t})-\mathcal{C}(\bm{t}^\prime)\|_3\le C(\|\delta\theta\|_i+C\|\delta\gamma\|_{3/2}+|\delta L \|).
\end{align}
Next we estimate $\bm{m}-\bm{m}^\prime$. We rewrite
\[\mathcal{C}(\bm{m}-\bm{m}^\prime)= \uppercase\expandafter{\romannumeral1}
+ \uppercase\expandafter{\romannumeral2},
\]
where
\begin{align*}
 \uppercase\expandafter{\romannumeral1}=z_\alpha K[z_d]\left(\left(\frac{\gamma}{z_\alpha}\right)_\alpha\right)-z_\alpha^\prime K[z_d^\prime]\left(\left(\frac{\gamma^\prime}{z_\alpha^\prime}\right)_\alpha\right),\\
 \uppercase\expandafter{\romannumeral2}=\frac{z_\alpha}{2i} \left[H,\frac{1}{z_\alpha^2}\right]\left(z_\alpha\left(\frac{\gamma}{z_\alpha}\right)_\alpha\right)
 -\frac{z_\alpha^\prime}{2i} \left[H,\frac{1}{(z_\alpha^\prime)^2}\right]\left(z_\alpha^\prime\left(\frac{\gamma^\prime}{z_\alpha^\prime}\right)_\alpha\right).
\end{align*}
We rewrite $ \uppercase\expandafter{\romannumeral1}$ by adding and subtracting:
\begin{multline}\nonumber
 \uppercase\expandafter{\romannumeral1}=(z_\alpha-z_\alpha^\prime) K[z_d]\left(\left(\frac{\gamma}{z_\alpha}\right)_\alpha\right)
 +z_\alpha^\prime (K[z_d]-K[z_d^\prime])\left(\left(\frac{\gamma}{z_\alpha}\right)_\alpha\right)
\\
 +z_\alpha^\prime K[z_d^\prime]\left(\left(\frac{\gamma}{z_\alpha}\right)_\alpha
 -\left(\frac{\gamma^\prime}{z_\alpha^\prime}\right)_\alpha\right).
\end{multline}
 With our uniform bounds on $\theta$ and $\gamma$, and using Lemma \ref{lemma2} and Lemma \ref{lemma5}, the
  first term on the right-hand side  is bounded in $H^i$ by $C(\|\delta\theta\|_i+C\|\delta\gamma\|_{3/2}+|\delta L|).$ For the second term, we apply Lemma \ref{lemma6}, and we thus see that this term is also bounded in $H^{i}$
  by $C(\|\delta\theta\|_i+C\|\delta\gamma\|_{3/2}+|\delta L|)$. For the third term on the right-hand side, its norm in $H^i$ is bounded by $C(\|\delta\theta\|_i+C\|\delta\gamma\|_{3/2}+|\delta L|)$ since $K[z_d^\prime]$ is a smoothing operator by Lemma \ref{lemma5}.

 We now consider the term  $\uppercase\expandafter{\romannumeral2}$. We start by adding and subtracting:
 \begin{multline*}
 \uppercase\expandafter{\romannumeral2}=\frac{z_\alpha-z_\alpha^\prime}{2i}
 \left[H,\frac{1}{z_\alpha^2}\right]\left(z_\alpha\left(\frac{\gamma}{z_\alpha}\right)_\alpha\right)
 +\frac{z_\alpha^\prime}{2i}
 \left[H,\frac{1}{z_\alpha^2}-\frac{1}{(z_\alpha^\prime)^2}\right]\left(z_\alpha\left(\frac{\gamma}{z_\alpha}\right)_\alpha\right)
 \\
 +\frac{z_\alpha^\prime}{2i} \left[H,\frac{1}{(z_\alpha^\prime)^2}\right]
 \left(z_\alpha\left(\frac{\gamma}{z_\alpha}\right)_\alpha-z_\alpha^\prime\left(\frac{\gamma^\prime}{z_\alpha^\prime}\right)_\alpha\right).
 \end{multline*}
 The first term on the right-hand side can be immediately bounded in $H^{i}$ by $C(\|\delta\theta\|_i+C\|\delta\gamma\|_{3/2}+|\delta L|)$.
 The second and third terms can be bounded in $H^{i}$ by  $C(\|\delta\theta\|_i+C\|\delta\gamma\|_{3/2}+|\delta L|)$ by Lemma \ref{lemma7}.
 Each of the terms on the right-hand side can clearly be bounded in $H^{i}$ by $C(\|\delta\theta\|_i+C\|\delta\gamma\|_{3/2}+|\delta L|)$. This completes the estimate of $\bm{m}-\bm{m}^\prime$. We conclude
 \begin{equation}\nonumber
 \|\bm{m}-\bm{m}^\prime\|_i\le C(\|\delta\theta\|_i+C\|\delta\gamma\|_{3/2}+|\delta L|).
 \end{equation}
Now we can conclude that
\begin{equation}\nonumber
\|B_1\|_i\le C(\|\delta\theta\|_i+C\|\delta\gamma\|_{3/2}+|\delta L|).
\end{equation}

After adding and subtracting several times, and by the above estimates on $\bm{t}-\bm{t}^\prime$ and $\bm{m}-\bm{m}^\prime$, and by equation \eqref{V_Wep}, the following estimate can be found:
\[
\|V_W-V_W^\prime\|_{2}\le C E_d^{1/2}.
\]
We also may conclude that
\[
\|\delta\theta_t\|_{0}\le C E_d^{1/2}.
\]
 Now we estimate $\|\bm{W}-\bm{W}^\prime\|_{3/2}$.  We first expand this, as we have several times before:
 \begin{multline}\nonumber
 \mathcal{C}^*(\bm{W})-\mathcal{C}^*(\bm{W}^\prime)
 =(K[z_d]-K[z_d^\prime])\gamma\\+\frac{1}{2i}H(\gamma/z_\alpha-\gamma^\prime/z_\alpha)
 +K[z_d^\prime](\gamma-\gamma^\prime)+\frac{1}{2i}H(\gamma^\prime/z_\alpha-\gamma^\prime/z_\alpha^\prime).
 \end{multline}
By Lemma \ref{lemma5} and Lemma \ref{lemma6}, $\|\bm{W}-\bm{W}^\prime\|_{3/2}$ is thus bounded by $C E_d^{1/2}$.
This immediately yields the fact that $\|U-U^\prime\|_{3/2}$ is also
bounded by $C E_d^{1/2}$ since $U=\bm{W}\cdot\bm{n}$.  From formula \eqref{LtDef} and the Schwartz inequality, we immediately find the following estimate:
\begin{equation}\nonumber
|L_t-L_t^\prime|\le C E_d^{1/2}.{}
\end{equation}
From $z_t=\mathcal{C}(U\bm{n}+V\bm{t})$ , we have
  \begin{equation}\nonumber
\|z_t-z_t^\prime\|_{3/2}\le C E_d^{1/2}.
\end{equation}
As in the proof of Lemma \ref{priori}, we could continue to get the Lipschitz estimates on $R_1,$ $R_3,$ $R_5,$ $\widetilde{R}_2,$ and $R.$
There are many Hilbert commutators and terms involving
 $K[z_d]$ in the definition of $B_2$ and we only show the estimate for one such term  in $\|R_3-R_3^\prime\|_{3/2}$ as an example. In the term on which we focus, we use the smoothing properties of Hilbert commutators carefully; for other terms,
 the similar properties of $K[z_{d}]$ are used.  For the term we select, we add and subtract:
\begin{multline*}
\left[H,\frac{1}{z_\alpha}\right]\left(z_\alpha\left(\frac{\gamma z_{t\alpha}}{z_\alpha}\right)_\alpha\right)
-\left[H,\frac{1}{z_\alpha^\prime}\right]\left(z_\alpha^\prime\left(\frac{\gamma^\prime z_{t\alpha}^\prime}{z_\alpha^\prime}\right)_\alpha\right)
=\left[H,\frac{1}{z_\alpha}-\frac{1}{z_\alpha^\prime}\right]\left(z_\alpha\left(\frac{\gamma z_{t\alpha}}{z_\alpha}\right)_\alpha\right)\\
+\left[H,\frac{1}{z_\alpha^\prime}\right]\left((z_\alpha-z_\alpha^\prime)\left(\frac{\gamma z_{t\alpha}^\prime}{z_\alpha^\prime}\right)_\alpha\right)
+\left[H,\frac{1}{z_\alpha^\prime}\right]\left(z_\alpha^\prime\left(\frac{(\gamma-\gamma^\prime) z_{t\alpha}^\prime}{z_\alpha^\prime}\right)_\alpha\right)
\\+\left[H,\frac{1}{z_\alpha^\prime}\right]\left(z_\alpha^\prime\left(\gamma^\prime(\frac{ z_{t\alpha}}{z_\alpha}-\frac{ z_{t\alpha}^\prime}{z_\alpha^\prime})\right)_\alpha\right).
\end{multline*}
The first term  and the second term on the right-hand side are bounded in $H^{2}$ by $CE_d^{1/2}$ since $\|z_\alpha-z_\alpha^\prime\|_{3/2}$ is bounded by  $CE_d^{1/2}$ and $z_\alpha\left(\frac{\gamma z_{t\alpha}}{z_\alpha}\right)_\alpha$ is bounded in $H^{s-3}$.
The third term on the right-hand side is bounded in $H^{3/2}$ by $CE_d^{1/2}$ since by Lemma \ref{lemma7},
\[
\left\|\left[H,\frac{1}{z_\alpha^\prime}\right]\left(z_\alpha^\prime\left(\frac{(\gamma-\gamma^\prime) z_{t\alpha}^\prime}{z_\alpha^\prime}\right)_\alpha\right)\right\|_{3/2}\le C
\left\|\frac{1}{z_\alpha^\prime}\right\|_{3/2}\left\|\left(z_\alpha^\prime\left(\frac{(\gamma-\gamma^\prime) z_{t\alpha}^\prime}{z_\alpha^\prime}\right)_\alpha\right)\right\|_0.
\]
The fourth term on the right-hand side  is bounded in $H^2$ by $CE_d^{1/2},$ since by Lemma \ref{lemma7},
\[{}
\left\|\left[H,\frac{1}{z_\alpha^\prime}\right]\left(z_\alpha^\prime\left(\gamma^\prime(\frac{ z_{t\alpha}}{z_\alpha}-\frac{ z_{t\alpha}^\prime}{z_\alpha^\prime})\right)_\alpha\right)\right\|_{3/2}
\le\left\|\frac{1}{z_\alpha^\prime}\right\|_{4}\left\|\left(z_\alpha^\prime\left(\gamma^\prime(\frac{ z_{t\alpha}}{z_\alpha}-\frac{ z_{t\alpha}^\prime}{z_\alpha^\prime})\right)_\alpha\right)\right\|_{-1}.
\]
We omit the remaining details for our estimate of $B_2$ since they are similar to the above.
We now make the conclusion that
\begin{align}\nonumber
\|B_2\|_{3/2}\le  CE_d^{1/2}+C(|\rho_0-\rho_0^\prime|+|\sigma-\sigma^\prime|).
\end{align}
Finally, $\widetilde{R}_1$ contains as
the highest derivative terms $\gamma_{\alpha}$ and $\theta_{\alpha}.$
So the inequality \eqref{R1Lip} follows from Lemma \ref{lemma2} and all previous estimates.
\end{proof}

Now we prove Theorem \ref{Cauchytheorem}.
\begin{proof}
We take the time derivative of $Z_0$:
\[
\frac{dZ_0}{dt}=\delta L\delta L_t+\int_0^{2\pi}\delta\theta\delta\theta_td\alpha+\int_0^{2\pi}\delta\gamma\delta\gamma_td\alpha
\]
By Lemma \ref{Lippriori}, we could  conclude that
\begin{equation}\nonumber
\|\delta\theta_t\|_0\le CE_d^{1/2} \text{and}~ |\delta L_t|\le CE_d^{1/2}.
\end{equation}
For the estimate of $\int_0^{2\pi}\delta\gamma\delta\gamma_td\alpha$, we must again understand the regularity of $\gamma_{t}.$
First, using the evolution equation \eqref{gammaend2}, we write $\gamma_t$ as
\begin{align}\label{inversegamma}
\gamma_t=\left(I+\left(I+\frac{2\pi}{L}\rho\widetilde{A}\Lambda\right)^{-1}T[\theta]\right)^{-1}\left(I+\frac{2\pi}{L}\rho\widetilde{A}\Lambda\right)^{-1}F.
\end{align}
Since $\theta\in H^s,\gamma\in H^{s-3/2},\tilde{R}_1\in H^{s-2},\tilde{R}_2\in H^{s-3/2}$ and $R\in H^{s-1}$, we have $F\in H^{s-4}$, where $F$ is defined by the formula \eqref{FDef}.
Per our discussion on solvability and Assumption \ref{solvabilityAssumption}, we see that $\gamma_t\in H^0$ when $F\in H^{0}$ for sufficiently large $s$.
Next, using the estimate \eqref{TDef}, we can demonstrate that $T[\theta]\gamma_t\in H^{s-2}$.
Since $\gamma_t=\left(I+\frac{2\pi}{L}\rho\widetilde{A}\Lambda\right)^{-1}\left(-T[\theta]\gamma_t+F\right)$ by  the evolution equation \eqref{gammaend2}, we see that $\gamma_t\in H^{s-4}$.

Now we focus on the  Lipschitz estimate for $T[\theta]$.
Since we can decompose $T[\theta]$ as commutators and terms involving $K[z_d]$, by inequalities \eqref{Lip1} and \eqref{Lip2} we can claim, for any $f\in H^2,$
\begin{equation}\label{LpT}
\|T[\theta]f-T[\theta^\prime]f\|_1\le C E_d^{1/2}\|f\|_2+C|\rho_0-\rho_0^\prime|.
\end{equation}
The proof of \eqref{LpT} is analogous to the proof of Lemma \ref{Tbound} by adding and subtracting several times, so we only give one interesting term as an example:
\begin{multline*}
\left\|K[z_d]\left(\left(\frac{f}{z_\alpha}\right)_{\alpha }\right)-K[z^\prime_d]\left(\left(\frac{f}{z^\prime_\alpha}
\right)_{\alpha }\right)\right\|_1
\le \left\|K[z_d]\left(\left(\frac{f}{z_\alpha}\right)_{\alpha }\right)
-K[z^\prime_d]\left(\left(\frac{f}{z_\alpha}\right)_{\alpha }\right)\right\|_1   \\
+
\left\|K[z^\prime_d]\left(\left(\frac{f}{z_\alpha}\right)_{\alpha }-\left(\frac{f}{z^\prime_\alpha}\right)_{\alpha }\right)\right\|_1\\
\le C\|\theta-\theta^\prime\|_1\left\|\left(\frac{f}{z_\alpha}\right)_{\alpha}\right\|_1
+C\|z_d\|_3\left\|\left(\frac{f}{z_\alpha}\right)_{\alpha }-\left(\frac{f}{z^\prime_\alpha}\right)_{\alpha }\right\|_0
\le C E_d^{1/2}\|f\|_2.
\end{multline*}
Using the evolution equation \eqref{inversegamma}, we subtract to obtain the following equation:
\begin{align*}
\gamma_t-\gamma_t^\prime&=D_1^{-1}D_2^{-1}F-(D_1^\prime)^{-1}(D_2^\prime)^{-1}F^\prime=
(D_1^{-1}-(D_1^\prime)^{-1})(D_1)^{-1}F\\&+(D_1^\prime)^{-1}(D_2^{-1}-(D_2^\prime)^{-1})F+(D_1^\prime)^{-1}(D_2^\prime)^{-1}(F-F^\prime),
\end{align*}
where $D_1=I+\left(I+\frac{2\pi\rho\widetilde{A}\Lambda}{L}\right)^{-1}T[\theta],$ $D_2=I+\frac{2\pi\rho\widetilde{A}\Lambda}{L}$. We are ready to estimate $\int_0^{2\pi}\delta\gamma\delta\gamma_td\alpha.$
\begin{multline}\label{dgammat}
\int_0^{2\pi}\delta\gamma\delta\gamma_td\alpha=\int_0^{2\pi}(D_1^\prime)^{-1}(D_2^\prime)^{-1}(F-F^\prime)\delta\gamma d\alpha\\
+\int_0^{2\pi}\left((D_1^{-1}-(D_1^\prime)^{-1})D_1^{-1}F+(D_1^\prime)^{-1}(D_2^{-1}-(D_2^\prime)^{-1})F
\right)\delta\gamma d\alpha.
\end{multline}

To estimate the first term on the right-hand side of \eqref{dgammat},  we ignore the lower-order derivative terms of $F-F^\prime$ , and focus on the leading term,
\begin{multline*}
-\sigma \bar{A}\int_0^{2\pi}\delta\gamma (D_1^\prime)^{-1}(D_2^\prime)^{-1}\partial_\alpha^4\delta \theta d\alpha\\
\le-\sigma \bar{A}\int_0^{2\pi}D_1^\prime\delta\gamma (D_2^\prime)^{-1}\partial_\alpha^4\delta \theta d\alpha
\le\sigma \bar{A}\int_0^{2\pi}\partial_\alpha (D_1^\prime\delta\gamma)(D_2^\prime)^{-1}\partial_\alpha^3\delta \theta d\alpha.
\end{multline*}
Using the fact that
 $D_1^\prime$ is uniformly bounded from $H^1$ to $H^0$and $(D_2^\prime)^{-1}$ is uniformly bounded from $H^0$ to $H^0$, we obtain
\begin{multline*}
\int_0^{2\pi}(D_1^\prime)^{-1}(D_2^\prime)^{-1}(F-F^\prime)\delta\gamma d\alpha\le CE_d+C(|\rho_0-\rho_0^\prime|+|\sigma-\sigma^\prime|)E_d^{1/2}.
\end{multline*}
To estimate the second and third terms on the right-hand side of \eqref{dgammat}, we use the facts that
\begin{align*}
D_1^{-1}-(D_1^\prime)^{-1}=D_1^{-1}(D_1^\prime-D_1)(D_1^\prime)^{-1},\quad
D_2^{-1}-(D_2^\prime)^{-1}=D_2^{-1}(D_2^\prime-D_2)(D_2^\prime)^{-1}.
\end{align*}
Moreover, we have
\begin{align*}
D_1-D_1^\prime&=\left(I+\frac{2\pi\rho\widetilde{A}\Lambda}{L}\right)^{-1}(T[\theta]-T[\theta^\prime])\\&+2\pi\widetilde{A}\left(I+\frac{2\pi\rho\widetilde{A}\Lambda}{L}\right)^{-1}\left(\frac{\rho}{L}-\frac{\rho^\prime}{L^\prime}\right)\Lambda\left(I+\frac{2\pi\rho\widetilde{A}\Lambda}{L}\right)^{-1}T[\theta^\prime],
\end{align*}
and
\begin{align}\nonumber
D_2-D_2^\prime=2\pi\widetilde{A}\left(\frac{\rho}{L}-\frac{\rho^\prime}{L^\prime}\right)\Lambda.
\end{align}
Using these formulas and the fact that each of $D_2^{-1}$ and $(D_2^\prime)^{-1}$ are uniformly bounded from $H^0$ to $H^0$,
we can conclude that
\begin{equation}\label{dz0-2}
\frac{dZ_0}{dt}\le CE_d+C(|\rho_0-\rho_0^\prime|+|\sigma-\sigma^\prime|)E_d^{1/2}.
\end{equation}

Now, we will take the time derivative of $\sigma Z_1:$
\begin{align*}
\frac{d(\sigma Z_1)}{dt}=\frac{(L^2 \sigma\bar{A})_t}{4\pi^2 }\int_0^{2\pi}(\partial_\alpha^3\delta\theta)^2 d\alpha+\frac{L^2 \sigma\bar{A}}{2\pi^2 }\int_0^{2\pi}(\partial_\alpha^3\delta\theta)
(\partial_\alpha^3\delta\theta_t) d\alpha.
\end{align*}
We apply $\partial_\alpha^3$ to \eqref{evolution of deltatheta}, finding
\begin{equation}\nonumber
\partial_\alpha^3\delta\theta_t=\frac{2\pi^2}{L^2}H(\partial_\alpha^4\delta\gamma)
+\frac{2\pi}{L}V_W^\prime\partial_\alpha^4\delta\theta+\frac{2\pi}{L}\theta_\alpha\partial_\alpha^3\delta V_W+Y_1,
\end{equation}
where $Y_1$ is given by
\begin{equation*}
Y_1=\partial_\alpha^3 B_1+\frac{2\pi}{L}\left(\partial_\alpha^3(V_W^\prime\delta\theta_\alpha)-V_W^\prime\partial_\alpha^4\delta\theta\right)
+\frac{2\pi}{L}\left(\partial_\alpha^3(\theta_\alpha\delta V_W)-\theta_\alpha\partial_\alpha^3\delta V_W\right).
\end{equation*}
Therefore we have
\begin{multline}\label{dz1}
\frac{d(\sigma Z_1)}{dt}
=\frac{(L^2 \sigma\bar{A})_t}{4\pi^2 }\int_0^{2\pi}(\partial_\alpha^3\delta\theta)^2 d\alpha
+\sigma\bar{A}\int_0^{2\pi}(\partial_\alpha^3\delta\theta)
\left(H\partial_\alpha^4\delta\gamma\right) d\alpha
\\+ \frac{L \sigma\bar{A}}{\pi}\int_0^{2\pi}(\partial_\alpha^3\delta\theta)
\left(V_W^\prime \partial_\alpha^4\delta\theta\right) d\alpha+ \frac{L \sigma\bar{A}}{\pi}\int_0^{2\pi}\theta_\alpha(\partial_\alpha^3\delta\theta)
\left(\partial_\alpha^3\delta V_W \right) d\alpha
\\+\frac{L^2 \sigma\bar{A}}{2\pi^2 }\int_0^{2\pi}(\partial_\alpha^3\delta\theta)
(Y_1) d\alpha.
\end{multline}
The third term of the right-hand side of \eqref{dz1} may be bounded in terms of $E_{d}$ upon integrating by parts. To deal with the fourth term and fifth term of the 
right-hand side of \eqref{dz1},
we consider the leading term from $\partial_{\alpha}^{3}V_{W}$,
\begin{equation}\nonumber
\partial_\alpha^{3}\delta V_W=\frac{\pi}{L}\theta_\alpha H(\partial_{\alpha}^{2}\delta\gamma)+Y_{2},
\end{equation}
with remainder $Y_2$  given by
\begin{multline}\nonumber
Y_2=\partial_\alpha^2(\frac{\pi}{L}H(\gamma^\prime \theta_\alpha) -\frac{\pi}{L^\prime}H( \gamma^\prime\theta_\alpha^\prime)+\mathds{P}(\bm{m}\cdot\bm{t}-\mathds{P}(\bm{m}^\prime\cdot\bm{t}^\prime+[H,\theta_\alpha]\delta\gamma)\\
+\frac{\pi}{L}\left(\partial_\alpha^3\theta H(\delta\gamma)+2\partial_\alpha^2\theta  H(\partial_\alpha \delta \gamma )\right).
\end{multline}
 Moreover, we have the estimate
\[
 \frac{L \sigma\bar{A}}{\pi}\int_0^{2\pi}(\partial_\alpha^{3}\delta\theta)
\left( \theta_\alpha Y_2\right) d\alpha
+\frac{L^2 \sigma\bar{A}}{2\pi^2 }\int_0^{2\pi}(\partial_\alpha^3\delta\theta)
(Y_1) d\alpha \le CE_{d}.
\]
We now conclude that
\begin{align}\label{dz1-2}
\frac{dZ_1}{dt}
\le \sigma\bar{A}\int_0^{2\pi}(\partial_\alpha^3\delta\theta)
\left(H\partial_\alpha^4\delta\gamma\right) d\alpha
+ \sigma\bar{A}\int_0^{2\pi}(\partial_\alpha^3\delta\theta)
(H\partial_\alpha^2\delta \gamma )\theta_\alpha^2 d\alpha
+CE_d.
\end{align}
Here, the two terms which need to be treated carefully on the right-hand side come from the second term on the right-hand side of \eqref{dz1},
and the leading-order contribution from the fourth term on the right-hand side of \eqref{dz1}. Adopting a similar approach as we did for  $\frac{dZ_1}{dt},$  we can calculate and estimate $\frac{dZ_3}{dt}:$
\begin{align}
\frac{dZ_3}{dt}\label{dz3-2}
\le \lambda\int_0^{2\pi}(\partial_\alpha^2\delta\theta)
\left(H\partial_\alpha^3\delta\gamma\right) d\alpha- \frac{\sigma \bar{A}}{2}\int_0^{2\pi}(\partial_\alpha^3\delta\theta)
\left(H\partial_\alpha^2\delta\gamma\right) d\alpha+CE_d.
\end{align}
Before we compute $ \frac{d Z_2}{dt},$ we apply $\partial_{\alpha}$ to \eqref{deltagammaend}, finding
\begin{multline}\nonumber
\partial_\alpha\delta\gamma_t=\lambda\partial_\alpha^3\delta\theta-\sigma\bar{A} \partial_\alpha^5\delta\theta -\frac{3\sigma\bar{A}\theta_\alpha^2}{2}\partial_\alpha^3\delta\theta +\left(\frac{2\pi V_W}{L}-\frac{4A\pi^2}{L^2}\gamma\right)\delta\gamma_{\alpha\alpha}+\frac{4\pi\rho\widetilde{A} V_W ^2}{L}\partial_\alpha^3\delta\theta\\
-\frac{2\rho\widetilde{A}\pi}{L}H(\partial_\alpha^2\delta\gamma_{ t})
-\frac{4\rho\widetilde{A}V_W\pi^2}{L^2} H(\partial_\alpha^3\delta\gamma)+\partial_\alpha (T[\theta]\delta\gamma_t)+\rho\partial_\alpha(\widetilde{R}_1-\widetilde{R}_1^\prime)\\+\partial_\alpha B_2 +\rho\partial_\alpha \tilde{B}_2 -\frac{3\sigma\bar{A}(\theta_\alpha^2)_\alpha\partial_\alpha^2\delta\theta }{2} +\partial_\alpha\left(\frac{2\pi V_W}{L}-\frac{4A\pi^2}{L^2}\gamma\right)\delta\gamma_{\alpha}++\frac{4\pi\rho\widetilde{A} (V_W ^2)_\alpha}{L}\partial_\alpha^2\delta\theta.
\end{multline}
We now take the time derivative of $Z_2:$
 \[
 \frac{dZ_2}{dt}=\int_0^{2\pi}(\partial_\alpha \delta\gamma_t)( H\partial_\alpha^{2} \delta\gamma) d\alpha.
 \]
The estimate of $\frac{dZ_2}{dt}$ is analogous to the estimate of $\frac{dE_2}{dt}$ using the skew-adjointedness property of the Hilbert transform,  commutators, and integration by parts.  We therefore only give the leading terms of $dZ_2/dt:$
\begin{multline}\label{dz2-2}
\frac{dZ_2}{dt}\le-\frac{2\rho\widetilde{A}\pi}{L}\int_0^{2\pi}H(\partial_\alpha^2\delta\gamma_{ t})( H\partial_\alpha^{2} \delta\gamma) d\alpha-\sigma\bar{A}\int_0^{2\pi}\partial_\alpha^5\delta\theta( H\partial_\alpha^{2} \delta\gamma) d\alpha\\
+\int_0^{2\pi}\left(\lambda-\frac{3\sigma\bar{A}\theta_\alpha^2}{2}+\frac{4\pi\rho\tilde{A}V_W^2}{L}\right)\partial_\alpha^3\delta\theta( H\partial_\alpha^{2} \delta\gamma) d\alpha+CE_d+C|\rho_0-\rho_0^\prime|E_d^{1/2}\\
\le-\frac{d(\rho Z_4)}{dt}
-\sigma\bar{A}\int_0^{2\pi}\partial_\alpha^3\delta\theta( H\partial_\alpha^{4} \delta\gamma) d\alpha
+C E_d+C(|\rho_0-\rho_0^\prime|+|\sigma-\sigma^\prime|)E_d^{1/2}\\
+\int_0^{2\pi}\left(\lambda-\frac{3\sigma\bar{A}\theta_\alpha^2}{2}+\frac{4\pi\rho\tilde{A}V_W^2}{L}\right)\partial_\alpha^3\delta\theta( H\partial_\alpha^{2} \delta\gamma) d\alpha.
\end{multline}
Adding \eqref{dz0-2}, \eqref{dz1-2}, \eqref{dz3-2}, and \eqref{dz2-2}, we conclude that
\begin{multline}\label{zdend}
\frac{dZ_d}{dt}\le  \int_0^{2\pi}\left(-\frac{\sigma\bar{A}(\theta_\alpha^2+1)}{2}+\frac{4\pi\rho\tilde{A}V_W^2}{L}\right)\partial_\alpha^3\delta\theta H\partial_\alpha^{2} \delta\gamma d\alpha
\\
+C E_d +C(|\rho_0-\rho_0^\prime|+|\sigma-\sigma^\prime|)E_d^{1/2}.
\end{multline}
To cancel the first term on the right-hand side of the above inequality, we introduce additional energy terms:
\begin{align}\nonumber
Z_5=\frac12\int_0^{2\pi}\sqrt{(\theta_\alpha^2+1)/2}(\delta \gamma)\Lambda\left(\sqrt{(\theta_\alpha^2+1)/2}(\delta\gamma)\right)d\alpha,
\end{align}
\begin{align}\nonumber
Z_6=\int_0^{2\pi}\frac{(\theta_\alpha^2+1)\widetilde{A}\pi}{2L}(H\partial_\alpha\delta\gamma)^2d\alpha,
\end{align}
\begin{align}\nonumber
Z_7=\frac{ L\tilde{A}}{\pi}\int_0^{2\pi}V_W^2(\partial_\alpha^{2}\delta\theta)^2d\alpha.
\end{align}
We omit most remaining details but give the leading terms for $dZ_5/dt,$
\begin{multline*}\label{dZ42}
\frac{dZ_5}{dt}\le \int_0^{2\pi}\frac{(\theta_\alpha^2+1)}{2}(\delta\gamma_t)(H\partial_\alpha\delta\gamma)d\alpha+C E_d +C(|\rho_0-\rho_0^\prime|+|\sigma-\sigma^\prime|)E_d^{1/2}\\
\le
-\int_0^{2\pi}\frac{(\theta_\alpha^2+1)\rho\widetilde{A}\pi}{L}H(\partial_\alpha\delta\gamma_t)(H\partial_\alpha\delta\gamma)d\alpha\\
+\int_0^{2\pi}\frac{\sigma\bar{A}(\theta_\alpha^2+1)}{2}(\partial_\alpha^{3}\delta\theta)(H\partial_\alpha^{2}\delta\gamma)d\alpha+C E_d+C(|\rho_0-\rho_0^\prime|+|\sigma-\sigma^\prime|)E_d^{1/2}\\
\le -\frac{d(\rho Z_6)}{dt}+\int_0^{2\pi}\frac{\sigma\bar{A}(\theta_\alpha^2+1)}{2}(\partial_\alpha^{3}\delta\theta)(H\partial_\alpha^{2}\delta\gamma)d\alpha\\
+C E_d +C(|\rho_0-\rho_0^\prime|+|\sigma-\sigma^\prime|)E_d^{1/2},
\end{multline*}
and $d(\rho Z_7)/dt,$
\begin{align}
\frac{d(\rho E_7)}{dt}&=\int_0^{2\pi} \left(\frac{2L\rho\tilde{A}V_W^2}{\pi}\right)_t\left(\partial_\alpha^{2} \delta\theta\right)\partial_\alpha^{2}\delta\theta d\alpha+\int_0^{2\pi} \frac{2L\rho\tilde{A}V_W^2}{\pi}\left(\partial_\alpha^{2}\delta \theta\right)\partial_\alpha^{2}\delta\theta_t d\alpha \nonumber\\
&\le \int_0^{2\pi} \frac{4\pi\rho\tilde{A}V_W^2}{L}\left(\partial_\alpha^{2}\delta \theta\right)(H\partial_\alpha^{3}\delta\gamma) d\alpha+CE_d\nonumber\\
&\le-\int_0^{2\pi} \frac{4\pi\rho\tilde{A}V_W^2}{L}\left(\partial_\alpha^{3} \delta\theta\right)(H\partial_\alpha^{2}\delta\gamma) d\alpha+CE_d,
\nonumber
\end{align}
which provides a cancellation with the first term on the right-hand side of \eqref{zdend}.

We define $E_d^{total}$ as
\begin{align*}\nonumber
E_d^{total}&=Z_0+\sigma Z_1+Z_2+Z_3+\rho Z_4+Z_5+\rho Z_6+\rho Z_7\\
&=Z_0+\sigma Z_1+Z_2+Z_3++2\pi\rho_0 Z_4/L+Z_5+2\pi\rho_0Z_6/L+\rho_0 Z_7/L.
\end{align*}
Then we have the following estimate:
\begin{align}\nonumber
\frac{dE_d^{total}}{dt}\le C_1E_d^{total}+C_2(|\rho_0-\rho_0^\prime|+|\sigma-\sigma^\prime|)(E_d^{total})^{1/2},
\end{align}
where $C_1$ and $C_2$ are independent of $\sigma$ and $\rho_{0}.$
Solving the differential inequality, we see that
\begin{equation}\nonumber
E_d^{total}\le E_d^{total}(0)e^{C_1 t}+C_2(|\rho_0-\rho_0^\prime|+|\sigma-\sigma^\prime|)(e^{C_1 t}-1)/C_1.
\end{equation}
We know that the solutions start with the same initial conditions, so that $ E_d^{total}(0)=0.$  This completes the proof.
\end{proof}

From the paper \cite{ambroseThesis}, we know that the two-dimensional vortex sheet with surface tension is well-posed. That is, for
the system \eqref{evolution of theta},
\eqref{gammaend00}, when $(\sigma,\rho_0)=(0,0)$, there exists a bounded solution $(\theta,\gamma)\in C^0([0,T^\prime];\overline{\mathcal{O}}).$
Now we will prove that this two-dimensional vortex sheet with surface tension
is the limit of the two-dimensional hydroelastic wave (with surface tension), as the density of the elastic sheet and bending modulus vanish.
This is done while assuming that Assumption \ref{solvabilityAssumption} holds.   This is the content of Theorem \ref{limitTheorem}; before stating the theorem,
we introduce some brief notation.
First, we write the evolution of $\gamma$ as
\begin{equation}\label{gammaend3}
\gamma_t=\left(I+\frac{2\pi}{L}\rho\widetilde{A}\Lambda\right)^{-1}(-T[\theta](\gamma_t)+F)=B_1^{\sigma,\rho_0}
\end{equation}
and the evolution of $\theta$
\begin{equation}\nonumber
\theta_t=B_2^{\sigma,\rho_0}.
\end{equation}
\begin{theorem}\label{limitTheorem}
Let $(\theta_0,\gamma_0)\in \mathcal{O}$ be given, with $\llangle\sin(\theta_0)\rrangle=0$. Assume that Assumption \ref{solvabilityAssumption} holds.
Let $T>0$ be the value guaranteed to exist (independent of small $\sigma$ and $\rho_{0}$) from Theorem \ref{uniform_time}.  
Let   $(\theta^{\sigma,\rho_0},\gamma^{\sigma,\rho_0}) \in C([0,T],\mathcal{O})$ be the solution of Cauchy problem \eqref{evolution of theta}, \eqref{gammaend} 
with initial conditions  $(\theta_0,\gamma_0).$   Let $(\theta,\gamma)\in C([0,T];\bar{\mathcal{O}})$ be the corresponding solution of the initial value problem
for $(\sigma,\rho_{0})=(0,0).$
For any $s^\prime$ such that $0\le s^\prime<s$, we have
\[
\lim_{(\sigma,\rho_0)\rightarrow(0,0)}\sup_{t\in[0,T]}\|(\theta^{\sigma,\rho_0},\gamma^{\sigma,\rho_0})-(\theta,\gamma)\|_{H^{s^\prime-1}\times H^{s^\prime-3/2}}=0.
\]
\end{theorem}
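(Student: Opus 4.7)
The plan is to combine the Cauchy property of Theorem \ref{Cauchytheorem} with the uniform high-regularity bounds of Theorem \ref{uniform_time}, and then upgrade the resulting convergence by Sobolev interpolation. I would first fix any sequence $(\sigma_n, \rho_{0,n}) \to (0,0)$; Theorem \ref{Cauchytheorem} shows that $\{(\theta^{\sigma_n,\rho_{0,n}}, \gamma^{\sigma_n,\rho_{0,n}})\}$ is Cauchy in $C([0,T]; H^2 \times H^{3/2})$, so it converges to a limit $(\theta^\ast, \gamma^\ast)$ in that space. The uniform bound of Theorem \ref{uniform_time} places $\{\theta^{\sigma_n,\rho_{0,n}}\}$ in a fixed ball of $L^\infty([0,T]; H^{s-1})$ and $\{\gamma^{\sigma_n,\rho_{0,n}}\}$ in a fixed ball of $L^\infty([0,T]; H^{s-3/2})$; weak-$\ast$ lower semicontinuity then gives $(\theta^\ast, \gamma^\ast) \in L^\infty([0,T]; H^{s-1} \times H^{s-3/2}) \subset \bar{\mathcal{O}}$.

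Next I would identify $(\theta^\ast, \gamma^\ast)$ with the vortex sheet with surface tension solution $(\theta, \gamma)$. The initial data agree by construction. Passing to the limit in the evolution equations \eqref{evolution of theta} and \eqref{gammaend}: every term explicitly weighted by $\sigma$ or $\rho$ vanishes, since its prefactor tends to zero while the associated higher-derivative factor of $\theta^{\sigma_n,\rho_{0,n}}$ or $\gamma^{\sigma_n,\rho_{0,n}}$ stays uniformly bounded by Theorem \ref{uniform_time}; the remaining nonlinear and singular-integral terms converge in $H^0$ thanks to the Lipschitz estimates of Lemmas \ref{lemma6}, \ref{Tbound}, and \ref{Lippriori}, which control $K[z_d]$, the Hilbert commutators, $J[z_d]$, $S[z_d]$, and $T[\theta]$ as functions of $\theta$ in the $H^2$ topology. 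This shows that $(\theta^\ast, \gamma^\ast)$ solves the vortex sheet with surface tension Cauchy problem \eqref{evolution of theta}, \eqref{gammaend00} with initial data $(\theta_0, \gamma_0)$, and the uniqueness result of \cite{ambroseThesis} forces $(\theta^\ast, \gamma^\ast) = (\theta, \gamma)$.

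Finally I would upgrade the strong convergence from $H^2 \times H^{3/2}$ to $H^{s'-1} \times H^{s'-3/2}$ for any $s' \in [0, s)$. For $s' \le 3$ the embeddings $H^2 \hookrightarrow H^{s'-1}$ and $H^{3/2} \hookrightarrow H^{s'-3/2}$ give the conclusion immediately from the Cauchy convergence. For $3 < s' < s$, Lemma \ref{lemma1} yields
\begin{equation*}
\|\theta^{\sigma_n,\rho_{0,n}} - \theta\|_{s'-1} \le C \|\theta^{\sigma_n,\rho_{0,n}} - \theta\|_{s-1}^{(s'-1)/(s-1)} \|\theta^{\sigma_n,\rho_{0,n}} - \theta\|_{0}^{1-(s'-1)/(s-1)},
\end{equation*}
and analogously for $\gamma$ with exponents involving $s - 3/2$; the high-norm factor is uniformly bounded in $n$, whereas the $L^2$ factor vanishes because it is dominated by the established $H^2$-convergence.

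The hardest step should be the passage to the limit in the $\gamma_t$ equation, because $\gamma_t$ is defined only implicitly through the integral equation \eqref{gammaend2}, with $\gamma_t$ itself appearing under $H \partial_\alpha$ and $T[\theta]$ on both sides. The crux is Assumption \ref{solvabilityAssumption}, which gives a uniform-in-parameters bound on the inverse of $I + \tfrac{2\rho\widetilde{A}}{L} H \partial_\alpha + T[\theta]$; combining this uniform invertibility with the Lipschitz estimate \eqref{LpT} on $T[\theta]$ established inside the proof of Theorem \ref{Cauchytheorem}, one can pass to the limit in the representation \eqref{inversegamma} for $\gamma_t$ and recover the corresponding identity $\gamma^\ast_t = (I + T[\theta^\ast])^{-1} F^\ast$ satisfied by the vortex sheet with surface tension.
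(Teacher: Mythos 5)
Your proposal is correct and follows essentially the same route as the paper: use Theorem \ref{Cauchytheorem} to get Cauchy convergence in the low norm, use the uniform high-norm bounds of Theorem \ref{uniform_time} together with Lemma \ref{lemma1} to interpolate up to any $s'<s$, and identify the limit by passing to the limit in the (time-integrated) evolution equations with the help of Assumption \ref{solvabilityAssumption} and the Lipschitz estimates on $T[\theta]$. The only cosmetic differences are that you spell out the weak-$\ast$ argument for placing the limit in $\bar{\mathcal{O}}$ and explicitly invoke uniqueness from \cite{ambroseThesis}, steps the paper leaves implicit.
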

\begin{proof}
From Theorem \ref{Cauchytheorem}, we see that $(\theta^{\sigma,\rho_0},\gamma^{\sigma,\rho_0})$ is a Cauchy sequence in $H^2\times H^{3/2}$. 
Since the solutions $(\theta^{\sigma,\rho_0},\gamma^{\sigma,\rho_0})$ are uniformly bounded in $H^{s-1}\times H^{s-3/2}$  regardless of $(\sigma,\rho_0)$, 
by the Sobolev interpolation inequality, it implies that the sequence   $(\theta^{\sigma,\rho_0},\gamma^{\sigma,\rho_0})$ is a Cauchy sequence in 
$H^{s^\prime-1}\times H^{s^\prime-3/2}$. Therefore, there exists a limit as $(\sigma,\rho_0)\rightarrow(0,0)$.

Now we will prove that the limit is $(\theta,\gamma),$ the solution of the Cauchy problem for the vortex sheet with surface tension. 
Since $s$ is sufficiently large we have 
from  \eqref{gammaend3} that ${B}_1^{\sigma,\rho_0}$ converges uniformly to $B_1^{(\sigma,\rho_0)=(0,0)}$. We integrate \eqref{gammaend3} in time,
\[
\gamma^{\sigma,\rho_0}(\cdot,t)=\gamma_0+\int_0^t{B}_1^{\sigma,\rho_0}(\cdot,s)ds,
\]
and pass to the limit as $(\sigma,\rho_0)\rightarrow (0^+,0^+).$ Using the uniform convergence ${B}_1^{\sigma,\rho_0}$ to exchange the 
limit and the integral, we have
\[
\gamma(\cdot,t)=\gamma_0+\int_0^t\mathcal{B}_1^{(\sigma,\rho_0)=(0,0)}(\cdot,s)ds.
\]
Similarly, again since $s$ is taken sufficiently large, 
${B}_2^{\sigma,\rho_0}$ converges uniformly to $B_2^{(\sigma,\rho_0)=(0,0)}.$ 
This is again enough to conclude $\theta^{\sigma,\rho_0}$ converges uniformly to $\theta.$
This implies that $(\theta,\gamma)$ solves the Cauchy problem of the vortex sheet with surface tension.
\end{proof}

\section*{Acknowledgments}
The first author was supported in part by  the National Natural Science Foundation of China (Grant No.12001187) and the Hunan Provincial Educational Foundation of China (Grant No.23B0569). DMA is grateful to the National Science Foundation for support through grant DMS-2307638.

\bibliography{hydroelastic-with-mass}{}
\bibliographystyle{plain}

\end{document}